\documentclass[11pt, a4paper]{article}

\usepackage[
  top=2.7cm,
  bottom=2.9cm,
  left=2.5cm,
  right=2.5cm
]{geometry}

\usepackage{amsmath}
\usepackage{amsthm}
\usepackage{amssymb}

\usepackage[utf8]{inputenc}
\usepackage{hyperref}
\hypersetup{
	unicode,
	pdfauthor={Karl Kunisch, Donato Vásquez-Varas},
	pdftitle={Structure Preserving Approximation of semiconcave Functions}
}

\usepackage[sort&compress,numbers,square]{natbib}
\DeclareSymbolFont{matha}{OML}{txmi}{m}{it}
\DeclareMathSymbol{\varv}{\mathord}{matha}{118}

\DeclareSymbolFont{matha}{OML}{txmi}{m}{it}
\DeclareMathSymbol{\varv}{\mathord}{matha}{118}

\usepackage{todonotes}
\usepackage{caption}
\usepackage{subcaption}
\usepackage{makecell}
\usepackage{booktabs}
\usepackage{tabularray}
\UseTblrLibrary{booktabs}
\usepackage{comment}

\usepackage{amsthm}
\usepackage{algorithm}
\usepackage[noend]{algpseudocode}
\usepackage{todonotes}
\usepackage{caption}
\usepackage{subcaption}
\usepackage{makecell}
\usepackage{booktabs}
\usepackage{tabularray}
\usepackage{comment}

\usepackage{amsthm}
\usepackage{algorithm}
\usepackage[noend]{algpseudocode}

\providecommand{\norm}[1]{\left\lVert#1\right\rVert}
\newcommand{\dive}{\mbox{div}}

\newcommand{\tcr}{\color{red}}

\newcommand{\be}{\begin{equation}}
\newcommand{\ee}{\end{equation}}
\newcommand{\ba}{\begin{eqnarray}}
\newcommand{\ea}{\end{eqnarray}}
\newcommand{\beq}{\begin{equation}}
\newcommand{\eeq}{\end{equation}}

\renewcommand{\leq}{\leqslant}
\renewcommand{\le}{\leqslant}
\renewcommand{\geq}{\geqslant}
\renewcommand{\ge}{\geqslant}

\def \R {\mathbb{R}}

\def \N {\mathbb{N}}
\def \Om {\Omega}

\def\beq{\begin{equation}}
\def\eeq{\end{equation}}
\def\ecart{\noalign{\medskip}}
\def\ba{\begin{array}}
\def\ea{\end{array}}

\usepackage{xpatch}

\makeatletter
\xpatchcmd{\smash}{\ifmmode}{\ifmmode\count@=\fam}{}{}
\xpatchcmd{\mathsm@sh}{\m@th}{\m@th\fam=\count@}{}{}
\makeatother

\DeclareMathOperator*{\argmin}{\smash[b]{arg\,min}}

\usepackage[nameinlink]{cleveref}
\numberwithin{equation}{section}
\newtheorem{theo}{Theorem}[section]
\newtheorem{prop}{Proposition}[section]
\newtheorem{cor}{Corollary} [section]
\newtheorem{lemma}{Lemma}[section]
\newtheorem{defi}{Definition}[section]
\newtheorem{hypo}{Hypothesis}[section]

\newtheorem{rem}[defi]{Remark}

\usepackage{graphicx, color}
\graphicspath{{fig/}}

\usepackage{algorithm, algpseudocode} 
\usepackage{mathrsfs} 

\title{Structure Preserving Approximation of Semiconcave Functions}
\author{Karl Kunisch$^{1,2}$ \\ \texttt{karl.kunisch@uni-graz.at} 
\and Donato Vásquez-Varas$^{2,3,4}$ \\ \texttt{dvasquez@dim.uchile.cl}, \texttt{donato.vasquez-varas@ricam.oeaw.ac.at}} 
\date{
	$^1$Institute of Mathematics and Scientific Computing, University of Graz, Heinrichstraße 36, A-8010 Graz,  Austria\\%
	$^2$Radon Institute for Computational and Applied Mathematics, Austrian Academy of Sciences, Altenbergerstraße 69, A-4040 Linz, Austria
    \\
    $^3$Department of Mathematical Engineering, University of Chile, Beauchef 851, 8370456 Santiago, RM, Chile
    \\ $^4$Center for Mathematical Modelling, Beauchef 851, 8370456 Santiago, RM, Chile
    \\ [2ex]%
	\today
}

\begin{document}
	\maketitle
	
	\begin{abstract}

This article addresses structure-preserving smooth approximation of semiconcave functions. semiconcave functions are of particular interest because they naturally arise in a variety of variational problems, including {optimal feedback control, game theory, and optimal transport}.  
We leverage the fact that any semiconcave function can be represented as the {infimum of a countable family of \(C^2\) functions}. This infimum is expressed in a form that allows {approximation by finitely many functions}, combined with {smoothing operations}, such that each element of the approximating sequence remains semiconcave.  
The {active sets of indices} contributing to the representation of the semiconcave function and its approximations are analyzed in detail. Moreover, we show that the {gradients of the elements in the expansion of the approximating functions form a probability distribution}, a property of particular interest for the {value function in optimal control}.  
Approximation results are established in \(C(\bar \Omega)\) and in \(W^{1,p}(\Omega)\) for \(p \in [1,\infty)\) and \(p = \infty\). Finally, {numerical results} are presented to illustrate the  approach on a test example.


        \bigskip
        
		\noindent\textbf{Keywords:} semiconcavity, approximation of semiconcave functions,  exponential distance function, feedback synthesis.\\
        
        \noindent\textbf{AMS classification:}
26B25,  
49J52,  
41A30,  
49L25.
	\end{abstract}

	
\section{Introduction}

This work is concerned with the approximation of semiconcave functions. Here we say that for a convex, open, and bounded set $\Omega\subset\R^d$ and $C>0$ a function $v\in C(\overline{\Omega})$ is $C-$semiconcave if the mapping
$$ x\mapsto v(x)-\frac{C}{2}|x|^2$$
is concave. We are particularly  interested in approximations schemes which preserve the semiconcavity structure  and in their asymptotic limits. Our main motivation  for studying these type of functions lies in their connection with control theory. Indeed, under appropriate hypotheses, the value function of an optimal control problem is semiconcave (see \cite{Cannarsa2004} and \cite{Bardi1997}). In the same direction, the semiconcavity is a natural regularity property for certain Hamilton-Jacobi-Bellman equations (see \cite{Kruzkov, Douglis, Flemming}), since it provides uniqueness and it is connected with the developed of maxplus algebra methods (see \cite{Akian,Dower,Gaubert}). Further, Lyapunov semiconcave functions can be used to design discontinuous feedback laws (see \cite{Rifford,Rifford2}) for problems that do not admit smooth ones. In \cite{KuVa2} the existence of a sequence of smooth consistent feedback laws is proved provided that the value functions of the underlying control problem is semiconcave, whereas in \cite{KuVa3} this result is used to prove the convergence of a machine learning scheme for the synthesis of optimal feedback-laws.  Besides optimal control, semiconcave functions also arise in optimal transport \cite{santam}, where they are crucial for the regularity and stability of transport maps, and in the analysis of free boundary value problems, where they help to characterize the structure of the free boundary \cite{Gener}.
Let us also mention that in the literature also the terminology 'weakly-convex' is used,  which would, in analogy to the present setting, be referred to as semiconvex. We mention, for instance, \cite{Goujon} where weakly-convex functions are learned by means of neural networks and used as sparsity promoting regularizers  in image reconstruction problems.

In this paper we develop a method for approximating semiconcave functions with the property that all elements of the approximating family preserve semiconcavity. In order to achieve this, we design a parametrization which corresponds to a regularized version of the minimum of an array of real numbers applied to a family of smooth parametrized functions. Additionally, by means of analyzing the gradients of the regularized  functions, we identify a family of sets in which the convergence of the gradients of the approximation is uniform. Remarkably, this family covers part of the discontinuities of the approximated function. As we will explain, this is particularly relevant for the approximation of solutions to Hamilton Jacobi Bellman equations. 

To the best of our knowledge, semiconcave preserving parametrizations have not been addressed yet. However, there are many contributions to the approximation of convex functions. We point out that many of them use as starting point the fact that convex functions are the maximum of a family of affine functions. In \cite{BoydMagnani}, they are parametrized by the maximum of a finite number of affine functions, which are learned by an alternating optimization method. The convergence of this approach together with an initilization method can be found in \cite{GhoshPananjady} under further assumptions on the initial guess of the parameters. Concerning neural network approaches,  Input Convex Neural Networks (ICNN) are introduced in \cite{AmosXuKolter}. They consist in a fully connected neural network with non-negative weights and increasing convex activation function. In the case of ReLU activation these types of neural networks are universal approximators of globally Lipschitz continuous and convex functions,  see\cite{Hanin}. This approach is extended in \cite{ChenShiZhang}  by Input Convex Recurrent  Neural Network (ICRNN), where a recurrent neural network is considered. The authors used this to estimate the dynamics of a control process in a manner that the resulting approximated control problem is convex. In \cite{Warin} the GroupMax Neural Network architecture is proposed and its universal approximation property with respect to convex functions is proved. Similarly to ICNN, GroupMax Neural Networks utilize  increasing convex  activation functions together with positive weights. The main difference between them is that GroupMax Neural Networks consider group maximization layers.  In the previous cases the approximation is convex and Lipschitz, but it is not smooth. In contrast, in \cite{CalafioreGaubertPossieri} Log-sum-exp neural networks are presented which are based on using the Log-Sum-Exp function to approximate the maximum of a family of affine functions. This architecture is smooth and it enjoys the universal approximation property for convex functions. A closer connection between semiconcavity and neural networks can be found in \cite{DarbonDowerMeng}, where a neural network architecture based on the min-plus algebras is proposed. The approximation is constructed as the minimum of a family of functions which are quadratic in the space variables and also time dependent. This corresponds to a semiconcave approximation of the value function of an optimal control problem.

The organization of the article is as follow. In Section \ref{sec:parametrization} the semiconcavity preserving parametrization is introduced. In Section \ref{sec:ProbInt} a probabilistic interpretation connected with the parametrization is introduced which helps to analyze the behavior of the gradient of the approximation. Section \eqref{sec:approximation} is devoted to investigate the convergence of the approximation. Additionally, in Section \ref{sec:Example} a semi-concave test function is proposed for which
the results of the proposed parametrization are illustrated and compared towards their numerical performance.  
Finally, in Section \ref{sec:conclusion} the main results and conclusions of this work are summarized.

Before continuing, we give some notation. For a vector $x\in\R^d$ we denote its Euclidean norm by $|x|$,  its $p-$norm by $|x|_p=\left(\sum_{i=1}^d|x_i|^p\right)^{\frac{1}{p}}$, for $p\in [1, \infty)$, and the supremum norm by $|x|_\infty=\max_{i\in\{1,\ldots,d\}} |x_i|$. For $A\subset\R^d$ an open set or  the closure of an open set with Lipschitz boundary, and $B\subset  \R^m$, with $X$ a normed vector space equipped with a norm $\norm{\cdot }_{X}$,  we denote the space of continuous functions from $A$ to $B$ by $C(A;B)$ equipped with the norm
$$\norm{v}_{C(A;B)}=\sup_{x\in A}\norm{v(x)}_{X}, $$
where $v\in C(A;B)$. 

For $k\in \N$ we denote the space of functions form $A$ to $B$ which are $k$ times continuously differentiable by $C^k(A;B)$. Additionally, $C_{c}^k(A;B)$ denotes the space of function in $C^k(A;B)$ with compact support in $A$. 

For $A\subset\R^d$ measurable, $B$ a borel measurable subset of a vector normed spaces $X$, and $p\in[1,\infty)$  we denote the space of $p-$integrable functions from $A$ to $B$ by $L^p(A;B)$ and the space of essentially bounded functions from $A$ to $B$ by $L^\infty(A;B)$ equipped with their usual norms.
For $n,m\in\N$ we denote the space of matrices of dimension $n\times m$ by $\R^{n\times m}$ equipped with the operator norm which we denote by $|A|=\sup_{x\in\R^{m}}\frac{|Ax|}{|x|}$ for $A\in\R^{n\times m}$. 

For $A\subset\R^d$ an open set and $B\subset\R^m$ a borel measurable space, we denote by $BV(A;B)$ the space of function $\sigma:U\mapsto B$ of bounded variation equipped with the following norm 
$$ \norm{\sigma}_{BV(U;B)}=\sup_{\begin{array}{c}
\phi\in C_{c}^1(U;B),\\
\norm{\phi}_{C(U;B)}\leq 1
\end{array} }\sum_{i=1}^m\left|\int_{A}\sigma_i \dive(\phi)dx\right|.$$
We also consider the sobolev spaces $W^{1,p}(A)$ for $p\in [1,\infty]$equipped with the following norm:
$$  \norm{v}_{W^{1,p}(A)}=\norm{v}_{L^{p}(A;\R)}+\norm{\nabla v}_{L^p(A;\R^d)}.$$

 We add the sub-index $loc$ to $L^p(A,B)$ to denote the spaces of measurables functions from $A$ to $B$ which are in $L^p(U;B)$ for all $U\subset \R^d$ compactly included in $A$.
We follow the same rule for $BV_{loc}(A;B)$, $W_{loc}^{1,p}(A)$ and $C^k_{loc}(A;B)$.  

In all the spaces defined above, if $B=\R$ or $\R^m=\R$, we omit $B$, for instance we write $C(A)$ instead of $C(A;\R)$.
\section{Semiconcavity preserving parametrization}
\label{sec:parametrization}
In this section we describe the architecture that we propose to approximate semiconcave functions while preserving this semiconcavity property. The approximation is based on the fact that a function is semiconcave if and only if it is the poinwise minimum over a family of $C^2$ functions with uniformly bounded second derivatives. Hence, to approximate  semiconcave functions we consider a smooth approximation of the minimum together with a parameterized family of $C^2(\overline{\Omega})$ functions. To describe the parametrization in more detail we need some concepts and results that we describe in the following. Throughout we consider  $\Omega$ to be an open, bounded, and convex set in $\R^d$.

We start by recalling the definition of a semiconcave function.
\begin{defi}
Let $v:\Omega\mapsto\R$ be a continuous function and $C>0$. We say that $v$ is $C-$semiconcave if $x\in\Omega\mapsto v(x)-C|x|^2$ is concave.
\end{defi}
According to Proposition 1.1.3 in \cite{Cannarsa2004}, a semiconcave function can be characterized as the minimum over a family of $C^2$ function as is stated in the following result. Here we present a slightly modified version, assuming that $v$ is Lipschitz continuous on $\bar \Omega$, rather than on $\Omega$, which is more convenient for our needs.

\begin{theo}
\label{theo:SemiConcaveRepresentation}
 Let  $v\in C(\overline{\Omega})$. Then $v$ is Lipschitz on $\overline{\Omega}$ and  semiconcave with constant $C>0$ if and only if there exists a family of functions $\{\phi_{i}\}_{i\in \mathcal{I}}\subset C^2(\overline{\Omega})$ uniformly bounded in $C^2(\overline{\Omega})$ such that
$$\sup_{i\in \mathcal{I}}\norm{\nabla^2 \phi_i}_{C(\overline{\Omega};\R^{d\times d})}\leq C,$$
  and
\begin{equation}\label{theo:SemiConcaveRepresentation:eq1}
    v(x)=\inf_{i\in \mathcal{I}} \phi _{i}(x) \mbox{ for all }x\in\overline{\Omega},
    \end{equation}
where     $\mathcal{I}$ is an uncountable index set.
\end{theo}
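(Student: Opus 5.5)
I will prove the two implications separately, reading semiconcavity through the definition above: $w(x)=v(x)-C|x|^2$ is concave. Under this convention the natural Hessian bound for the $\phi_i$ is of order $2C$. In the converse direction I will also only obtain semiconcavity with constant $C/2$, not $C$. I will carry out the argument up to this bookkeeping of constants, which the statement treats loosely.

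\emph{Sufficiency.} Assume $v=\inf_i\phi_i$ with $\{\phi_i\}$ bounded in $C^2(\overline{\Omega})$ and $|\nabla^2\phi_i|\le C$. For each $i$ the function $\phi_i(x)-\frac{C}{2}|x|^2$ has Hessian $\nabla^2\phi_i-CI\preceq 0$. Since $\Omega$ is convex, this function is concave on $\overline{\Omega}$. An infimum of concave functions is concave, so $v-\frac C2|x|^2$ is concave, which gives semiconcavity.

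For the Lipschitz property, let $L=\sup_i\|\nabla\phi_i\|_{C(\overline{\Omega})}<\infty$, which is finite by the uniform $C^2$ bound. Each $\phi_i$ is $L$-Lipschitz on the convex set $\overline{\Omega}$. Hence for any $x,y$ and any $i$,
\[
v(x)\le\phi_i(x)\le\phi_i(y)+L|x-y|.
\]
Taking the infimum over $i$ gives $v(x)\le v(y)+L|x-y|$, and by symmetry $v$ is $L$-Lipschitz.

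\emph{Necessity.} Let $v$ be $L$-Lipschitz on $\overline{\Omega}$ and let $w=v-C|x|^2$ be concave on $\Omega$; by continuity $w$ is also concave on $\overline{\Omega}$. The index set will be $\mathcal I=\{(y,p): y\in\Omega,\ p\in\partial^+w(y)\}$, where
\[
\partial^+w(y)=\{p:\ w(x)\le w(y)+p\cdot(x-y)\ \text{for all } x\in\overline{\Omega}\}.
\]
This set is nonempty for interior $y$ by Hahn--Banach, using concavity. For $(y,p)\in\mathcal I$ I define
\[
\phi_{y,p}(x)=v(y)+p\cdot(x-y)+C|x|^2-C|y|^2 .
\]
The supergradient inequality gives $\phi_{y,p}\ge v$ on $\overline{\Omega}$, with equality at $x=y$. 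Hence $v=\inf_i\phi_i$ on $\Omega$. Each $\phi_{y,p}$ has Hessian $2CI$, so the bound is uniform in $i$. The index set is uncountable because $\Omega$ is open and nonempty.

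\emph{Main obstacle: uniform $C^2$ bounds.} These require $|p|$ and the constant terms to be bounded independently of $(y,p)$. The constants $v(y)$ and $C|y|^2$ are bounded since $\overline{\Omega}$ is compact. The difficulty is bounding $p$, because near $\partial\Omega$ supergradients of a concave function can blow up. I will use the global Lipschitz bound on $v$ here, which is exactly why Lipschitz continuity is assumed on $\overline{\Omega}$ rather than on $\Omega$.

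For interior $y$, $w$ is differentiable almost everywhere near $y$, with $|\nabla w|\le L+2C\,\mathrm{diam}$ where $\mathrm{diam}$ is a bound on $|x|$ over $\overline{\Omega}$. For a concave function, $\partial^+w(y)$ is the closed convex hull of limits of nearby gradients. Therefore $|p|\le L+2C\,\mathrm{diam}$, uniformly in $y\in\Omega$.

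\emph{Boundary points.} For $y\in\partial\Omega$ I choose $y_n\in\Omega$ with $y_n\to y$. By Lipschitz continuity,
\[
\inf_i\phi_i(y)\le\phi_{y_n,p_n}(y)\le v(y_n)+M|y-y_n|\to v(y),
\]
where $M$ bounds the gradients of all $\phi_i$. Combined with $\phi_i\ge v$ on $\overline{\Omega}$, this gives equality at $y$ as well, completing the representation on $\overline{\Omega}$.
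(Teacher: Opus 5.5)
Your argument is correct, and its core construction is the same as the paper's: at each point you touch $v$ from above by a quadratic whose linear part is bounded through the Lipschitz constant. Two sub-steps differ.

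\emph{Bounding the supergradients.} The paper inserts the test point $x=y-\varepsilon p/|p|$ into the quadratic inequality of Cannarsa--Sinestrari, Prop.~3.3.1, and obtains $|p|\le L$ for $p\in D^+v(y)$. You instead invoke the representation of $\partial^+w(y)$ as the convex hull of reachable gradients. That is valid but heavier than needed: the same test applied to your $w$, namely $w(y-tp/|p|)\le w(y)-t|p|$, gives $|p|\le \mathrm{Lip}(w)$ in one line.

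\emph{Boundary points.} The paper indexes over $\overline{\Omega}$ and asserts ``by continuity'' that some $p_y$ with $|p_y|\le L$ satisfies the inequality at $y\in\partial\Omega$. Making this rigorous requires extracting a convergent subsequence of bounded $p_n\in D^+v(y_n)$ and passing to the limit. You index only over interior points and deduce $\inf_i\phi_i(y)=v(y)$ on $\partial\Omega$ from the uniform Lipschitz bound of the family. This avoids limiting supergradients altogether and is arguably cleaner. Your sufficiency direction is a self-contained version of the paper's appeal to Corollary~2.1.6.

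\emph{Constants.} This is not merely loose bookkeeping. With Definition~2.1 read literally ($v-C|x|^2$ concave), the forward implication with the same $C$ is false. Take $v(x)=C|x|^2$: it is $C$-semiconcave in that sense, but any representation with $|\nabla^2\phi_i|\le C$ would make $v-\frac C2|x|^2=\frac C2|x|^2$ concave.

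The statement, like the paper's proof (which uses $\frac C2|x-y|^2$), lives in the normalization of the introduction: $v-\frac C2|x|^2$ concave. Set $w=v-\frac C2|x|^2$ and $\phi_{y,p}(x)=v(y)+p\cdot(x-y)+\frac C2(|x|^2-|y|^2)$. Then your proof gives exactly the stated constants in both directions.

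Also, obtaining $C/2$ in your sufficiency direction under Definition~2.1 is a gain, not a loss, since $C/2$-semiconcavity implies $C$-semiconcavity there.
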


\begin{proof}[Proof of Theorem \ref{theo:SemiConcaveRepresentation}]
Let us first suppose that $v$ satisfies the specified properties  and prove the existence of   a family of functions $\{\phi_{i}\}_{i\in \mathcal{I}}\subset C^2(\overline{\Omega})$ satisfying the conclusions of the theorem. In the following we denote by $L$ the Lipschitz constant of $v$.

Due to the semiconcavity of $v$  and the convexity of $\Omega$, we have by Proposition 3.3.1 in \cite{Cannarsa2004} that for all $x,y\in \Omega$,
\beq p\in D^+v(y) \text{ if and only if } v(x)\leq v(y)+p(x-y)+\frac{C}{2}|x-y|^2, \label{theo:SemiConcaveRepresentation:eq2}\eeq
where $D^+v(y)$ is the upper-differential of $v$ at $y$, which is defined by 
$$D^+v(y):=\left\{p\in\R^d: \ \limsup_{x\to y}\frac{v(x)-v(y)-p\cdot (x-y)}{|y-x|}\leq 0 \right\}.$$ 
according to Definition 3.1.1 in  \cite{Cannarsa2004}. We note that $D^+ v(x)$ is not empty for all $x\in\Omega$ since $v$ is semiconcave (see Theorem 3.3.4 (b) in \cite[Chapter 3]{Cannarsa2004}).

By continuity this inequality also holds for $x\in \bar \Omega$.  We will construct the family $\{\phi_{i}\}_{i\in \mathcal{I}}$ by using \eqref{theo:SemiConcaveRepresentation:eq2}. For this purpose we verify that for every $y\in \Omega$ the norm of the elements in $D^+v(y)$ is bounded by $L$.

Let us consider $y\in\Omega$ and $p\in D^+v(y)$. Without loss of generality we assume $p\neq 0$. Let us set $x=y-\frac{\varepsilon}{|p|}p$. Since $\Omega$ is open, for $\varepsilon$ small enough we have that  $x\in\Omega$. Then for such $\epsilon$ we have by means of \eqref{theo:SemiConcaveRepresentation:eq2}  that
$$ v\left(y-\frac{\varepsilon}{|p|}p\right)\leq v(y)-\varepsilon|p|+\frac{C}{2}\varepsilon^2.$$
Rearranging the terms in the previous inequality and using the Lipschitz continuity of $v$ we obtain
$$ \varepsilon|p| \leq L\varepsilon+\frac{C}{2}\varepsilon^2. $$
Dividing both sides of the inequality by $\varepsilon$ and letting $\varepsilon\to 0^+$ we obtain that $|p|\leq L$, which proves the claim.

For $y\in\partial\Omega$ it is not true that all the elements $D^+v(y)$ are bounded. However, arguing by continuity and using \eqref{theo:SemiConcaveRepresentation:eq2} it is possible to prove that there exists at least one element satisfying \eqref{theo:SemiConcaveRepresentation:eq2} which is bounded by $L$. This observation proves that for every $y\in\overline{\Omega}$ there exists $p_y\in \R^d$ with $|p_y|\leq L$ such that
$$ v(x)\leq v(y)+p_y(x-y)+\frac{C}{2}|y-x|^2.$$
Then, observing that the previous inequality is attained for $x=y$, setting $\mathcal{I}=\overline{\Omega}$, and
$$\phi_y(x)=v(y)+p_y(x-y)+\frac{C}{2}|y-x|^2 \mbox{ for y}\in\overline{\Omega},$$
we obtain \eqref{theo:SemiConcaveRepresentation:eq1}, and the asserted uniform bounds on  $ \{\phi_y\}_{y\in {\bar \Omega}}$.

The reciprocal implication is given by Corollary 2.1.6 in \cite{Cannarsa2004} and the fact that the infimum over a family of Lipschitz functions is Lipschitz as well.

\end{proof}

We next assert that the index set $\mathcal{I}$  in the previous theorem can be replaced by a countable one.

\begin{prop}
\label{prop:discreteRep}
    Let $v\in C(\overline{\Omega})$ be a semiconcave function with constant $C>0$ and Lipschitz continuous in $\overline{\Omega}$ with constant $L>0$. Let  $\{\phi_{i}\}_{i\in\mathcal{I}}$ denote the family  appearing in \Cref{theo:SemiConcaveRepresentation}. Then there exists $\{\tilde{\phi}_i\}_{i=1}^{\infty}\subset \{\phi_{i}\}_{i\in\mathcal{I}}$ such that
     $$ v_{n}(x)=\min_{i=1,\ldots,n}\tilde{\phi}_{i}(x)$$
    satisfies
    \begin{equation}\label{eq:kk3}
    \lim_{n\to\infty}\norm{v_n-v}_{C(\overline{\Omega})}+\norm{\nabla v_n-\nabla v}_{W^{1,p}(\Omega)}=0,
    \end{equation}
    for all $p\in [1,\infty)$, and
     \begin{equation}\label{eq:kk33}
    \lim_{n\to\infty}\nabla v_n(x)= \nabla v(x)\mbox{ a.e. in }\Omega.
    \end{equation}
Further,  for each  compact subset $K$ of $\Omega$, such that $D^+v(x)$ is a singleton for  all $x\in  K$, we have
 \begin{equation}\label{eq:kk333}
 \lim_{n\to\infty}v_n=v\mbox{ in }C^1(K).
  \end{equation}
\end{prop}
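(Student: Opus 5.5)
We take the family constructed in the proof of Theorem \ref{theo:SemiConcaveRepresentation}, $\phi_y(x)=v(y)+p_y\cdot(x-y)+\frac{C}{2}|x-y|^2$ with $|p_y|\le L$, and pick a countable dense set $\{y_i\}_{i\ge1}\subset\Omega$. We set $\tilde\phi_i=\phi_{y_i}$ and $v_n=\min_{i\le n}\tilde\phi_i$.

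\textbf{Uniform convergence.} Since $\tilde\phi_i\ge v$ on $\overline\Omega$ for every $i$, we have $v_n\ge v$, and $v_n$ is nonincreasing in $n$. Fix $x\in\overline\Omega$ and $\varepsilon>0$, and pick $y_i$ with $|x-y_i|<\delta$. Then
$$v_n(x)-v(x)\le \tilde\phi_i(x)-v(x)\le |v(y_i)-v(x)|+L\delta+\tfrac C2\delta^2\le 2L\delta+\tfrac C2\delta^2 \quad\text{for } n\ge i.$$
Compactness of $\overline\Omega$ and density make this uniform: choose $n_\delta$ so that $\{y_1,\dots,y_{n_\delta}\}$ is a $\delta$-net of $\overline\Omega$. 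Hence $\|v_n-v\|_{C(\overline\Omega)}\to0$.

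\textbf{Gradient convergence.} (I read the $W^{1,p}$ term in the statement as the $W^{1,p}$ norm of $v_n-v$, i.e.\ as $L^p$ convergence of the gradients.) The key observation is that all $v_n$ are $C$-semiconcave and uniformly $L'$-Lipschitz with $L'=L+C\,\mathrm{diam}\,\Omega$, since $|\nabla\tilde\phi_i|\le L'$. Semiconcave functions converging uniformly have converging superdifferentials: if $x_n\to x$ and $q_n\in D^+v_n(x_n)$ with $q_n\to q$, then passing to the limit in
$$v_n(z)\le v_n(x_n)+q_n\cdot(z-x_n)+\tfrac C2|z-x_n|^2$$
gives $q\in D^+v(x)$. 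By Rademacher and Alexandrov-type results (Theorem 3.3.4 in \cite{Cannarsa2004}), $v$ is differentiable a.e.\ with $D^+v(x)=\{\nabla v(x)\}$ there. Take $x$ in the full-measure set where $v$ and all $v_n$ are differentiable. The gradients $\nabla v_n(x)$ are bounded, and every cluster point lies in $D^+v(x)=\{\nabla v(x)\}$, which gives \eqref{eq:kk33}. Dominated convergence with the bound $L'$ then yields $L^p$ convergence for every $p<\infty$, hence \eqref{eq:kk3}.

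\textbf{Uniform gradient convergence on $K$.} On $K$ the same argument, run by contradiction, gives uniform convergence. Suppose there were $x_n\in K$ and $q_n\in D^+v_n(x_n)$ with $|q_n-\nabla v(x_n)|\ge\eta$. Extract subsequences with $x_n\to x\in K$ and $q_n\to q\in D^+v(x)=\{\nabla v(x)\}$. Continuity of $\nabla v$ on $K$ holds because the superdifferential is upper semicontinuous and single-valued there. This contradicts the assumed gap $\eta$.

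The main obstacle is that $v_n$ is only a minimum of $C^2$ functions and is not $C^1$, so ``$C^1(K)$'' must be interpreted as uniform convergence of the superdifferentials $\sup_{x\in K}\sup_{q\in D^+v_n(x)}|q-\nabla v(x)|\to0$ (equivalently of the a.e.\ gradients). Making this interpretation and the superdifferential stability lemma precise is the step needing most care. A secondary point is the boundary: the density argument needs points $y_i$ near $\partial\Omega$, and this is handled by using points of $\overline\Omega$ with the bounded $p_y$ from Theorem \ref{theo:SemiConcaveRepresentation}.
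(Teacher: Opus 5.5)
Your proof is correct and follows the paper's argument. The steps are the same: a covering/density selection giving monotone uniform convergence, then a.e.\ convergence of gradients from the stability of superdifferentials of uniformly semiconcave, uniformly Lipschitz functions (the paper simply cites Theorem 3.3.3 of \cite{Cannarsa2004} here), then dominated convergence, and finally a compactness-contradiction argument on $K$, with the statement read, as in the paper, as $W^{1,p}$ convergence of $v_n$ and uniform convergence of the (super)gradients on $K$. The differences are minor: you select the explicit touching functions $\phi_{y_i}$ at a dense sequence, whereas the paper picks near-minimizing indices at grid points, which works for any representing family from Theorem \ref{theo:SemiConcaveRepresentation}; and you explicitly justify the continuity of $\nabla v$ on $K$, which the paper's contradiction step uses tacitly.
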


\begin{proof}[Proof of Proposition \ref{prop:discreteRep}]
For $n\in \N$ let us consider a family of rectangles $\{\tau_{i,h}\}_{i=1}^{N_{n}}$ such that $diam(\tau_{i,n})\leq \frac{1}{n}$, where $diam(\cdot)$ denotes the diameter of $\tau_{i,n}$, and
$$ \overline{\Omega}\subset \bigcup_{i=1}^{N_n}\tau_{i,n}\mbox{ and }\overline{\Omega}\cap \tau_{i,n}\neq \emptyset \mbox{ for all }i=1,\ldots,N_n.$$
For each $i=1,\ldots,N_n$ we choose $x_{i,n}\in \tau_{i,n}\cap\overline{\Omega}$.  By the definition of infimum,  for each $x_{i,n}$ there exists $j(i,n)$ such that $\phi_{j(i,n)}(x_{i,n})\leq v(x_{i,n})+ \frac{1}{n}$. This combined with the Lipschitz continuity of $v$ on $\overline{\Omega}$  and the uniform boundedness of $\{\phi_i\}_{i\in \mathcal{I}}$  in $C^1(\overline{\Omega})$ we have for each $i= 1,\dots,N_n$
\beq  |\phi_{j(i,n)}(x)-v(x)|\leq \frac{(2 L+1)}{n}, \mbox{ for all }x\in \tau_{i,n}.\label{prop:discreteRep:proof:error1}\eeq
Let us denote by $\tilde{\mathcal{I}}_{n}$ the  finite set composed by the indices $j(i,n)$ defined above. Further set $\mathcal{I}_n=\bigcup_{j\leq n}\tilde{\mathcal{I}}_{j}$ and $\mathcal{I}_{\infty}=\cup_{n\in\N} \mathcal{I}_n$,
and define $v_{n}\in C(\overline{\Omega})$ as follows
$$ v_{n}(x)=\min_{i\in \mathcal{I}_{n}} \phi_i(x).$$
Observe that $v_n$ is semiconcave. Further from \eqref{prop:discreteRep:proof:error1} we deduce that
$$ |v_n(x)-v(x)|\leq \frac{(2 L+1)}{n} \mbox{ for all }x\in \overline{\Omega},$$
from which the uniform convergence of $v_{n}$ to $v$ follows. Almost everywhere convergence of the gradients as claimed in \eqref{eq:kk33} follows from \cite[Theorem 3.3.3]{Cannarsa2004}.

 The convergence of $v_{n}$ in $W^{1,p}(\Omega)$ for all $p\in [1,\infty)$ is a simple consequence of the a.e. convergence of $\nabla v_n$ and the dominated convergence theorem. Consequently \eqref{eq:kk3} is verified.

     We now prove the last part of the result and assume that  $Dv^+(x)$ is a singleton for all $x\in \bar K$. Consequently $v$ is differentiable for all $v\in \bar \Omega$.
      Let us denote by $B \subset \Omega$  the set where the gradients   $v_n$ exists for all $n$, and observe that $|\Omega \setminus B| =0$, where $|\cdot|$ denotes the measure of a set.
     Proceeding by contradiction let us assume that $v_{n}$  does not converge to  $v$ in $W^{1,\infty}(K)$.
       In this case, there exists $\varepsilon>0$, a sequence $x_k\in B $, and a sub-sequence of $v_{n}$ denoted by $v_{n(k)}$ such that
     \beq  |\nabla v_{n(k)}(x_k)-\nabla v(x_k)|\geq \varepsilon.\label{prop:gradConv:proof:eq1}\eeq
     Since $K$ is compact and $\{v_{n}\}_n$ is uniformly bounded in $W^{1,\infty}(\Omega)$, there exists $\bar{x}$ in $\bar K$ and $p\in\R^d$ such that $x_k$ converges to $\bar{x}$ and $\nabla v_{n(k)}$ converges to $p$. By the semiconcavity of $v_n$ is easy to verify that
     $$ v(y)-v(\bar{x})-p\cdot(y-\bar{x})\leq C|\bar x-y|^2, \text { for all } y \in \Omega.$$
     This implies that $p\in D^+ v(\bar{x})$ and by assumption $p= \nabla v(\bar{x})$.
      However, by \eqref{prop:gradConv:proof:eq1} we  have
       $| p-\nabla v(\bar{x})|\ge \epsilon.$
     This  gives the desired  contradiction, and  $\lim_{n\to\infty}v_n=v\mbox{ in }C^1(K)$ follows.
     \end{proof}

Our parametrization will  build on this result. It  consists in replacing the infinite family $\{\tilde \phi_{i}\}_{i=1}^\infty$ by a family of $n$ parameterized functions and  it utilizes a smooth approximation of the minimum operation $\psi_n$ of $n$ real numbers. We first describe the construction of the smooth approximation $\psi_{n,\varepsilon}$, $\varepsilon>0$,  and its properties. Then, relying on these properties, we introduce the family of parameterized functions  together with the usage of $\psi_{n,\varepsilon}$, see \eqref{def:parametrization} below.

For $n\le m\in\N\setminus\{0\}$ and $a\in \R^m$ let us denote by $\psi_{n}(a)$ the minimum over $a$, that is,
\beq
\psi_{n}(a)=\min_{i\in\{1,\ldots,n\}}a_i.
 \eeq
 Thus, if $n < m$ then $\psi_{n}(a)$ only considers the first $n$ elements of $a$.  We also observe that $\psi_{n}$ is a $1-$Lipschitz continuous function on $\R^m$ endowed with the maximum norm, but is not $C^1$.

For obtaining a $C^1$ approximation of $\psi_n$, we note that it can be written in a recursive manner, namely, for $i\in \{1,\ldots,n-1\}$ we have the following relation
       \begin{equation}\label{eq:kk2}
        \psi_{i+1}(a)=\min\{a_{i+1},\psi_{i}(a)\}, \mbox{ with }\psi_{1}=a_1.
        \end{equation}
Further, for the case of the minimum of only two elements $x,y\in\R$ we have
\beq \min(x,y)= x-(x-y)_+.\label{eq:min2}\eeq
where $(\cdot)_+$ stands for the positive part function. Combining \eqref{eq:kk2} and \eqref{eq:min2}, we can evaluate $\psi_{n}$ by the following recursive formula
\beq \psi_{i+1}(a)=a_{i+1}-(a_{i+1}-\psi_{i}(a))_+, \ \psi_{1}=a_1, \ i\in\{1,\ldots,n-1\}.\label{def:Psi}\eeq
It is noteworthy that an equivalent formula holds for the case of the maximum of a vector and this allowed the authors of \cite{Hanin} and \cite{ChenShiZhang} to prove the universal approximation property of ICNN and ICRNN. In our case we will use this to provide a smooth approximation of the minimum.

We shall employ  a smooth regularization $g_{\varepsilon}\in C^{1,1}(\R),$  $\varepsilon>0$,  of the positive part function $(\cdot)_+$, which satisfies the following properties

\begin{subequations}
\label{hypo:positivepart}
\begin{align}
g_{\varepsilon}(x)\geq 0 \text{ for all }x\in\R,
    \label{hypo:positivepart:1}\\
     g_{\varepsilon}'(x)\in [0,1]\text{ for all }x\in \R,\label{hypo:positivepart:2} \\
     g_{\varepsilon}''(x)\geq 0\text{ for all {{ almost all }} }x\in\R,\label{hypo:positivepart:3} \\
     \norm{(\cdot)_{+}-g_{\varepsilon}}_{C(\R)}\leq \varepsilon,\label{hypo:positivepart:4} \\
     \lim_{\varepsilon\to 0^+}g_{\varepsilon}'=\chi_{[0,\infty)}\mbox{ in }  C_{loc}(\R\setminus\{0\}), \label{hypo:positivepart:5}\\
     g_{\varepsilon}''\in L^\infty(\mathbb{R}) \text{ for each } \varepsilon >0. \label{hypo:positivepart:6}
\end{align}
\end{subequations}

\begin{rem}\label{rem:kk1}
We provide examples of  functions $g_{\varepsilon}$ which \eqref{hypo:positivepart}. We shall return to them in the following section.  First, we consider the Moreau envelope of the positive part function, namely,
$$ g_{\varepsilon,M}(s)=\min_{t\in \R}(t)_{+}+\frac{1}{2\varepsilon}|t-s|^2=\left\{
\begin{array}{ll}
0 & \mbox{ if }s<0\\
\frac{1}{2\varepsilon}s^2 & \mbox{ if } s\in [0,\varepsilon) \\
s-\frac{\varepsilon}{2} & \mbox{ if }s\geq\varepsilon
\end{array}
\right. $$
It is not hard to see that $g_{\varepsilon,M}$ satisfies \eqref{hypo:positivepart}. Further, it satisfies that
$ g_{\varepsilon,M}'(0)=0 $ for all $\varepsilon>0$ and $g_{\varepsilon,M} \in C^{1,1}(\R)$, but it is not in $C^{2}(\R)$.

Another example is $g_{\varepsilon,A}$ defined as:
$$g_{\varepsilon,A}(s)=\frac{1}{2}\left(s+\sqrt{s^2+\varepsilon^2}-\varepsilon\right).$$
To see that this function satisfies \eqref{hypo:positivepart} we note that $(s)_+=\frac{1}{2}(|s|+s)$ and that $\sqrt{s^2+\varepsilon^2}-\varepsilon$ is a smooth approximation of the absolute value. Using these properties it is easy to see that $g_{\varepsilon,A}$ satisfies \eqref{hypo:positivepart}.
\end{rem}

 By replacing the positive part in \eqref{def:Psi} by $g_{\varepsilon}$ we obtain a smooth approximation of $\psi_{n}$ given by the following  process:
\beq  \psi_{i+1,\varepsilon}(a)=a_{i+1}-g_{\varepsilon}(a_{i+1}-\psi_{i,\varepsilon}(a)),\ i\in\{1,\ldots,n-1\},\ \psi_{1,\varepsilon}=a_1.
\label{defi:PsiAp}
\eeq

In Proposition \ref{prop:propertiesApprox} below, some useful properties of $\psi_{n,\varepsilon}$ are shown. These properties will permit us to construct a semiconcavity preserving parametrization.

\begin{prop}
\label{prop:propertiesApprox}
Let $n\in\N\cap[2,\infty)$ and $\varepsilon>0$ be arbitrarily  fixed, and assume that $g_{\varepsilon}\in C^{1,1}({\R})$ satisfies \eqref{hypo:positivepart}. Then the function $\psi_{n,\varepsilon}$ defined in \eqref{defi:PsiAp} is of class $C_{loc}^{1,1}(\R^n)$ and satisfies for all $i,j\in\{1,\ldots,n\}$ that
\beq
\frac{\partial\psi_{n,\varepsilon}}{\partial a_i}(a)=\delta_{i,n}-g'_{\varepsilon}(a_n-\psi_{n-1,\varepsilon}(a))(\delta_{i,n}-\frac{\partial\psi_{n-1,\varepsilon}}{\partial a_i}(a)), \mbox{ for all }a\in\R^n,
\label{prop:propertiesApprox:cons1}
\eeq
\beq
\begin{array}{l}
\displaystyle\frac{\partial^2 \psi_{n,\varepsilon}}{\partial a_i,\partial a_j}(a)=-g''_{\varepsilon}(a_n-\psi_{n-1,\varepsilon}(a))\left(\delta_{i,n}-\frac{\partial\psi_{n-1,\varepsilon}}{\partial a_i}(a)\right)\left(\delta_{j,n}-\frac{\partial\psi_{n-1,\varepsilon}}{\partial a_j}(a)\right)\\
\ecart\displaystyle
+g'_{\varepsilon}(a_n-\psi_{n-1,\varepsilon}(a))\frac{\partial^2 \psi_{n-1,\varepsilon}}{\partial a_i,\partial a_j}(a), \mbox{ for almost all }a\in\R^n,
\end{array}
\label{prop:propertiesApprox:cons2}
\eeq
where $\delta_{i,j}$ stand for the Kronecker delta. Further, we have that
\beq
\norm{\psi_{n,\varepsilon}-\psi_{n}}_{L^{\infty}(\R^n)}\leq (n-1)\varepsilon,
\label{prop:propertiesApprox:cons6}
\eeq

\beq \frac{\partial\psi_{n,\varepsilon}}{\partial a_i}(a)\geq 0,\quad \sum_{j=1}^{n}\frac{\partial\psi_{n,\varepsilon}}{\partial a_j}(a)=1,\mbox{ for all }a\in\R^n
\label{prop:propertiesApprox:cons3}
\eeq

\beq
-2(n-1)\norm{g''_\varepsilon}_{L^\infty(\R)}\leq b^{\top}\nabla^2\psi_{n,\varepsilon}(a)b \leq 0,\mbox{ for almost all }a\in\R^n,
\label{prop:propertiesApprox:cons5}
\eeq
for all $b\in\R^n$ with $|b|=1$, and
\beq
\left|\nabla^2 \psi_{n,\varepsilon}(a)\right|\leq 2(n-1)\norm{g''_\varepsilon}_{L^\infty(\R)}
\mbox{ for almost all }a\in\R^n.
\label{prop:propertiesApprox:cons4}
\eeq

\end{prop}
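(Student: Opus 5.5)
The plan is to argue by induction on $n$, using the recursion \eqref{defi:PsiAp}. We view $\psi_{k,\varepsilon}$ as a function on $\R^n$ that depends only on $a_1,\dots,a_k$. The base case is $\psi_{1,\varepsilon}=a_1$: it is linear, its gradient is $e_1$ (nonnegative, summing to one), its Hessian vanishes, and it coincides with $\psi_1$.

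\textbf{Regularity and derivative formulas.} Since $g_\varepsilon\in C^{1,1}(\R)$ and $\psi_{n-1,\varepsilon}\in C^{1,1}_{loc}$ by the induction hypothesis, the composition $a\mapsto g_\varepsilon(a_n-\psi_{n-1,\varepsilon}(a))$ is $C^{1,1}_{loc}$. Differentiating \eqref{defi:PsiAp} by the chain rule gives \eqref{prop:propertiesApprox:cons1} directly. For \eqref{prop:propertiesApprox:cons2} we differentiate \eqref{prop:propertiesApprox:cons1} once more. Here $g'_\varepsilon$ is Lipschitz and $\nabla\psi_{n-1,\varepsilon}$ is locally Lipschitz, so products and compositions are again locally Lipschitz, and Rademacher's theorem gives a.e. differentiability. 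The delicate point is the chain rule for $g_\varepsilon''$ at points where $a_n-\psi_{n-1,\varepsilon}(a)$ lies in the null set on which $g''_\varepsilon$ fails to exist. I would handle it with the standard chain rule for Lipschitz compositions with Sobolev maps. On the set where the inner function $h(a)=a_n-\psi_{n-1,\varepsilon}(a)$ takes values in a null set, we have $\nabla h=0$ a.e., and there the term containing $g''_\varepsilon$ is multiplied by $(\delta_{i,n}-\partial_i\psi_{n-1,\varepsilon})(\delta_{j,n}-\partial_j\psi_{n-1,\varepsilon})=\partial_ih\,\partial_jh=0$. So the formula holds a.e. with any value assigned to $g''_\varepsilon$ there. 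I expect this measure-theoretic step to be the main obstacle.

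\textbf{Sup-norm error and gradient structure.} Write $\psi_n=a_n-(a_n-\psi_{n-1})_+$. Then
\[|\psi_{n,\varepsilon}-\psi_n|\le |g_\varepsilon(a_n-\psi_{n-1,\varepsilon})-(a_n-\psi_{n-1,\varepsilon})_+|+|(a_n-\psi_{n-1,\varepsilon})_+-(a_n-\psi_{n-1})_+|\le \varepsilon+\|\psi_{n-1,\varepsilon}-\psi_{n-1}\|_\infty,\]
using \eqref{hypo:positivepart:4} and the fact that $(\cdot)_+$ is $1$-Lipschitz. Induction then yields \eqref{prop:propertiesApprox:cons6}. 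For \eqref{prop:propertiesApprox:cons3}, put $\theta=g'_\varepsilon(\cdot)\in[0,1]$ by \eqref{hypo:positivepart:2}. Formula \eqref{prop:propertiesApprox:cons1} reads $\partial_n\psi_{n,\varepsilon}=1-\theta$ and $\partial_i\psi_{n,\varepsilon}=\theta\,\partial_i\psi_{n-1,\varepsilon}$ for $i<n$ (with $\partial_n\psi_{n-1,\varepsilon}=0$). Thus the new gradient is the convex combination $(1-\theta)e_n+\theta\nabla\psi_{n-1,\varepsilon}$, which is nonnegative and sums to one by induction.

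\textbf{Hessian bounds.} Set $w=e_n-\nabla\psi_{n-1,\varepsilon}(a)$. By \eqref{prop:propertiesApprox:cons2},
\[b^\top\nabla^2\psi_{n,\varepsilon}b=-g''_\varepsilon(\cdot)(w\cdot b)^2+\theta\, b^\top\nabla^2\psi_{n-1,\varepsilon}b.\]
Both terms are $\le0$, by \eqref{hypo:positivepart:3}, $\theta\ge0$, and induction; this gives the upper bound. For the lower bound, since $\nabla\psi_{n-1,\varepsilon}$ is a probability vector supported on the first $n-1$ coordinates, $|w|^2=1+|\nabla\psi_{n-1,\varepsilon}|^2\le2$, so $(w\cdot b)^2\le2$. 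With $\theta\le1$ and induction this gives the bound $-2\|g''_\varepsilon\|_\infty-2(n-2)\|g''_\varepsilon\|_\infty=-2(n-1)\|g''_\varepsilon\|_\infty$. Finally, \eqref{prop:propertiesApprox:cons4} follows because the Hessian is symmetric and negative semidefinite, so its operator norm equals $\max_{|b|=1}|b^\top\nabla^2\psi_{n,\varepsilon}b|$.
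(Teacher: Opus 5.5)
Your proof is correct and follows essentially the same route as the paper: induction through the recursion \eqref{defi:PsiAp}, the convex-combination structure $(1-\theta)e_n+\theta\nabla\psi_{n-1,\varepsilon}$ of the gradient, and the splitting of $b^\top\nabla^2\psi_{n,\varepsilon}b$ into a $-g''_\varepsilon(w\cdot b)^2$ term plus $g'_\varepsilon$ times the previous quadratic form. The only differences are minor: you bound $(w\cdot b)^2\le 2$ via Cauchy--Schwarz and $|w|^2\le 2$ where the paper uses $(x-y)^2\le 2(x^2+y^2)$ and Jensen, and you treat the a.e.\ chain rule for $g''_\varepsilon$ more carefully than the paper, which calls it immediate — and that step is easier than you fear, because $\partial_{a_n}h\equiv 1$ means Fubini already shows $h^{-1}(N)$ is Lebesgue-null for every null set $N\subset\R$.
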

\begin{proof}
By the chain rule it is immediate to obtain \eqref{prop:propertiesApprox:cons1} and \eqref{prop:propertiesApprox:cons2}. In \eqref{prop:propertiesApprox:cons3}, the positivity of the partial derivatives is a direct consequence of \eqref{hypo:positivepart:2}. For the second part of \eqref{prop:propertiesApprox:cons3} we proceed by induction on $n$. The base case is $\psi_{1,\epsilon}(a)=a_1$, which clearly holds true. Let us assume as inductive hypothesis that the claim holds for $n-1$ with $n\geq 2$. By \eqref{prop:propertiesApprox:cons1} and again \eqref{hypo:positivepart:2},  we see that $\nabla \psi_{n.\varepsilon}(a)$ is a convex combination between $e_{n}$ (the $n-th$ vector of the canonical basis of $\R^n$) and $\nabla \psi_{n-1}(a)$, where the gradient is with respect to $a$. Since the sum of the $e_{n}$ is equal to $1$ and by the inductive hypothesis the same holds true for $\nabla \psi_{n-1}(a)$, we get that the sum of the elements of $\nabla \psi_{n}(a)$ is $1$ as well, which proves the second part of \eqref{prop:propertiesApprox:cons3}.

Let $b\in\R^n$ be such that $|b|=1$. By \eqref{prop:propertiesApprox:cons2} we have that

\begin{equation}
\begin{array}{l}
\displaystyle
\sum_{i,j=1}^n \frac{\partial^2\psi_{n,\varepsilon}(a)}{\partial a_{i}\partial a_{j}}b_ib_j =-g''_{\varepsilon}(a_n-\psi_{n-1,\varepsilon}(a))((e_n-\nabla \psi_{n-1,\varepsilon}(a))\cdot b)^2\\
\ecart \displaystyle
+g'_{\varepsilon}(a_n-\psi_{n-1,\varepsilon}(a)) \sum_{i,j=1}^n \frac{\partial^2\psi_{n-1,\varepsilon}(a)}{\partial a_{i}\partial a_{j}}b_ib_j.\label{prop:propertiesApprox:eq1}
\end{array}
\end{equation}

We note that the first term in the right hand-side of the above expression is bounded from below. To see this, we first point out that
\[((e_n-\nabla\psi_{n-1,\varepsilon}(a))\cdot b)^2=\left(b_n-\sum_{i=1}^{n-1}\frac{\partial \psi_{n-1,\varepsilon}}{\partial a_i}(a)b_i\right)^2\leq 2 \left(b_{n}^2+\sum_{i=1}^{n-1}\frac{\partial \psi_{n-1,\varepsilon}}{\partial a_i}(a)b_i^2\right)\le 2,\]
where we have used \eqref{prop:propertiesApprox:cons3}, Jensen inequality, $|b|=1$, and the fact that $(x-y)^2\leq 2(x^2+y^2)$ for all $x,y\in\R$. Using this and \eqref{hypo:positivepart:3} in \eqref{prop:propertiesApprox:eq1} we obtain
\begin{equation*}
\begin{array}{l}    
\displaystyle
-2g''_{\varepsilon}(a_n-\psi_{n-1,\varepsilon}(a))  +  g'_{\varepsilon}(a_n-\psi_{n-1,\varepsilon}(a))\sum_{i,j=1}^n \frac{\partial^2\psi_{n-1,\varepsilon}(a)}{\partial a_{i}\partial a_{j}}b_ib_j\\
\ecart \displaystyle
 \leq \sum_{i,j=1}^n \frac{\partial^2\psi_{n,\varepsilon}(a)}{\partial a_{i}\partial a_{j}}b_ib_j\leq \sum_{i,j=1}^n \frac{\partial^2\psi_{n-1,\varepsilon}(a)}{\partial a_{i}\partial a_{j}}b_ib_j.
\end{array}
\end{equation*}
Using induction in both of the above inequalities we obtain \eqref{prop:propertiesApprox:cons5}.

This implies that for almost all $a\in\R^n$, the eigenvalues of $\nabla^2 \psi_{n,\varepsilon}(a)$ are contained 
in $[-2(n-1)\norm{g''_{\varepsilon}}_{L^\infty(\R)},0]$, and hence \eqref{prop:propertiesApprox:cons4} holds.

We turn now our attention to the proof of \eqref{prop:propertiesApprox:cons6}. By subtracting \eqref{def:Psi} from \eqref{defi:PsiAp} we have that for all $a\in\R^n$ and $i=1,\ldots,n-1$
    $$ \psi_{i+1,\varepsilon}(a)-\psi_{i+1}(a)=
    \left(a_{i+1}-\psi_{i}(a)\right)_+-g_{\varepsilon}
    \left(a_{i+1}-\psi_{i,\varepsilon}(a)\right). $$
    Subtracting and adding  the term $\left(a_{i+1}-\psi_{i,\varepsilon}(a)\right)_+$ on the right hand-side of the previous inequality we obtain
    $$ \begin{array}{l}
         \displaystyle \psi_{i+1,\varepsilon}(a)-\psi_{i+1}(a)=  \left(a_{i+1}-\psi_{i}(a)\right)_+-\left(a_{i+1}-\psi_{i,\varepsilon}(a)\right)_++\\
         \ecart\displaystyle \left(a_{i+1}-\psi_{i,\varepsilon}(a)\right)_+-g_{\varepsilon}\left(a_{i+1}-\psi_{i,\varepsilon}(a)\right).
    \end{array} $$
    Using the 1-Lipschitz continuity of the positive part and  \eqref{hypo:positivepart:4} in the above expression we obtain that
    $$ |\psi_{i+1,\varepsilon}(a)-\psi_{i+1}(a)|\leq \varepsilon +|\psi_{i,\varepsilon}(a)-\psi_{i}(a)|.$$
    Summing from $i=1$ to $i=n-1$ we obtain the desired estimate for all $a \in \R^n$
    $$ |\psi_{n,\varepsilon}(a)-\psi_{n}(a)|\leq  (n-1)\varepsilon.$$
\end{proof}
\begin{rem}
It is interesting to observe that \eqref{prop:propertiesApprox:cons3} implies that the partial derivative of $\psi_{n,\varepsilon}$ at a point in $\R^n$ form a probability distribution. We will delve into this in Section \ref{sec:ProbInt}, where via \eqref{prop:propertiesApprox:cons3}, we give a  probabilistic interpretation of the partial derivatives of $\psi_{n,\varepsilon}$ and connect it with the discontinuities of the gradient of a semiconcave function. 
\end{rem}

To construct an approximation of the type \eqref{theo:SemiConcaveRepresentation:eq1} we choose a finite family of functions   $\{\phi_{i}\}_{i=1}^n\subset C^2(\overline{\Omega})$ and set
\begin{equation}\label{eq:kk1}
\Phi_n=(\phi_1,\ldots,\phi_n), \quad v_{n}=\psi_{n}\circ\Phi_n, \quad \text{ and } v_{n,\varepsilon}=\psi_{n,\varepsilon}\circ\Phi_{n}.
\end{equation}
In Proposition \ref{lemma:pospartapprox} below, we prove that $v_{n,\varepsilon}$ is a $C^{1,1}(\overline{\Omega})$ approximation of $v_n$ and that it is semiconcave.

\begin{prop}
\label{lemma:pospartapprox}
Let $\{\phi_{i}\}_{i=1}^n\subset C^2(\overline{\Omega})$.
Then $v_{n,\varepsilon}$ is $C-$semiconcave and $L-$Lipschitz continuous, with $$
C=\max_{i\in\{1,\ldots,n\}}\norm{\nabla^2 \phi_i}_{C(\overline{\Omega};\R^{d\times d})}\mbox{ and }L=\max_{i\in\{1,\ldots,n\}}\norm{\nabla \phi_i}_{C(\overline{\Omega};\R^{d})}.$$
Further,
\beq
\norm{v_{n,\varepsilon}-v_n}_{C(\overline{\Omega})}\leq (n-1)\varepsilon.
\label{lemma:pospartapprox:eq1}\eeq
and $v_{n,\varepsilon}$ is of class $C^{1,1}$ with
\beq
\norm{\nabla^2 v_{n,\varepsilon}}_{L^\infty(\Omega;\R^{d\times d})}\leq C+2n(n-1)L^2\norm{g''_{\varepsilon}}_{L^\infty(\R)}.
\label{lemma:pospartapprox:eq3}
\eeq
\end{prop}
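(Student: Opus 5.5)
The plan is to work directly with the composition $v_{n,\varepsilon}=\psi_{n,\varepsilon}\circ\Phi_n$ and transfer the properties of $\psi_{n,\varepsilon}$ from Proposition \ref{prop:propertiesApprox} via the chain rule. Since $\psi_{n,\varepsilon}\in C^{1,1}_{loc}(\R^n)$ and $\Phi_n\in C^2(\overline{\Omega};\R^n)$, the composition is $C^1$ with
\[
\nabla v_{n,\varepsilon}(x)=\sum_{i=1}^n \frac{\partial\psi_{n,\varepsilon}}{\partial a_i}(\Phi_n(x))\,\nabla\phi_i(x).
\]
The gradient of $\psi_{n,\varepsilon}$ is locally Lipschitz and $\Phi_n$ is Lipschitz on $\overline{\Omega}$, with bounded range. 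Hence $\nabla v_{n,\varepsilon}$ is Lipschitz, so $v_{n,\varepsilon}\in C^{1,1}$. By \eqref{prop:propertiesApprox:cons3}, $\nabla v_{n,\varepsilon}(x)$ is a convex combination of the $\nabla\phi_i(x)$, which gives $|\nabla v_{n,\varepsilon}|\le L$. Convexity of $\Omega$ then yields $L$-Lipschitz continuity. Estimate \eqref{lemma:pospartapprox:eq1} follows immediately from \eqref{prop:propertiesApprox:cons6} evaluated at $a=\Phi_n(x)$.

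For the second derivative I would first argue on a regularized level to avoid the issue that \eqref{prop:propertiesApprox:cons2} holds only a.e. in $a$ and $\Phi_n$ may map sets of positive measure into null sets. One option is to mollify $g_\varepsilon$, so that $g_\varepsilon''$ stays nonnegative and bounded by $\norm{g''_\varepsilon}_{L^\infty}$. This produces a $C^2$ function $\psi^\delta$ satisfying \eqref{prop:propertiesApprox:cons3}, \eqref{prop:propertiesApprox:cons5} and \eqref{prop:propertiesApprox:cons4} everywhere and converging to $\psi_{n,\varepsilon}$ in $C^1_{loc}$. For the composition $w^\delta=\psi^\delta\circ\Phi_n$ we have
\[
\xi^\top\nabla^2 w^\delta(x)\xi=\sum_{i}\partial_i\psi^\delta(\Phi_n(x))\,\xi^\top\nabla^2\phi_i(x)\xi+(D\Phi_n(x)\xi)^\top\nabla^2\psi^\delta(\Phi_n(x))(D\Phi_n(x)\xi).
\]
The second term is $\le 0$ by \eqref{prop:propertiesApprox:cons5}. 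The first term is at most $C|\xi|^2$, again because the weights form a probability vector. Hence $\nabla^2 w^\delta\le C\,I$, so $w^\delta$ is $C$-semiconcave with the paper's normalization. Semiconcavity passes to the uniform limit $\delta\to0$, which gives the semiconcavity of $v_{n,\varepsilon}$.

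For \eqref{lemma:pospartapprox:eq3}, the same formula with \eqref{prop:propertiesApprox:cons4} gives
\[
|\nabla^2 w^\delta|\le C+2(n-1)\norm{g''_\varepsilon}_{L^\infty}|D\Phi_n|^2,
\]
and $|D\Phi_n(x)\xi|^2=\sum_i(\nabla\phi_i\cdot\xi)^2\le nL^2|\xi|^2$. This yields the bound $C+2n(n-1)L^2\norm{g''_\varepsilon}_{L^\infty}$ uniformly in $\delta$. Since $\nabla w^\delta\to\nabla v_{n,\varepsilon}$ uniformly, the Lipschitz constant of $\nabla v_{n,\varepsilon}$ is bounded by the same quantity, which is \eqref{lemma:pospartapprox:eq3}. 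I expect the main obstacle to be this justification of the a.e. second-order chain rule. The mollification of $g_\varepsilon$ is the step I would try first: it preserves \eqref{hypo:positivepart:2}, \eqref{hypo:positivepart:3} and the $L^\infty$ bound on $g''$, so the proof of Proposition \ref{prop:propertiesApprox} applies verbatim to the mollified recursion.
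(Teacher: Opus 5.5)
Your proposal is correct and follows the paper's proof. The chain rule with \eqref{prop:propertiesApprox:cons3} gives the Lipschitz bound and the $C$-part of the Hessian, \eqref{prop:propertiesApprox:cons5} and \eqref{prop:propertiesApprox:cons4} control the sign and size of $D\Phi_n^\top\nabla^2\psi_{n,\varepsilon}(\Phi_n)D\Phi_n$, and \eqref{lemma:pospartapprox:eq1} comes from \eqref{prop:propertiesApprox:cons6}. The one difference is your mollification of $g_\varepsilon$: it rigorously justifies the second-order chain rule that the paper only asserts ``for almost all $x$'' (a genuine subtlety, since $\Phi_n$ can map a set of positive measure, e.g.\ where $\phi_i=\phi_j$, into the null set where $\nabla^2\psi_{n,\varepsilon}$ fails to exist), and you then pass semiconcavity and the Lipschitz bound on $\nabla v_{n,\varepsilon}$ to the limit instead of reading them off a.e.\ eigenvalue bounds.
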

\begin{proof}
We note that $v_{n,\varepsilon}$ is in $C^{1,1}(\overline{\Omega})$ since it is the composition of $\psi_{n,\varepsilon}\in C^{1,1}(\R^n)$ and $\Phi_n\in C^2(\overline{\Omega};\R^n)$. For $x\in\overline{\Omega}$, by the chain rule we have that
$$
 \nabla v_{n,\varepsilon}=\sum_{i=1}^n\frac{\partial\psi_{n,\varepsilon}}{\partial a_i}(\Phi_n(x))\nabla \phi_i(x),$$ and by \eqref{prop:propertiesApprox:cons3} with $a= \Phi_{n,\epsilon}(x)$ we find that
\begin{align}    
 \left| \nabla v_{n,\varepsilon}(x)\right|&=\left|\sum_{i=1}^n\frac{\partial\psi_{n,\varepsilon}}{\partial a_i}(\Phi_n(x))\nabla \phi_i(x)\right|\\[1.3ex] 
 &\leq \sum_{i=1}^n\frac{\partial\psi_{n,\varepsilon}}{\partial a_i}(\Phi_n(x))\norm{\nabla \phi_{i}}_{C(\overline{\Omega};\R^d)}\leq \max_{i\in\{1,\ldots,n\}}\norm{\nabla \phi_{i}}_{C(\overline{\Omega};\R^d)}.
 \end{align}
The asserted Lipschitz continuity of  $v_{n,\varepsilon}$ follows from this estimate.
We next argue the semiconcavity of $v_{n,\varepsilon}$ and the bound \eqref{lemma:pospartapprox:eq3}.   For this purpose, let us consider $y\in\R^d$. By the chain rule we have for almost all $x\in\Omega$ and all $i,j\in\{1,\ldots,d\}$ that
\beq
\frac{\partial v_{n,\varepsilon}}{\partial x_i\partial x_j}(x)=\sum_{k,r=1}^n\frac{\partial^2\psi_{n,\varepsilon}}{\partial a_r\partial a_k}(\Phi_n(x))\frac{\partial\phi_r}{\partial x_i}(x)\frac{\partial\phi_k}{\partial x_j}(x)+\sum_{k=1}^n\frac{\partial \psi_{n,\varepsilon}}{\partial a_k}(\Phi_n(x))\frac{\partial^2 \phi_{k}}{\partial x_i\partial x_j}(x).
\label{lemma:pospartapprox:eq2}
\eeq
We note that by \eqref{prop:propertiesApprox:cons5}, we have for almost all $x\in\Omega$ and all $y\in\R^d$, with $|y|=1$, that
\begin{equation*}
\begin{array}{l}
\displaystyle
-2n(n-1)\norm{g''}_{L^\infty(\R)}\max_{i\in\{1,\ldots,n\}}\norm{\nabla \phi_i}_{C(\overline{\Omega};\R^d)}^2\\
\ecart\displaystyle
\leq \sum_{i,j=1}^d\sum_{k,r=1}^n\frac{\partial^2\psi_{n,\varepsilon}}{\partial a_r\partial a_k}(\Phi_n(x))\frac{\partial\phi_r}{\partial x_i}(x)\frac{\partial\phi_k}{\partial x_j}(x)y_iy_j
=\sum_{k,r=1}^n z_r(x) \frac{\partial^2\psi_{n,\varepsilon}}{\partial a_r\partial a_k}(\Phi_n(x)) z_k(x) \leq 0,
\end{array}
\end{equation*}
where $z_r(x)= \sum_{i=1}^d \frac{\partial\phi_r}{\partial x_i}(x) y_i$,  $z_k(x)= \sum_{j=1}^d \frac{\partial\phi_k}{\partial x_j}(x) y_j$.
Combining this with \eqref{lemma:pospartapprox:eq2}, \eqref{prop:propertiesApprox:cons3}, and  \eqref{prop:propertiesApprox:cons4} , we obtain
$$ -C-2n(n-1)\norm{g''}_{L^\infty(\R)}L^2\leq y^{\top}\cdot\nabla^2v_{n,\varepsilon}(x)\cdot y \leq C, \text{ for all } y\in \R^d, \text{ with } |y|=1.$$
This implies that the eigenvalues of $\nabla^2v_{n,\varepsilon}(x)$ are in $[-C-2n(n-1)\norm{g''}_{L^\infty(\R)}L^2,C]$ from which we deduce \eqref{lemma:pospartapprox:eq3},  and that the semiconcavity constant of $v_{n,\varepsilon}$ is bounded by $C$, see Proposition 1.1.3 in \cite{Cannarsa2004}.

To conclude the proof, we note that \eqref{lemma:pospartapprox:eq1} is a direct consequence of \eqref{prop:propertiesApprox:cons6}.
\end{proof}

We are now in position to introduce our semiconcavity preserving parametrization. With Proposition \ref{prop:discreteRep} and \eqref{eq:kk1} in mind, for the purpose of numerical realization, it remains to choose an approximation of the family of functions $\phi_i$ in order achieve a finite dimensional semiconcave parametrization for functions $v$. For this purpose, let us consider a finite dimensional Banach space $\Theta$ equipped with a norm $\norm{\cdot}_{\Theta}$ and a continuous function $\xi:\Theta\mapsto C^2(\overline{\Omega})$. For $n\in\N$ and $\theta=(\theta^1,\ldots,\theta^n)\in \Theta^n$, we set $\Xi_{n}(\theta)=(\xi(\theta^1),\ldots,\xi(\theta^n))$. Thus, for $\theta\in\Theta^n$ we have that $\Xi_n(\theta)$ acts as a family of smooth functions which  parameterizes $\Phi$.  We equip $\Theta^n$ with the supremum norm, namely, for $\theta\in\Theta^n$ we use  $\norm{\theta}_{\Theta^n}=\sup_{i=1,\ldots,n}\norm{\theta_i}_{\Theta}$. We call the tuple  $(\Theta,\xi)$ a setting.

We propose the following semiconcavity preserving parametrization
\beq  \varv_{n,\varepsilon}(\theta)=\psi_{n,\varepsilon}\circ \Xi_n(\theta),\label{def:parametrization}\eeq
with $\varepsilon\geq 0.$ If $\varepsilon=0$ we drop the $\varepsilon$ from the sub-index and write
$$\varv_{n}(\theta)=\psi_{n}\circ \Xi_n(\theta).$$
By \Cref{lemma:pospartapprox} it is clear that for $\theta\in\Theta^n$ fixed the  semiconcavity constant of $\varv_{n,\varepsilon}(\theta)$ will be bounded by
\beq
    \max_{i\in\{1,\ldots,n\}}\norm{\nabla ^2\xi(\theta^i)}_{C(\overline{\Omega};\R^{d\times d})}.
\label{SemiconcavityConsApprox}
\eeq
Therefore, controlling this quantity is crucial for preserving  semiconcavity.

In section \ref{sec:approximation} the convergence properties of $\varv_{n,\varepsilon}$ are studied with respect to a sequence of settings $\{(\Theta^m,\xi^m)\}_{m\in\N}$ as $\varepsilon\to 0^+$ and $n\to\infty$.

\section{Probabilistic interpretation and active sets}

\label{sec:ProbInt}
In this section we give a probabilistic interpretation of the parametrization introduced in \Cref{sec:parametrization}. To this end, we consider a family of functions $\{\phi_i\}_{i=1}^n\subset C^2(\overline{\Omega})$ and the functions $v_n$ and $v_{n,\varepsilon}$ as in \eqref{eq:kk1}. The interpretation consists in setting for each $a\in\R^n$ a probability $\{p_{n,i,\varepsilon}(a)\}_{i=1}^{n}$ over the indices $\{1,\ldots,n\}$ by using the partial derivatives of $\psi_n$. We will see that in the limit as $\varepsilon\to 0^+$, the probability $\{p_{n,i,\varepsilon}(a)\}_{i=1}^{n}$ has support on the active set of indices $\{i\in\N:\ 1\leq i \leq n,\ \psi_n(a)=a_i  \}$ and we will provide hypotheses under which an explicit formula for the limit probabilities exists. This will help to characterize the \emph{active sets} for $v_{n,\varepsilon}$ and $v_n$ which are the sets where $\phi_i$ is equal to $v_n$,  and where $\nabla \phi_i$ contributes the most to the gradient of $v_n$.

The relevance of this probabilitic interpretation and the active sets described above lies in the fact that the discontinuities of $\nabla v_n$ occur in the boundary of the active sets, thus understanding the behavior of the probabilities and the actives sets is important regarding the characterization of the discontinuities of the gradient of $v_n$.  Further, a correct representation of the active sets by the approximation $v_{n,\varepsilon}$ is crucial for capturing the correct behavior of $v_{n}$ and its gradient around the discontinuities. 

The behavior of the gradients  is particularly importance if
$v_n$ represents the value function of an optimal control problem. In this case, the optimal control in feedback form can be expressed by means of the gradient of the value function. We explain this in more details in  Remark \ref{rem:prob:HJB}.

We commence by observing  that
\beq p_{n,j,\varepsilon}(a)=\frac{\partial \psi_{n,\varepsilon}}{\partial a_i}(a) \text{ for  } a \in \R^n \text{ and }j\in\{1,\ldots,n\}.
\label{def:prob}
\eeq
By\eqref{prop:propertiesApprox:cons3} the family $\{ p_{n,j,\varepsilon}(a) \}_{j=1}^n$ can be interpreted as a probability distribution over the indices. Furthermore, by the chain rule we have for $x\in\Omega$ that
\begin{equation}\label{eqkk4}
 \nabla v_{n,\varepsilon}(x)=\sum_{i=1}^n \frac{\partial\psi_{n,\varepsilon}}{\partial a_i}(\Phi_n(x))\nabla \phi_i(x)=\sum_{i=1}^n p_{n,i,\varepsilon}(\Phi_n(x))\nabla \phi_i(x),
 \end{equation}
from where we see that $p_{n,i,\varepsilon}(\Phi_n(x))$ measure the contribution of $\nabla \phi_i(x)$ in the summation and also that $\nabla v_{n,\varepsilon}(x)$ correspond to the average of $\{\nabla \phi_i(x)\}_{i=1}^{n}$ weighted by $p_{n,i,\varepsilon}(\Phi_n(x))$.

It is noteworthy that by \eqref{defi:PsiAp} the following recursive relations hold
\beq
 p_{m,j,\varepsilon}(a)=\delta_{m,j}-g_{\varepsilon}'\left(a_m-\psi_{m-1,\varepsilon}(a)\right)\left(\delta_{m,j}
 -p_{m-1,j,\varepsilon}(a)\right)\text{ for }j\in\{1,\ldots,n\}\text{ and }m\in \{2,\ldots,n\}
\eeq
and 
\beq 
 p_{1,j,\varepsilon}(a)=\delta_{1,j} \text{ for }j\in\{1,\ldots,n\},
\eeq
for $a\in\R^n$, where it is understood that $p_{i,j,\varepsilon}=0$ for $i\in\{1,\ldots,n-1\}$ and $j\in\{i+1,\ldots,n\}$.

 We can iterate these relations to obtain
\beq 
p_{m,j,\varepsilon}(a)=\left(\prod_{i=j+1}^{m}g'_{\varepsilon}\left(a_{i}-\psi_{i-1,\varepsilon}(a)\right)\right)\left(1-g'_{\varepsilon}(a_j-\psi_{j-1,\varepsilon}(a))\right)
\label{eq:recursive:1}
\eeq 
for all $m\in\{2,\ldots,n\}$ and $j\in \{1,\ldots,m-1\}$, and 
\beq 
p_{j,j,\varepsilon}(a)=\left(1-g'_{\varepsilon}(a_j-\psi_{j-1,\varepsilon}(a))\right).
\label{eq:recursive:2}
\eeq 
Next we address the limiting behavior of the probability distributions $p_{n,j,\varepsilon}(a)$  as $\varepsilon$ tends to $0$. For this will make use of the following assuption on the family of functions $\{g_{\varepsilon}\}_{\varepsilon>0}$ which approximates the positive part:
\beq g_{\varepsilon}(r)\leq (r)_+\mbox{ for all }r\in\R, \label{lemma:prob:hyp0}\eeq
and 
\beq\text{ there exits }s_0\text{ satisfying }\lim_{\varepsilon\to 0^+}\sup_{s\in [-s_0,0]}|g'_{\varepsilon}(s)|=0.\label{lemma:prob:hyp1}\eeq

For this we will need the following technical lemma:
\begin{lemma}
\label{lemma:upperboundPsi}
Let $\{g_{\varepsilon}\}_{\varepsilon>0}$ be a family of $C^{1,1}(\R)$ functions satisfying \eqref{hypo:positivepart} and \eqref{lemma:prob:hyp0}. Then for all $m\in\{1,\ldots,n\}$ we have
\beq
\psi_{m}(a)\leq\psi_{m,\varepsilon}(a) \mbox{ for }a\in\R^n\mbox{ and all }\varepsilon\in(0,\infty).
\eeq
\end{lemma}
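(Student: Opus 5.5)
We argue by induction on $m$, comparing the exact recursion \eqref{def:Psi} with the regularized one \eqref{defi:PsiAp}. For $m=1$ there is nothing to prove, since $\psi_{1}(a)=\psi_{1,\varepsilon}(a)=a_1$ for every $a\in\R^n$ and every $\varepsilon>0$.

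For the inductive step, assume $\psi_{m}(a)\leq\psi_{m,\varepsilon}(a)$ for some $m\in\{1,\ldots,n-1\}$. Using the two recursions we write
\[
\psi_{m+1,\varepsilon}(a)-\psi_{m+1}(a)=\left(a_{m+1}-\psi_{m}(a)\right)_+-g_{\varepsilon}\left(a_{m+1}-\psi_{m,\varepsilon}(a)\right),
\]
and it suffices to show that the right-hand side is nonnegative. This follows from two monotonicity facts. First, by \eqref{lemma:prob:hyp0} we have $g_{\varepsilon}(r)\leq (r)_+$ for $r=a_{m+1}-\psi_{m,\varepsilon}(a)$. Second, the inductive hypothesis gives $a_{m+1}-\psi_{m,\varepsilon}(a)\leq a_{m+1}-\psi_{m}(a)$. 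Since the positive part is nondecreasing, this yields $(a_{m+1}-\psi_{m,\varepsilon}(a))_+\leq (a_{m+1}-\psi_{m}(a))_+$. Chaining the two inequalities gives
\[
g_{\varepsilon}\left(a_{m+1}-\psi_{m,\varepsilon}(a)\right)\leq \left(a_{m+1}-\psi_{m}(a)\right)_+,
\]
which is exactly the claim $\psi_{m+1}(a)\leq\psi_{m+1,\varepsilon}(a)$.

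The induction is elementary, and the only point that needs care is the direction of the monotonicity. The argument uses monotonicity of $(\cdot)_+$ rather than of $g_\varepsilon$, which is convenient: \eqref{lemma:prob:hyp0} lets us replace $g_\varepsilon$ by $(\cdot)_+$ at the smaller argument first, and only then compare arguments. Alternatively, $g_\varepsilon$ is itself nondecreasing by \eqref{hypo:positivepart:2}, so the two inequalities could also be applied in the other order. The argument is pointwise in $a$ and uses no assumption on $\varepsilon$ beyond \eqref{lemma:prob:hyp0}, so the conclusion holds for all $a\in\R^n$ and all $\varepsilon\in(0,\infty)$.
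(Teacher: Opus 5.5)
Your proof is correct and is essentially the paper's argument: induction on $m$, using \eqref{lemma:prob:hyp0} to bound $g_\varepsilon(a_{m+1}-\psi_{m,\varepsilon}(a))$ by the positive part, then the monotonicity of $(\cdot)_+$ together with the inductive hypothesis $\psi_m(a)\leq\psi_{m,\varepsilon}(a)$. Your inductive step over $m\in\{1,\ldots,n-1\}$ is also correctly indexed; the paper's write-up starts it at $m=2$, which skips the step from $m=1$ to $m=2$.
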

\begin{proof}
We proceed by induction. Let us consider $a\in\R^n$ fixed. The base case $m=1$ is trivial since $\psi_{1,\varepsilon}(a)=a_1=\psi_1(a)$. Let us assume that the claim holds for $m\in\{2,\ldots,n-1\}$, that is, $\psi_{m}(a)\leq \psi_{m,\varepsilon}(a)$ . By \eqref{lemma:prob:hyp0} we have that
$$ \psi_{m+1,\varepsilon}(a)\geq a_{m+1}-(a_{m+1}-\psi_{m,\varepsilon}(a))_+.$$
By the monotonicity of the positive part we have that $(a_{m+1}-\psi_{m,\varepsilon})_+\leq (a_{m+1}-\psi_{m})_+$, due to the fact that $\psi_{m}(a)\leq \psi_{m,\varepsilon}(a)$. From this we deduce that $\psi_{m+1,\varepsilon}(a)\geq \psi_{m+1}(a)$, which proves the result.

\end{proof}

In order to simplify the notation in what follows, for $a\in\R^n$, we define the set of active indices  $I_{n}(a)\subset\{1,\ldots,n\}$ as the argmin over the family $\{{\phi_i(a)}\}_{i=1}^n$, and $\hat{i}_{n}(a)\in \{1,\ldots,n\}$ as the largest active index, that is, 
\beq 
I_{n}(a)=\{i\in\{1,\ldots,n\}: \psi_{n}(a)=a_i\},\text{ and }\hat{i}_{n}(a)=\max I_n(a).
\eeq

\begin{prop} \label{lemma:prob}
Let $g_{\varepsilon}\in C^{1,1}(\R)$ with $\varepsilon >0$   be a family of functions satisfying \eqref{hypo:positivepart}. Then for $a\in \R^n$ and all $j\notin I_{n}(a)$ we have
\beq
\lim_{\varepsilon\to 0^+}p_{n,j,\varepsilon}(a)=0.
\label{lemma:prob:cons1}
\eeq
If $I_{n}(a)$ is a singleton, i.e. $I_{n}(a)=\{\hat{i}_{n}(a)\}$, then
\beq \lim_{\varepsilon\to 0^+}p_{n,\hat{i}_{n}(a),\varepsilon}(a)=1.
 \label{lemma:prob:cons3}
\eeq
Further, if $g_{\varepsilon}$ satisfies \eqref{lemma:prob:hyp0} and \eqref{lemma:prob:hyp1}, then for all $j\in\{1,\ldots,n\}$ we have
\beq  \lim_{\varepsilon\to 0^+}p_{n,j,\varepsilon}(a)=\left\{
\begin{array}{ll}
1 & \mbox{if }j=\hat{i}_{n}(a)
 \\
0 & \mbox{otherwise}.
\end{array}
\right. \label{lemma:prob:cons2}\eeq
\end{prop}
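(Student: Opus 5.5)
The plan is to work directly with the product formulas \eqref{eq:recursive:1}--\eqref{eq:recursive:2} for $p_{n,j,\varepsilon}(a)$, with $a\in\R^n$ fixed. Every factor is either $g'_\varepsilon(\cdot)$ or $1-g'_\varepsilon(\cdot)$, hence lies in $[0,1]$ by \eqref{hypo:positivepart:2}. So it suffices to exhibit a single factor that tends to $0$ (for a limit $0$), or to show that all factors tend to $1$ (for a limit $1$).

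The key preliminary fact is $\psi_{i,\varepsilon}(a)\to\psi_i(a)$ as $\varepsilon\to0^+$ for every $i$. This follows from \eqref{prop:propertiesApprox:cons6} applied to the first $i$ coordinates, which gives $|\psi_{i,\varepsilon}(a)-\psi_i(a)|\le (i-1)\varepsilon$. Consequently the arguments $r_{i,\varepsilon}:=a_i-\psi_{i-1,\varepsilon}(a)$ converge to $r_i:=a_i-\psi_{i-1}(a)$. Whenever $r_i\neq0$, \eqref{hypo:positivepart:5} (locally uniform convergence away from $0$) gives $g'_\varepsilon(r_{i,\varepsilon})\to1$ if $r_i>0$ and $\to0$ if $r_i<0$.

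\emph{Proof of \eqref{lemma:prob:cons1}.} Let $j\notin I_n(a)$, so $a_j>\psi_n(a)$, and let $m$ be the smallest active index.
\begin{itemize}
\item If $m>j$, then $\psi_{m-1}(a)>\psi_n(a)=a_m$, so $r_m<0$. The factor $g'_\varepsilon(r_{m,\varepsilon})$ appears in the product for $p_{n,j,\varepsilon}$ (since $j+1\le m\le n$) and tends to $0$.
\item If $m<j$, then $\psi_{j-1}(a)=\psi_n(a)<a_j$, so $r_j>0$. Hence the factor $1-g'_\varepsilon(r_{j,\varepsilon})$ tends to $0$. The case $j=1$ cannot fall here, since it has $m>j$ automatically.
\end{itemize}
\emph{Proof of \eqref{lemma:prob:cons3}.} This follows at once: by \eqref{prop:propertiesApprox:cons3} the $p_{n,j,\varepsilon}(a)$ sum to $1$, and all indices other than $\hat i_n(a)$ are inactive, so their contributions vanish by \eqref{lemma:prob:cons1}.

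\emph{Proof of \eqref{lemma:prob:cons2}.} The main obstacle is that $r_{\hat i}=0$ for $\hat i=\hat i_n(a)$ whenever $\hat i>1$, so \eqref{hypo:positivepart:5} says nothing about $g'_\varepsilon(r_{\hat i,\varepsilon})$. Here the extra hypotheses enter. \Cref{lemma:upperboundPsi} (which uses \eqref{lemma:prob:hyp0}) gives $\psi_{\hat i-1,\varepsilon}(a)\ge\psi_{\hat i-1}(a)$, hence $r_{\hat i,\varepsilon}\le r_{\hat i}=0$. Since also $r_{\hat i,\varepsilon}\to0$, we have $r_{\hat i,\varepsilon}\in[-s_0,0]$ for small $\varepsilon$, and \eqref{lemma:prob:hyp1} yields $g'_\varepsilon(r_{\hat i,\varepsilon})\to0$.

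With this in hand, the limits follow.
\begin{itemize}
\item For an active $j<\hat i$, the product \eqref{eq:recursive:1} contains the factor $g'_\varepsilon(r_{\hat i,\varepsilon})\to0$, so $p_{n,j,\varepsilon}(a)\to0$.
\item Inactive $j$ are already covered by \eqref{lemma:prob:cons1}.
\item For $j=\hat i$, the remaining factors are as follows. The factor $1-g'_\varepsilon(r_{\hat i,\varepsilon})\to1$ (absent if $\hat i=1$). For each $i>\hat i$, the index $i$ is inactive with $\psi_{i-1}(a)=\psi_n(a)<a_i$, so $r_i>0$ and $g'_\varepsilon(r_{i,\varepsilon})\to1$.
\end{itemize}
Thus $p_{n,\hat i,\varepsilon}(a)\to1$; alternatively, this last limit also follows from the sum-to-one property in \eqref{prop:propertiesApprox:cons3}.
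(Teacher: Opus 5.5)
Your proposal is correct and follows essentially the paper's route. Like the paper, you use the product formulas \eqref{eq:recursive:1}--\eqref{eq:recursive:2}, a two-case treatment of inactive indices, the sum-to-one property for \eqref{lemma:prob:cons3}, and \Cref{lemma:upperboundPsi} with \eqref{lemma:prob:hyp1} to make the critical factor $g'_\varepsilon(a_{\hat i}-\psi_{\hat i-1,\varepsilon}(a))$ vanish. Your split by the smallest active index is equivalent to the paper's split on $a_j\le\psi_{j-1}(a)$ versus $a_j>\psi_{j-1}(a)$.

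There is one small slip. It is not true that $a_{\hat i}-\psi_{\hat i-1}(a)=0$ whenever $\hat i>1$; this holds only when $I_n(a)$ has at least two elements. If $\hat i>1$ is the only active index, this quantity is negative and $r_{\hat i,\varepsilon}\not\to 0$. Nothing breaks, though: that case is already settled by your preliminary fact (or by \eqref{lemma:prob:cons3}). Alternatively, the monotonicity \eqref{hypo:positivepart:3} gives $0\le g'_\varepsilon(r_{\hat i,\varepsilon})\le g'_\varepsilon(0)\to 0$ in every case at once.
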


\begin{proof}
Let us start by proving \eqref{lemma:prob:cons1} and choose some $j \in  \{1,\ldots,n\}\setminus I_{n}(x)$. We consider separately the cases $a_j\leq \psi_{j-1}(a)$ and $a_j> \psi_{j-1}(a)$. In the first case, since $j\notin I_n(a)$, there exist $i\in\{j+1,\ldots,n\}$ such that $a_i<\psi_{i-1}(a)$, otherwise we have that $a_j=\psi_{n}(a)$ which is a contradiction. Then by \eqref{prop:propertiesApprox:cons6} and \eqref{hypo:positivepart:5}, we have 
$$\lim_{\varepsilon\to 0^+}g'_{\varepsilon}(a_i-\psi_{i-1,\varepsilon}(a))=0, $$
which by means of \eqref{eq:recursive:1} implies that 
$\lim_{\varepsilon\to 0^+}p_{n,j,\varepsilon}(a)=0.$
On the other hand, if   $a_j> \psi_{j-1}(a)$, there exists $\delta>0$ such that \beq a_j-\psi_{j-1}(a)>\delta.
\label{lemma:prob:eq10}
\eeq By \eqref{prop:propertiesApprox:cons6} we have that for all $\varepsilon\in \left(0,\frac{\delta}{2(j-2)}\right)$ the estimate
\beq |\psi_{j-1}(a)-\psi_{j-1,\varepsilon}(a)|\leq \frac{\delta}{2}.
\label{lemma:prob:eq11}
\eeq
Combining \eqref{lemma:prob:eq10} and \eqref{lemma:prob:eq11},
 we arrive at 
\beq \frac{\delta}{2}\leq a_j-\psi_{j-1,\varepsilon}(a)<a_j-\psi_{j-1}(a)+\frac{\delta}{2},\text{ for all }\varepsilon\in \left(0,\frac{\delta}{2(j-2)}\right).
\label{lemma:prob:eq12}
\eeq
This implies that $\{a_j-\psi_{j-1,\varepsilon}(a)\}_{\varepsilon\in J}$, with $J=\left(0,\frac{\delta}{2(j-2)}\right)$ lies in a compact subset of $\R\setminus\{0\}$. Using this, \eqref{prop:propertiesApprox:cons6}, and \eqref{hypo:positivepart:5}, we get that 
$$\lim_{\varepsilon\to 0^+}g'_{\varepsilon}(a_j-\psi_{j-1,\varepsilon}(a))=\chi_{[0,\infty)}(a_j-\psi_{j-1}(a))=1. $$
Combining this with  \eqref{eq:recursive:1}, we again obtain \eqref{lemma:prob:cons1}.

We turn to the verification of \eqref{lemma:prob:cons3} and assume that assume that $I_{n}(a)=\{\hat{i}_{n}(a)\}$.  Then  by \eqref{lemma:prob:cons1} we know that
$$\lim_{\varepsilon\to 0^+}\sum_{j\in \{1,\ldots,n\}\setminus I_n(a)}p_{n,j,\varepsilon}(a)=0.$$
Since $\{p_{n,j,\varepsilon}(a)\}_{i=1}^n$ sum up one, 
$\lim_{\varepsilon\to 0^+}p_{n,\hat{i}(a),\varepsilon}(a)=1-\lim_{\varepsilon\to 0^+}\sum_{j\in \{1,\ldots,n\}\setminus I_n(a)}p_{n,j,\varepsilon}(a)=1$ follows.
This proves \eqref{lemma:prob:cons3}.

We turn our attention now to the proof of \eqref{lemma:prob:cons2}. Let assume that \eqref{lemma:prob:hyp0} and \eqref{lemma:prob:hyp1} hold. We note that given the fact  $\{p_{n,j,\varepsilon}(a)\}_{j=1}^n$ is a probability distribution, we only need to prove that  \beq \lim_{\varepsilon\to 0^+}p_{n,\hat{i}_n(a),\varepsilon}(a)=1, 
\label{lemma:prob:eq13}
\eeq
to demonstrate \eqref{lemma:prob:cons2}.  By \Cref{lemma:upperboundPsi} we know that
$$a_{\hat{i}_n(a)}=\psi_{\hat{i}_n(a)}(a)\leq \psi_{\hat{i}_n(a)-1}(a)\leq \psi_{\hat{i}_n(a)-1,\varepsilon}(a).$$
Then $a_{\hat{i}_n(a)}-\psi_{\hat{i}_n(a)-1,\varepsilon}(a)\leq 0$ for  $\varepsilon>0$. Combining this with \eqref{lemma:prob:hyp1}, \eqref{hypo:positivepart:3},  and \eqref{hypo:positivepart:5} (in the case that $a_{\hat{i}_n(a)}-\psi_{\hat{i}_n(a)-1}(a)<s_0$) we obtain
\beq \lim_{\varepsilon\to 0^+}g'_{\varepsilon}(a_{\hat{i}_n(a)}-\psi_{\hat{i}_n(a)-1,\varepsilon})(a)=0.
\label{lemma:prob:eq14}
\eeq
If $\hat{i}_{n}(a)=n$, this together with \eqref{eq:recursive:2} proves \eqref{lemma:prob:eq13}. On the other hand, if $\hat{i}_n(a)<n$, we can make use of \eqref{eq:recursive:1} instead. To this end, it is important to observe that for each $j\in\{1,\ldots,n\}$ strictly larger than $\hat{i}_n(a)$, we have, by the definition of  $\hat{i}_n(a)$, that $j\not\in I_n(a)$ and therefore arguing as in the proof of \eqref{lemma:prob:cons3} we have that 
$$\lim_{\varepsilon\to 0^+}\prod_{j=\hat{i}_n(a)+1}^n g'_{\varepsilon}(a_j-\psi_{j-1,\varepsilon}(a))=1.$$
Combining this with \eqref{lemma:prob:eq14}, and \eqref{eq:recursive:2} we obtain that \eqref{lemma:prob:eq13} holds.
\end{proof}

\begin{cor}
 If \eqref{lemma:prob:hyp0} and \eqref{lemma:prob:hyp1} hold, then for each $x\in \Omega$  all the probability is assigned to $\hat{i}_n(\Phi_n(x))$ and consequently $\nabla v_{n,\varepsilon}(x)$ converges to $\nabla \phi_{\hat{i}_n(\Phi_n(x))}(x)$ as $\varepsilon\to 0^+$.
 
\end{cor}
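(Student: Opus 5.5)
The corollary is a pointwise consequence of Proposition \ref{lemma:prob} combined with the chain-rule representation \eqref{eqkk4}. I would fix $x\in\Omega$ and set $a=\Phi_n(x)\in\R^n$. Since $\Phi_n$ does not depend on $\varepsilon$, the vector $a$ is fixed as $\varepsilon\to 0^+$. This makes Proposition \ref{lemma:prob} directly applicable at that single point, with no uniformity in $a$ required.

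\textbf{Step 1: limiting weights.} Under \eqref{lemma:prob:hyp0} and \eqref{lemma:prob:hyp1}, the third part of Proposition \ref{lemma:prob}, namely \eqref{lemma:prob:cons2}, gives
\[
p_{n,j,\varepsilon}(\Phi_n(x))\to 1 \text{ if } j=\hat{i}_n(\Phi_n(x)), \qquad p_{n,j,\varepsilon}(\Phi_n(x))\to 0 \text{ otherwise}.
\]
This is precisely the statement that all the probability concentrates on the largest active index $\hat{i}_n(\Phi_n(x))$.

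\textbf{Step 2: limiting gradient.} By \eqref{eqkk4} we have
\[
\nabla v_{n,\varepsilon}(x)=\sum_{i=1}^n p_{n,i,\varepsilon}(\Phi_n(x))\nabla\phi_i(x).
\]
This is a finite sum with $n$ fixed. The vectors $\nabla\phi_i(x)$ do not depend on $\varepsilon$, and each coefficient has a limit by Step 1. I would therefore pass to the limit termwise and obtain
\[
\lim_{\varepsilon\to0^+}\nabla v_{n,\varepsilon}(x)=\nabla\phi_{\hat{i}_n(\Phi_n(x))}(x).
\]
For an explicit estimate, I would write
\[
\Bigl|\nabla v_{n,\varepsilon}(x)-\nabla\phi_{\hat i}(x)\Bigr| \le \Bigl(\bigl|1-p_{n,\hat i,\varepsilon}\bigr|+\sum_{j\ne\hat i}p_{n,j,\varepsilon}\Bigr)\max_i|\nabla\phi_i(x)|,
\]
where $\hat i=\hat{i}_n(\Phi_n(x))$ and the weights are evaluated at $\Phi_n(x)$. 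The right-hand side tends to $0$ by Step 1.

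There is essentially no obstacle here. The only point to check is that the point-evaluation argument of Proposition \ref{lemma:prob} applies with $a=\Phi_n(x)$. It does, because that proposition holds for every $a\in\R^n$ and its hypotheses concern only $g_\varepsilon$. The index $\hat{i}_n(\Phi_n(x))$ refers to the active set of $a=\Phi_n(x)$, that is, the largest $i$ with $\phi_i(x)=v_n(x)$.
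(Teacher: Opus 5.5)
Your proposal is correct and matches the paper's argument: the corollary is stated without a separate proof, as the immediate consequence of applying \eqref{lemma:prob:cons2} at the fixed point $a=\Phi_n(x)$ and passing to the limit in the finite weighted sum \eqref{eqkk4}, which is the same two-step deduction the paper writes out in Remark \ref{rem:prob:HJB}. Your explicit bound via $|1-p_{n,\hat i,\varepsilon}|+\sum_{j\neq\hat i}p_{n,j,\varepsilon}$ is a correct quantitative refinement that is not needed for the qualitative statement.
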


\begin{rem}
\label{rem:prob:HJB}
The property described in the previous corollary is particularly convenient for the value function in optimal control  as we proceed to explain. Let $v_{n}$ be a viscosity solution of an equation of the form
\beq  F(x,\nabla v(x))=0 \mbox{ in }\Omega\label{rem:prob:HJB:eq1}\eeq with $F$ continuous, and convex in the second argument. For $x\in\Omega$ let
\begin{equation*}
\begin{array}{rl} D^*v_n(x):=&\{ p\in\R^d: p=\lim_{k\to\infty}\nabla v_n(x_k),\ v_n \mbox{ differentiable at }x_k \mbox{ for all }k\in\N, \\
&\text{and } x= \lim_{k\to \infty }  x_k   \}.\label{rem:prob:HJB:eq2}
\end{array}
\end{equation*}
We recall  that  $v_n$ is semiconcave and hence $D^*v_n(x)$ is nonempty for each $x\in\Omega$, see \cite[Proposition 3.3.4(c), Theorem 3.3.6]{Cannarsa2004}. Then ,  $v_n$ satisfies \eqref{rem:prob:HJB:eq1} a.e. and we have that $F(x,p)=0$ for all $x\in\Omega$ and all $p\in D^*v_n(x)$. Here we use the continuity of $F$ and \cite[Remark 5.4.1]{Cannarsa2004}.
In particular, if for all $x\in\Omega$ and $i\in\{j\in\N: 1\leq j\leq n,\ \phi_j(x)=v_n(x)\}$ it holds
\beq \nabla \phi_{i}(x)\in D^{*} v_n(x), \label{rem:prob:HJB:eq3}\eeq
then \beq F(x,\nabla \phi_i(x))=0 \mbox{ for all } i\in\{1,\ldots,n\} \mbox{ such that }v_n(x)=\phi_i(x).
\label{rem:prob:HJB:eq5}
\eeq
In particular, assume that \eqref{rem:prob:HJB:eq3} holds with $i=\hat i_n(\Phi_n(x))$, i.e.
\beq \nabla \phi_{\hat i_n(\Phi_n(x))}(x)\in D^{*} v_n(x), \label{rem:prob:HJB:eq55}\eeq
for all $x\in \Omega$.
In this situation assigning all the probability to only one of the active functions is advantageous  for the asymptotic behavior of the $\varepsilon-$approximation. Indeed,  by \eqref{lemma:prob:cons2} and \eqref{eqkk4} we have
$$\lim_{\varepsilon\to 0^{+}}\nabla v_{n,\varepsilon}(x)=\nabla \phi_{\hat{i}_n(\Phi_n(x))}(x),\mbox{ for all }x\in\Omega,$$
which together with the continuity of $F$ and \eqref{rem:prob:HJB:eq55} implies
\beq \lim_{\varepsilon\to 0^+}F(x,\nabla v_{n,\varepsilon}(x))=F(x,\nabla \phi_{\hat{i}_n(\Phi_n(x))}(x))=0, \text{ for all }x\in\Omega. \label{rem:prob:HJB:eq4}\eeq
If \eqref{lemma:prob:hyp0} and \eqref{lemma:prob:hyp1} do not hold, the limit of $\nabla v_{n,\varepsilon}(x)$ may lie in $ D^+ v_n(x) \setminus D^* v_n(x)$  and  by Remark 5.4.1 in \cite{Cannarsa2004}[Chapter 5] we can only ensure
$$\lim_{\varepsilon\to 0^+} F(x,\nabla v_{n,\varepsilon}(x))\leq 0.$$ We shall return to this case in Remark \ref{rem:kk2} below.
\end{rem}
\begin{rem}
\label{rem:counterexample}
 It is important to observe that condition  \eqref{rem:prob:HJB:eq3} does not hold in general. If $n=2$,  then it holds trivially for arbitrary $d$, however if $n\geq 3$ it is possible to construct a counter example as follows. Let $\phi_1(x)=-x$, $\phi_{2}(x)=\exp(x)-1$, $\phi_3(x)=-x^3$ and $v_3(x)=\min_{i\in\{1,2,3\}}\phi_i(x)$ for $x\in\R$. 

We have that
$$v_{3}(x)=\left\{\begin{array}{ll}
\exp(x)-1& \mbox{ if }x\leq 0\\
-x &\mbox{ if }x\in (0,1]\\
-x^3 &\mbox{ if }x>1
\end{array}\right.$$
as is depicted in Figure \ref{fig:CounterExample:unmodified}
\begin{figure}[h!]
\begin{subfigure}[b]{0.45\textwidth}
         \centering
                  \includegraphics[width=\textwidth]{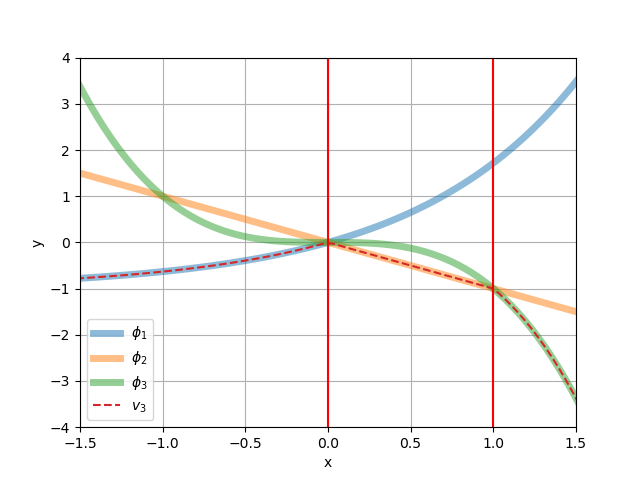}
\caption{Unmodified functions.}
\label{fig:CounterExample:unmodified}
\end{subfigure}
\begin{subfigure}[b]{0.45\textwidth}
\centering
\includegraphics[width=\textwidth]{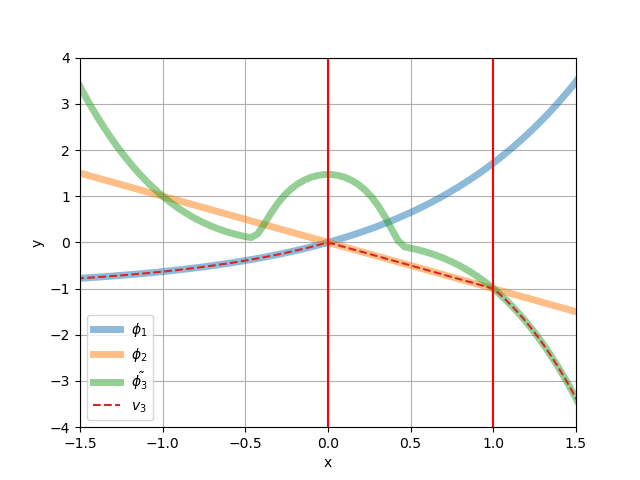}
\caption{Modified functions.}
\label{fig:CounterExample:modified}
\end{subfigure}
\caption{Illustrative example for \eqref{rem:prob:HJB:eq3}.}
\label{fig:CounterExample}
\end{figure}

At $x=0$ the three functions $\{\phi_{i}\}_{i=1}^3$ are equal to $v_{3}(0)$. Additionally, we have that 
\beq D^*v_3(0)=\{-1,1\}.
\label{rem:counterexample:eq1}
\eeq 
To prove this, we note that, for $x<0$, $v_3$ is differentiable and $v_3'(x)=\exp(x)$. Then clearly we have $\exp(0)=1$ is an element of $D^*v_3(0)$. For proving that $-1$ is an element of $D^*v_3(0)$, we note that for $x\in (0,1)$ we have that $v_{n}=-x$ and consequently $v_n'(x)=-1$ which implies that $-1\in D^*v_3(0)$. It is important to observe, that $D^*v_3(0)$ does not include any other element, since any sequence $p_{n}=v_3'(x_n)$ with $x_{n}$ converging to $0$ and $p_{n}$ converging, is such that $p_{n}=\exp(x_n)$ for all $n$ large enough or $p_{n}=-1$ for $n$ large enough.

In addition, we have that $\phi_{3}'(0)=0$ and therefore by \eqref{rem:counterexample:eq1}, it is clear that $\phi_{3}'(0)$ is not an element of $D^*(0)$.

We can remedy this issue by modifying $\phi_3$ in a neighborhood of $0$ in a way that it will not change $v_3$. For instance, let us consider $\tilde{\phi}_{3}(x)=\phi_3\left(x\right)+f(2x)$ with 
$$f(x)=\left\{ \begin{array}{ll}
\exp(-1/(1-|x|^2)) & \mbox{ if }x\in (-1,1)\\
0& \mbox{ if }|x|\geq 1
\end{array}
\right. $$
As is depicted in Figure \ref{fig:CounterExample:modified}, we have that $v_{3}(x)=\min\{\phi_{1}(x),\phi_{2}(x),\tilde{\phi}_{3}(x)\}$, but now $v_{3}(0)\neq \tilde{\phi}_{3}(0)$ and consequently the new family $\{\phi_1,\phi_2,\tilde{\phi}_3\}$ satisfies  \eqref{rem:prob:HJB:eq3}.

Using the same idea, that is, modifying the problematic $\phi_{i}$'s around the point where \eqref{rem:prob:HJB:eq3} does not hold, we show in Remark \ref{rem:FixingCounterExample} a general way to modify the family of functions $\{\phi_{i}\}_{i=1}^n$ in such a way that \eqref{rem:prob:HJB:eq3} is fulfilled everywhere.
\end{rem}

In view of Proposition \ref{lemma:prob}   with  \eqref{lemma:prob:hyp0} and \eqref{lemma:prob:hyp1} holding, the expression
\beq p_{n,i}(a):=\lim_{\varepsilon\to 0^+} p_{n,i,\varepsilon}(a)\in \{0,1\},\label{eq:kk10}\eeq
is welldefined for each $a\in\R^n$ and $i\in\{1,\ldots,n\}$.
Now for each $i=1,\ldots,n$ we define the  $\Omega^{i}$ and their corresponding approximations by $\Omega^{i,\varepsilon}$ are defined by
\beq \Omega^{i}=\{x\in\overline{\Omega}: p_{n,i}(\Phi(x))=\max_{j=1,\ldots,n}p_{n,j}(\Phi(x))\},
\label{def:acriveset}
\eeq
\beq \Omega^{i,\varepsilon}=\{x\in\overline{\Omega}: p_{n,i,\varepsilon}(\Phi(x))=\max_{j=1,\ldots,n}p_{n,j,\varepsilon}(\Phi(x))\}.
\label{def:acriveset:approx}
\eeq
We refer to the sets $\{\Omega^{i}\}_{i=1}^n$ respectively $\{\Omega^{i,\varepsilon}\}_{i=1}^n$ as gradient active sets, since if $x\in \Omega^{i,\varepsilon}$, then $\phi_{i}$ is the function with the largest contribution to $\nabla v_{n,\varepsilon}(x)$ for $\varepsilon$ sufficiently small. We also observe that $ \overline{\Omega}= \cup_{i=1}^n \Omega^i$. The following convergence result with respect to the gradient active sets can be obtained.
\begin{prop}
\label{lemma:gradientActiveSetConv}
Let $\{g_{\varepsilon}\}_{\varepsilon>0}\subset C^{1,1}(\R)$ be a family of functions satisfying \eqref{hypo:positivepart}, \eqref{lemma:prob:hyp0}, and \eqref{lemma:prob:hyp1}. Then for every $x\in\overline{\Omega}$ we have that
$$\lim_{\varepsilon\to 0^+}\chi_{\Omega_i,\varepsilon}(x)=\chi_{\Omega_i}(x), $$
where  $\chi_{\omega}$ denotes the characteristic function of $\omega\subset\R^d$.
\end{prop}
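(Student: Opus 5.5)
Fix $x\in\overline{\Omega}$ and set $a=\Phi_n(x)$. By Proposition \ref{lemma:prob}, under \eqref{hypo:positivepart}, \eqref{lemma:prob:hyp0} and \eqref{lemma:prob:hyp1}, we have $p_{n,j,\varepsilon}(a)\to p_{n,j}(a)$ for every $j$, with $p_{n,\hat i_n(a)}(a)=1$ and $p_{n,j}(a)=0$ for $j\neq \hat i_n(a)$. Hence $\max_j p_{n,j}(a)=1$ is attained only at $j=\hat i_n(a)$. So $\Omega^i=\{x:\hat i_n(\Phi_n(x))=i\}$ and $\chi_{\Omega^i}(x)=\delta_{i,\hat i_n(a)}$. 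It remains to show that $\chi_{\Omega^{i,\varepsilon}}(x)$ eventually equals this value as $\varepsilon\to0^+$.

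We use the strict gap in the limit. Set $\hat\imath=\hat i_n(a)$. Since $p_{n,\hat\imath,\varepsilon}(a)\to1$ and $p_{n,j,\varepsilon}(a)\to0$ for $j\ne\hat\imath$, there is $\varepsilon_0>0$ such that for $\varepsilon\in(0,\varepsilon_0)$ we have $p_{n,\hat\imath,\varepsilon}(a)>\tfrac12>p_{n,j,\varepsilon}(a)$ for all $j\neq\hat\imath$. The sum-to-one property \eqref{prop:propertiesApprox:cons3} also forces this, but the individual limits already suffice. Consequently, for such $\varepsilon$ the maximum of $\{p_{n,j,\varepsilon}(a)\}_j$ is attained uniquely at $\hat\imath$.

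This gives $x\in\Omega^{\hat\imath,\varepsilon}$ and $x\notin\Omega^{j,\varepsilon}$ for $j\neq\hat\imath$. Therefore $\chi_{\Omega^{i,\varepsilon}}(x)=\delta_{i,\hat\imath}=\chi_{\Omega^i}(x)$ for all $\varepsilon<\varepsilon_0$, which yields the pointwise convergence for every $i$.

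There is no real obstacle here; the substance lies in \eqref{lemma:prob:cons2}. The only care needed is the observation that the limiting distribution is a Dirac mass. This makes the argmax stable under small perturbations, so no tie-breaking issue can arise for $\varepsilon$ small, even at points where several indices are active. Note that the convergence is pointwise and not uniform in $x$, since $\varepsilon_0$ depends on $a=\Phi_n(x)$.
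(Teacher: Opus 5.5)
Your proof is correct and follows the same route as the paper. The paper also invokes \eqref{lemma:prob:cons2} to get a Dirac limit at $\hat i_n(\Phi_n(x))$, picks $\varepsilon_0$ so that $p_{n,\hat i,\varepsilon}>\tfrac12>p_{n,j,\varepsilon}$ for $j\neq\hat i$, and concludes that the characteristic functions coincide for $\varepsilon<\varepsilon_0$.
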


\begin{proof}[Proof of Proposition \ref{lemma:gradientActiveSetConv}]
Let $x\in\overline{\Omega}$ fixed. We start by noting that by 
\eqref{lemma:prob:cons2} we have for all $i\in\{1,\ldots,n\}$ that
$$ 
p_{n,j}(\Phi(x))=\left\{
\begin{array}{ll}
    1, & i=\hat{i} \\
   0,  & i\neq \hat{i},
\end{array}
\right.
$$
where $\hat{i}$ is the largest index for which $\phi_{i}(x)=v_{n}(x)$. From this we deduce that there exists $\varepsilon_0>0$ such that for all $\varepsilon\in (0,\varepsilon_0)$ we have 
$$ p_{n,i,\varepsilon}(\Phi(x))<\frac{1}{2}\mbox{ for }i\in \{1,\ldots,n\}\setminus\{\hat{i}\},\mbox{ and } p_{n,\hat{i},\varepsilon}(\Phi(x))>\frac{1}{2}.$$
By the definitions of $\Omega_{i,\varepsilon}$ and $\Omega_{i}$  this implies that for all $\varepsilon\in (0,\varepsilon_0)$ it holds that 
$$ \chi_{\Omega^{i,\varepsilon}}(x)=\chi_{\Omega^i}(x),$$
which implies that the pointwise convergence of $\chi_{\Omega_{i,\varepsilon}}$ as $\varepsilon\to 0^+$.
\end{proof}

\begin{rem}\label{rem:kk2}
In the following we  demonstrate the consequences of the limit behavior of $g'_{\varepsilon}$ in a neighborhood of $0$ on the  limiting behavior of the probability distribution given by $p_{n,j,\varepsilon}(a)$ as $\varepsilon \to 0^+$.

Returning to $g_{\varepsilon,M}$ and  $g_{\varepsilon,A}$ introduced in Remark \ref{rem:kk1}, both of them satisfy 
 \eqref{lemma:prob:hyp0} and \eqref{lemma:prob:hyp1}. Therefore, by means of Proposition \ref{lemma:prob} we obtain for $a\in\R^n$ and $j\in\{1,\ldots,n\}$ that
$$ p_{n,j}(a)=\left\{
\begin{array}{ll}
1 & \mbox{if }j=\hat{i}_n(a) \\
0 & \mbox{otherwise},
\end{array}
\right. $$
where we recall the notation introduced in \eqref{eq:kk10}. 

Another choice of smooth approximation for the minimum of a vector of real numbers is the Log-Sum-Exp approximation which is given by
$$ \tilde{\psi}_{n,\varepsilon}(a)=-\varepsilon\log\left(\frac{1}{n}\sum_{i=1}^n\exp\left(-\frac{a_i}{\varepsilon}\right)\right).$$
for $a\in\R^n$ and $\varepsilon>0$, which can be considered in place of $\psi_{n,\varepsilon}(a)$ defined in \eqref{defi:PsiAp}.

 The gradient of $\tilde{\psi}_n$ is given by the soft-min function:
$$\nabla \tilde{\psi}_{n,\varepsilon}(a)=\frac{1}{\sum_{i=1}^n\exp\left(-\frac{a_i}{\varepsilon}\right)}\left(\exp\left(-\frac{a_1}{\varepsilon}\right),\ldots,\exp\left(-\frac{a_n}{\varepsilon}\right)\right).$$
See \cite{GaoPavel} for a collection of results regarding this functions and its application to machine learning. We also point out that this approximation was used in the context of convexity  preserving neural networks in  \cite{CalafioreGaubertPossieri}.

Although both $\psi_{n,\varepsilon}$ and $\tilde{\psi}_{n,\varepsilon}$ converge  to $\psi_{n}$ uniformly as $\varepsilon \to 0^+$,  the limiting  behaviors as $\varepsilon \to 0^+$ of the derivatives of these two approximation are different. This has an impact on the limiting properties of the approximation of $\nabla v_n$ provided  by $\tilde v_{n,\epsilon}=\tilde\psi_{n,\varepsilon}\circ \Phi_n$  in comparison to $v_{n,\varepsilon}$. To see this, we note that setting $I(a)=\argmin\{a_i:i\in\{1,\ldots,n\}\}$ for $a\in\R^n$ we have for $j\in\{1,\ldots,n\}$ that
$$\lim_{\varepsilon\to 0^+}\frac{\partial \tilde{\psi}_{n,\varepsilon}}{\partial a_j}(a)=\left\{
\begin{array}{ll}
\frac{1}{|I(a)|} & \mbox{if }j\in I(a),\\
0 & \mbox{if }j\notin I(a).
\end{array}\right.
 $$
From this observation we see that $\nabla \tilde{v}_{n,\varepsilon}$ satisfies for each $x\in\Omega$ that
$$ \lim_{\varepsilon\to 0^+}\nabla \tilde{v}_{n,\varepsilon}(x)=\frac{1}{|I(\Phi_n(x))|}\sum_{i\in I(\Phi_n(x))}\nabla \phi_i(x),$$
that is, $\nabla\tilde{v}_{n,\varepsilon}$ converges point-wise to the average of the gradients of the active functions. In line with Remark \ref{rem:prob:HJB}, if $v_n$ is a viscosity solution of an  equation of the form of \eqref{rem:prob:HJB:eq1}, we can only ensure that \beq \lim_{\varepsilon\to 0^+}F(x,\nabla \tilde{v}_{n,\varepsilon}(x))\leq 0,\eeq
compared to
\beq \lim_{\varepsilon\to 0^+}F(x,\nabla v_{n,\varepsilon}(x))= 0,\eeq
if \eqref{lemma:prob:hyp0} and \eqref{lemma:prob:hyp1} hold.
Consequently in the context of viscosity solutions,  $\psi_{n,\varepsilon}$ provides a better representation of $v_n$ at points of discontinuity   than $\tilde{\psi}_{n,\varepsilon}$.
\end{rem}

\begin{rem}
\label{rem:FixingCounterExample}
Regarding Remark \ref{rem:prob:HJB},  if condition \eqref{rem:prob:HJB:eq3} does not hold, we now explain how to modify the functions $\{\phi_{i}\}_{i=1}^n$ in a manner such  that $v_n$ does not change and the new family satisfies \eqref{rem:prob:HJB:eq3}. For this purpose let us first fix some $i\in\{1,\ldots,n\}$  and define
$$A_i=\{x\in\overline{\Omega}: \nabla \phi_{i}(x)\notin D^*v_n(x) \}.$$
We argue that $A_{i}$ is open in the relative topology of $\overline{\Omega}$. Indeed, let $\bar{x}\in\overline{\Omega}$ be such that $\nabla \phi_{i}(\bar{x})\notin D^*v_n(\bar{x})$, then there exists $\delta>0$ such that for all $x\in B(\bar{x},\delta)\cap \overline{\Omega}$ we have $\nabla \phi_{i}(x)\notin D^*v_n(x)$. To prove this claim, let us proceed by contradiction. If the claim does not hold, then there exists $x_{m}\in\overline{\Omega}$ approaching $\bar{x}$ as $m\to\infty$ such that $\nabla \phi_i(x_m)\in D^* v_n(x_m)$. By the definition of $D^* v(x_m)$, for each $m\in\N$ there exists a sequence $x_{k,m}$ such that $x_{k,m}\to x_m$ as $k\to\infty$, $v_n$ is differentiable at $x_{k,m}$ and $\nabla v_n(x_{k,m})\to \nabla \phi_i(x_m)$ as $k\to\infty$. From this we deduce that for each $m\in\N$ there exists $k(m)$ such that
$$|\nabla v_n(x_{k(m),m})-\nabla \phi_i(x_m)|+|x_{k,m}-x_m|<\frac{1}{m}. $$ 
Then by setting $\tilde{x}_m=x_{k(m),m}$ we have that
$$|\nabla v_n(\tilde{x}_{m})-\nabla \phi_i(\bar{x})|+|x_{k,m}-\bar{x}|<\frac{1}{m}+|\nabla \phi_i(\bar{x})-\nabla \phi_i(x_m)|+|x_m-\bar{x}|.$$
Thus, by the continuity of $\nabla \phi_i$ and the convergence of $x_{m}$ to $\bar{x}$ as $m\to\infty$, we deduce that
$$\lim_{m\to\infty} \nabla v_n(\tilde{x}_{m})=\nabla \phi_i(\bar{x}) \mbox{ and }\lim_{m\to\infty}\tilde{x}_{m}=\bar{x}.$$
This implies that $\nabla \phi_i(\bar{x})\in D^* v_n(\bar{x})$, which is a contradiction and consequently $A_i$ is open in the relative topolgy of $\overline{\Omega}$. 

In particular, this implies that for each $x\in  A_i$ there exists $j\in \{1,\ldots,n\}\setminus\{i\}$ with $v_{n}(x)=\phi_j(x)$. Indeed, if the aforementioned assertion does not hold, then there exists $x\in A_i$ such that $v_{n}(x)=\phi_{i}(x)$ and $v_{n}(x)<\phi_j(x)$ for all $j\in\{1,\ldots,n\}\setminus\{i\}$. By the continuity of $\phi_i$, this implies that there exists $\delta>0$ such that for all $y\in B(x,\delta)\cap\Omega$ we have that $\phi_{i}(y)<\phi_j(y)$ for $j\in\{1,\ldots,n\}\setminus\{i\}$ and consequently $v_{n}=\phi_i$ in $B(x,\delta)\cap\Omega$. From this we deduce that $v_n$ is $C^1$ at $x$ and that $\nabla v_n(x)=\nabla \phi_i(x)$. However, this contradict the fact that $x\in A_i$. 

Using that for all $x\in A_i$ there exists $j\in\{1,\ldots,n\}\setminus\{i\}$ satisfying $v_{n}(x)=\phi_j(x)$ , we have that adding a positive number to $\phi_i(x)$ in a neighborhood of $x$ will not change $v_n(x)$ since $\phi_j(x)$ remains unchanged.

Since $A_{i}$ is open and due to the Lindelof property of $\R^d$, there exists an open sub-covering $\{B(x_j,\delta_j)\cap\overline{\Omega}\}_{j=1}^\infty$ of $A_i$ such that
$$ A_i=\bigcup_{j=1}^\infty  B(x_j,\delta_j)\cap\overline{\Omega}.$$
Since $A_{i}$ is bounded we can assume that $\{\delta_j\}_{j=1}^\infty$ is bounded. Let us consider a function $\nu:\R^d\mapsto\R$ such that $\nu\in C_{c}^{\infty}(\R^d)$ and $\nu(x)>0$ in $B(0,1)$ and $\nu(x)=0$ in $\R^d\setminus B(0,\delta)$. We define $\tilde{\phi}_{i}$ by
$$\tilde{\phi}_{i}(x)=\sum_{j=1}^{\infty}\frac{\delta_j^2}{2^j}\nu\left(\frac{x-x_j}{\delta_j}\right) +\phi_i(x)\mbox{ for }x\in\overline{\Omega}.$$
We clearly have that $\tilde{\phi}_{i}\in C^2(\overline{\Omega})$, $\tilde{\phi}_{i}(x)>\phi_i(x)$ for all $x\in A_i$ and $\tilde{\phi}_i(x)=\phi_i(x)$ for all $x\in\overline{\Omega}\setminus A_i$. Then replacing $\tilde{\phi}_i$ by $\phi_i$ does not affect the definition of $v_n$ and we have that $\nabla \tilde{\phi}_{i}(x)\in D^* v_n(x)$ for every $x\in\overline{\Omega}$ where $\tilde{\phi}_i(x)=v_n(x)$.
Now we iterate from $i=1$ to $i=n$.  If $A_{i}$ is empty we do not need to do anything, on the other hand, if $A_{i}\neq \emptyset$, we modify $\phi_i$ as above. In this manner we obtain that at the end of the $i$-th iteration we have 
$$v_n(x)=\min \left\{\min_{k\in\{1,\ldots,i\}}\tilde{\phi}_k(x),\min_{k\in\{i+1,\ldots,n\}}\phi_k(x)\right\}\text{ for all }x\in\overline{\Omega}$$ $$\text{ and } \{x\in\overline{\Omega}: \nabla \tilde{\phi}_{j}(x)\notin D^*v_n(x)\mbox{ and }\tilde{\phi}_j(x)=v_n(x) \} =\emptyset$$
 for $j\in\{1,\ldots,i\}$.
In this  manner  we end up with a modified family $\{\tilde{\phi}_{i}\}_{i=1}^n$ such that 
$$ v_n(x)=\min_{i\in\{1,\ldots,n\}}\tilde{\phi}_{i}(x), \text{ and } \{x\in\overline{\Omega}: \nabla \tilde{\phi}_{i}(x)\notin D^*v_n(x)\mbox{ and }\tilde{\phi}_i(x)=v_n(x) \}=\emptyset.$$
\end{rem}

\section{Approximation}
\label{sec:approximation}

Our next goal is to analyze the universality of the parametrization $\varv_{n,\varepsilon}$, which was introduced at the end of  Section \ref{sec:parametrization}, for semiconcave functions. We also address approximation properties involving the active sets of semiconcave functions. For this purposes we introduce a sequence  of settings $\{(\Theta_{m},\norm{\cdot}_{m}),\xi_{m}\}_{m\in\N}$, which will be required to satisfy the following property:

\begin{hypo}
\label{hypo:approx}
For all $\phi\in C^2(\overline{\Omega})$ and $m\in\N$ there exist $\theta_{m}\in\Theta_m$, such that
\beq  \lim_{m\mapsto\infty }\xi_{m}(\theta_m)=\phi\mbox{ in }C^2(\overline{\Omega}).
\label{hypo:approx:eq1}
\eeq
\end{hypo}
In analogy to the notation introduced in Section \ref{sec:parametrization} we define for the  sequence of settings $\{(\Theta_{m},\norm{\cdot}_{m}),\xi_{m}\}_{m\in\N}$,  the mappings $$\Xi_{m}:(\Theta_m)^n\mapsto (C^2(\Omega))^n \text{ by } \Xi_m(\theta_1,\ldots,\theta_m)=(\xi_m(\theta_1),\ldots,\xi_m(\theta_n)) \text{ and}$$
  $$\varv_{m,n,\varepsilon}:\Theta_m^n\mapsto C^2(\overline{\Omega}) \text{ by } \varv_{m,n,\varepsilon}(\theta)=\psi_{n,\varepsilon}(\Xi_m(\theta_1,\ldots,\theta_m))$$ for   $\theta=(\theta_{1},\ldots,\theta_{n})\in (\Theta_m)^n$.
We start with a technical result.

\begin{prop}\label{prop:interpolation}
There exists a constant $K>0$ depending on $\Omega$ such that for all $v\in Lip (\bar \Omega)$ with $\nabla v\in BV(\Omega;\R^d)$ and
   $p\in [1,\infty)$ the following inequality holds:
     \begin{equation}\label{prop:interpolation:cons1}
    \norm{\nabla v}_{L^p(\Omega;\R^d)} \leq \left\{ \begin{array}{ll}
 \displaystyle  \norm{\nabla v}_{L^{\infty}(\Omega;\R^d)}^{\frac{p-2}{p}}\left(K\norm{v}_{C(\overline{\Omega})}\norm{\nabla v}_{BV(\Omega;\R^d)}\right)^{\frac{1}{p}} & \text{ if }p\in [2,\infty), \\
\ecart\displaystyle |\Omega|^{\frac{2-p}{2p}}\left(K\norm{v}_{C(\overline{\Omega})}
\norm{\nabla v}_{BV(\Omega;\R^d)}\right)^{\frac{1}{2}} & \text{ if }p\in {{[1,2)}}.
     \end{array}
\right.     
\end{equation}

\end{prop}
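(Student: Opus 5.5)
The core of the argument is the case $p=2$: an integration by parts that trades one derivative of $v$ for the total variation of $\nabla v$. The remaining exponents then follow, for $p\ge 2$ by interpolation with $L^\infty$ and for $p<2$ by Hölder's inequality.

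\emph{Step 1 (the $L^2$ bound).} For a $C^2$ function and $\phi\in C^1_c(\Omega;\R^d)$ we have $\int_\Omega |\nabla v|^2\,dx=\int_\Omega \nabla v\cdot\nabla v\,dx$. The difficulty is that $\nabla v$ does not have compact support, so the integration by parts produces boundary terms. I would mollify $v$ into $v_\delta=\rho_\delta * v$, after extending $v$ in a Lipschitz and BV-controlled way to a neighbourhood of $\overline{\Omega}$. Convexity of $\Omega$ (hence Lipschitz boundary) makes an extension operator available, and this extension is where the constant $K$ depending on $\Omega$ enters. Alternatively, one can avoid boundary terms by multiplying with a cutoff $\eta$, but this produces gradients of $\eta$, so I prefer to write directly
\[
\int_\Omega|\nabla v_\delta|^2dx=-\int_\Omega v_\delta\,\Delta v_\delta\,dx+\int_{\partial\Omega}v_\delta\,\partial_\nu v_\delta\,dS .
\]
The first term is bounded by $\|v_\delta\|_{C}\,\|\nabla v_\delta\|_{BV}$, since $\int|\Delta v_\delta|\le \sum_i|D_i\partial_i v_\delta|(\Omega)$. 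The boundary term is bounded by $\|v\|_C\,\|\nabla v_\delta\|_{L^1(\partial\Omega)}$, and the BV trace theorem gives $\|\nabla v\|_{L^1(\partial\Omega)}\le K'\|\nabla v\|_{BV(\Omega)}$. Here the BV norm is understood as the full norm, including the $L^1$ part; if the paper's norm is only the variation seminorm, the $L^1$ part is absorbed using $\|\nabla v\|_{L^1}\le |D\nabla v|$-type Poincaré reasoning, or else must be assumed. Letting $\delta\to0$, using $\nabla v_\delta\to\nabla v$ in $L^2$ and lower semicontinuity or convergence of the variation (strict convergence of mollifications), yields
\[
\|\nabla v\|_{L^2}^2\le K\,\|v\|_{C(\overline\Omega)}\,\|\nabla v\|_{BV(\Omega;\R^d)} .
\]

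\emph{Step 2 ($p\ge2$).} Pointwise $|\nabla v|^p\le \|\nabla v\|_{L^\infty}^{p-2}|\nabla v|^2$. Integrating and using Step 1 gives
\[
\|\nabla v\|_{L^p}^p\le \|\nabla v\|_{L^\infty}^{p-2}K\|v\|_C\|\nabla v\|_{BV},
\]
which is exactly the claimed bound after taking $p$-th roots.

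\emph{Step 3 ($1\le p<2$).} Hölder's inequality gives $\|\nabla v\|_{L^p}\le|\Omega|^{\frac1p-\frac12}\|\nabla v\|_{L^2}$, and $\frac1p-\frac12=\frac{2-p}{2p}$. Inserting Step 1 yields the second line of \eqref{prop:interpolation:cons1}.

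The main obstacle is Step 1, specifically controlling the boundary term through a BV trace estimate together with the approximation argument that justifies integrating by parts for a merely Lipschitz $v$ with BV gradient. Steps 2 and 3 are routine.
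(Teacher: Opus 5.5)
Your proof has the same skeleton as the paper's. You pair $\nabla v$ with the gradient of (an approximation of) $v$, bound the interior term by $\norm{v}_{C(\overline{\Omega})}$ times the variation of $D\nabla v$, and bound the boundary term by the BV trace theorem. Your Steps 2 and 3 (Hölder, and $|\nabla v|^p\le \norm{\nabla v}_{L^\infty}^{p-2}|\nabla v|^2$) coincide with the paper.

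The difference is where the regularization sits. You smooth $v$ itself, so you must prove $\limsup_{\delta\to0}|D\nabla v_\delta|(\Omega)\le K\norm{\nabla v}_{BV(\Omega;\R^d)}$; the trace term then follows from the trace inequality applied to $\nabla v_\delta$. This forces you to build an extension of $v$ whose gradient has controlled variation on a neighbourhood of $\overline{\Omega}$, including any jump of the extended gradient across $\partial\Omega$. For convex $\Omega$ the easiest way is the dilation $x\mapsto v(x_0+t(x-x_0))$ with $x_0\in\Omega$ and $t\uparrow 1$. It is defined on an open set containing $\overline{\Omega}$, its gradient has variation $t^{2-d}|D\nabla v|(\Omega)$ there, and you then mollify.

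The paper sidesteps all of this. It applies the Gauss--Green formula for BV fields (Evans--Gariepy, Thm.~5.6) directly to $\nabla v$, which gives $\bigl|\int_\Omega\nabla v\cdot\nabla\phi\,dx\bigr|\le K\norm{\nabla v}_{BV(\Omega;\R^d)}\norm{\phi}_{C(\overline{\Omega})}$ for all $\phi\in C^1(\overline{\Omega})$. Only the test function is smoothed: $\phi=v_n$ with $v_n\to v$ uniformly and in $W^{1,2}(\Omega)$, which any Lipschitz $v$ admits. So the step you single out as the main obstacle never arises in the paper's ordering. Your route is valid but costs an extra approximation lemma.

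Two corrections. First, in the limit $\delta\to0$ you need an \emph{upper} bound on the variation of $\nabla v_\delta$, for instance $|D(\rho_\delta*\nabla\bar v)|(\Omega)\le|D\nabla\bar v|(\Omega+B_\delta)$. Lower semicontinuity points the wrong way and is useless there. Second, your fallback of absorbing the $L^1$ part of the BV norm via a Poincaré argument cannot work. For $v(x)=a\cdot x$ with $a\neq0$ one has $D\nabla v=0$ while $\nabla v\neq0$, so the inequality is false for the pure variation seminorm. The full BV norm is genuinely needed, and it is also what the trace estimate in the paper's proof tacitly uses.
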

\begin{proof}
Since $\nabla v$ is a function of bounded variation we can employ Theorem 5.6 in \cite[Chapter 5]{Evans2} to assert that:
\beq\forall\phi\in C^1(\Omega):\ \int_{\Omega}\nabla v\nabla \phi dx=-\int_{\Omega}\phi\, d\dive(\nabla v) +\int_{\partial\Omega} \phi \nabla v\cdot \hat{\nu} d\mathcal{H}_{d-1},\label{prop:interpolation:eq1}\eeq
where $\mathcal{H}_{d-1}$ is the $(d-1)$-Hausdorff measure, $\hat{\nu}$ is the outward normal to $\partial\Omega$, and the $d\dive(\nabla v)$ is a signed and finite Radon measure in $\Omega$. To see that this holds, we have from Theorem 5.6 in \cite[Chapter 5]{Evans2} and the fact that $\frac{\partial v}{\partial x_i}\in BV(U)$ that 

    $$\int_{\Omega}\nabla v\nabla \phi dx =\sum_{i=1}^{d}\int_{\Omega}\frac{\partial v}{\partial x_i}\dive\left( e_i\phi \right)dx=\sum_{i=1}^d\left( -\int_{\Omega} \phi e_i d\nabla \frac{\partial v}{\partial x_i}+\int_{\partial\Omega} \phi \frac{\partial v}{\partial x_i} e_{i} \cdot\nu d\mathcal{H}_{d-1}\right) $$
    $$ =-\int_{\Omega}\phi\, d\dive(\nabla v) +\int_{\partial\Omega} \phi \nabla v\cdot \hat{\nu} d\mathcal{H}_{d-1}$$
where  for $\in\{1,\ldots,d\}$ $e_i$ corresponds to the $i-th$ canonical vector of $\R^d$ and the expression $d\nabla \frac{\partial v}{\partial x_i}$ stands for the vector valued radon measure.

Again by Theorem 5.6 in \cite[Chapter 5]{Evans2}, there exists a constant $C>0$ such that 
\beq \forall\phi\in C^1(\overline{\Omega}):\ \left|\int_{\partial\Omega} \phi \nabla v\cdot \hat{\nu} d\mathcal{H}_{d-1}\right|\leq C\norm{\nabla v}_{BV(\Omega;\R^d)}\norm{\phi}_{C(\overline{\Omega})}.\label{prop:interpolation:eq2}\eeq
Here we use that the boundary trace of BV functions is a bounded linear operator from $BV(\Omega)$ to $L^1(\partial \Omega; \mathcal{H}_{d-1})$.
Combining \eqref{prop:interpolation:eq1}, \eqref{prop:interpolation:eq2} and the fact that 
$$\forall\phi\in C^1(\Omega):\ \left|\int_{\Omega}\phi  d\dive \nabla v\right|\leq d|\dive \nabla v|(\Omega)\norm{\phi}_{C(\overline{\Omega})}\leq \norm{\nabla v}_{BV(\Omega;\R^d)}\norm{\phi}_{C(\overline{\Omega})},$$ we have there exists $K>0$ independent of $v$ such that
\beq\forall\phi\in C^1(\Omega):\ \left|\int_{\Omega}\nabla v\nabla \phi dx\right|\leq K \norm{\nabla v}_{BV(\Omega;\R^d)}\norm{\phi}_{C(\overline{\Omega})}. \label{prop:interpolation:eq3}\eeq
Additionally, due to the fact that $v\in C(\overline{\Omega})\cap W^{1,\infty}(\Omega)$, there exists a sequence $v_{n}\in C^{\infty}(\overline{\Omega})$ satisfying:
$$ \lim_{n\to\infty}\norm{v_{n}-v}_{W^{1,2}(\Omega)}=0,\quad\norm{v_n-v}_{C(\overline{\Omega})}=0.$$
Using \eqref{prop:interpolation:eq3} with $\phi=v_{n}$ we deduce that 
\beq \norm{\nabla v}^2_{L^2(\Omega;\R^d)}\leq K \norm{\nabla v}_{BV(\Omega;\R^d)}\norm{v}_{C(\overline{\Omega})}. \label{prop:interpolation:eq4}\eeq
This proves \eqref{prop:interpolation:cons1} in the case $p=2$. If $p\in (1,2)$, the Hölder inequality implies that 
$$\norm{\nabla v}^{p}_{L^p(\Omega;\R^d)}\leq |\Omega|^{\frac{2-p}{2}}\left(K \norm{\nabla v}_{BV(\Omega;\R^d)}\norm{v}_{C(\overline{\Omega})}\right)^{\frac{p}{2}} $$
which proves \eqref{prop:interpolation:cons1} in this case.

For $p>2$, due to the fact that $v\in Lip(\overline{\Omega})$ we have that 
$$\norm{\nabla v}_{L^{p}(\Omega;\R^d)}\leq \norm{\nabla v}_{L^{\infty}(\Omega;\R^d)}^{\frac{p-2}{p}}\norm{\nabla v}_{L^2(\Omega)}^{\frac{2}{p}}.$$
Combining the above inequality with \eqref{prop:interpolation:eq4} we obtain that \eqref{prop:interpolation:cons1} holds.
\end{proof}

\begin{rem}
We observe that \Cref{prop:interpolation} allows to transfer information from a function to its gradient. For instance, for $v=f-f_n$, with $$\max\left\{\norm{f}_{ W^{1,\infty}(\overline{\Omega})} +\norm{\nabla f}_{BV(\Omega;\R^d)},\sup_{n\in\N}\norm{f_n}_{ W^{1,\infty}(\overline{\Omega})}+\norm{\nabla f}_{BV(\Omega;\R^d)}\right\}\leq R$$ the previous proposition allows us to measure the convergence of $\nabla f_n$ to $\nabla f$ in terms of the $C(\overline{\Omega})$ distance between $f_n$ and $f$.
\end{rem}

We shall apply  Proposition \ref{prop:interpolation} to the case of semiconcave function. We recall that according to Theorem 2.3.1 in \cite[Section 3, Chapter 2]{Cannarsa2004}, semiconcave functions belong to $BV(\Omega)$.  This results from the fact that convex functions are of bounded variation. We cannot use this fact directly, since it does not provide a bound on the $BV(\Omega;\R^d)$ norm. For this reason, in the following theorem we first give an estimate of this norm depending on the semiconcavity constant, on $\Omega$, and the $C(\overline{\Omega})$ norm, which we combine with \eqref{prop:interpolation:cons1}.

\begin{lemma}
\label{lemma:BVnormSC}
There exists a constant $K>0$ depending on $\Omega$ such that for all $v\in Lip (\bar \Omega)$ which are $C-$semiconcave with $C>0$ the estimate
\beq\norm{\nabla v }_{BV(\Omega;\R^d)}\leq K\left(\norm{\nabla v}_{L^\infty(\Omega;\R^d)}+C+\norm{v}_{C(\overline{\Omega})}\right)\label{lemma:BVnormSC:cons1} \eeq
holds.
\end{lemma}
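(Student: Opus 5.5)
Since $v$ is $C$-semiconcave, the function $w(x)=\frac{C}{2}|x|^2-v(x)$ is convex on $\Omega$. I use the normalization $v-\frac{C}{2}|x|^2$ concave; the paper's definition with $C|x|^2$ only changes constants. The plan is to bound the total variation of $\nabla w$ and then return to $\nabla v=Cx-\nabla w$. The term $Cx$ is smooth on the bounded set $\Omega$. Its BV norm is therefore bounded by $K(C+C\,\mathrm{diam}(\Omega))$, so it suffices to estimate $\norm{\nabla w}_{BV(\Omega;\R^d)}$. The $L^1$ part of that norm is trivial: $\norm{\nabla w}_{L^1}\le |\Omega|(\norm{\nabla v}_{L^\infty}+C\,\mathrm{diam}(\Omega)+C\sup_{\Omega}|x|)$. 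The real work is the variation part, i.e. the sup over $\phi\in C^1_c(\Omega;\R^d)$ with $\norm{\phi}_{C}\le 1$ of $\sum_i|\int_\Omega \partial_i w\,\dive\phi\,dx|$.

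For a convex $w$ the distributional Hessian $D^2w$ is a matrix-valued Radon measure that is positive semidefinite. Its total variation is therefore controlled by the trace measure $\Delta w$: $|D^2 w|(U)\le c_d\,\Delta w(U)$, since for a PSD matrix the operator norm of every entry is bounded by the trace. I would establish this by mollification, $w_\delta=w*\rho_\delta$, which is convex and smooth on $\Omega_\delta=\{x: \mathrm{dist}(x,\partial\Omega)>\delta\}$, and then pass to the limit using lower semicontinuity of the total variation.

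The main obstacle is that $\Delta w(\Omega)$ has to be bounded up to the boundary, and interior estimates alone do not give this. The approach I would take is the divergence theorem on the smooth approximations over a convex subdomain $\Omega'\Subset\Omega$, for instance $\Omega_\delta$ or a dilation of $\Omega$ about an interior point, which is again convex:
$$\int_{\Omega'}\Delta w_\delta\,dx=\int_{\partial\Omega'}\nabla w_\delta\cdot\hat\nu\,d\mathcal H_{d-1}\le \norm{\nabla w}_{L^\infty(\Omega;\R^d)}\,\mathcal H_{d-1}(\partial\Omega').$$
For convex $\Omega'\subset\Omega$ the perimeter satisfies $\mathcal H_{d-1}(\partial\Omega')\le\mathcal H_{d-1}(\partial\Omega)$, by monotonicity of perimeter for nested convex sets. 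This gives a bound independent of $\Omega'$ and $\delta$, and letting $\delta\to0$ and $\Omega'\uparrow\Omega$ yields
$$\Delta w(\Omega)\le \mathcal H_{d-1}(\partial\Omega)\bigl(\norm{\nabla v}_{L^\infty}+C\sup_{\Omega}|x|\bigr).$$
One point to check here is that $\nabla w_\delta$ is bounded on $\Omega_\delta$ by the Lipschitz constant of $w$, which holds because mollification preserves Lipschitz bounds.

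Combining the pieces gives $|D\nabla v|(\Omega)\le |D^2w|(\Omega)+C\sqrt d\,|\Omega|\le K(\norm{\nabla v}_{L^\infty}+C)$, and adding the $L^1$ part gives the estimate \eqref{lemma:BVnormSC:cons1} with a constant $K$ depending only on $\Omega$ and $d$. The term $\norm{v}_{C(\overline\Omega)}$ on the right-hand side is not actually needed in this argument, so including it only weakens the inequality.
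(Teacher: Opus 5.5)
Your argument is correct, and it follows a genuinely different route from the paper.

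\textbf{The paper's route.} The paper first extends $v$ to a $C$-semiconcave function $\hat v$ on all of $\R^d$ using Albano's construction. It then bounds $\norm{\hat v}_{C}$ on a ball $B(\bar x,2D)\supset\overline{\Omega}$ by $\norm{v}_{C(\overline{\Omega})}$ plus the Lipschitz constant of the extension times a distance. Next it tests the nonnegative quantity $C-p^\top\nabla^2\hat v\,p$ against $\norm{\phi}_{C}\kappa-\phi\geq 0$, where $\kappa$ is a cutoff with $\kappa\equiv 1$ on $\overline{\Omega}$. Two integrations by parts move all derivatives onto $\kappa$, which is why $\norm{v}_{C(\overline{\Omega})}$ enters the bound. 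Mixed derivatives are recovered by polarization with $p=(e_i+e_j)/\sqrt{2}$, and a mollification argument removes the provisional $C^2$ assumption.

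\textbf{Your route.} You never leave $\Omega$. You dominate $|D^2w|$ by the trace measure $\Delta w$, and you bound $\Delta w(\Omega)$ with a single integration by parts on convex interior approximants. As a result only $\mathrm{Lip}(w)$ and $\mathcal{H}_{d-1}(\partial\Omega)$ appear.

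\textbf{What each approach buys.} Your approach needs no extension machinery and gives an explicit constant. It also gives a sharper estimate with no $\norm{v}_{C(\overline{\Omega})}$ term. That is the natural form, since the left-hand side is invariant under adding constants to $v$. The paper's form still implies yours: replace $v$ by $v-v(x_0)$ and note $\norm{v-v(x_0)}_{C(\overline{\Omega})}\leq \norm{\nabla v}_{L^\infty}\operatorname{diam}(\Omega)$. The paper's approach, for its part, produces a global semiconcave extension of $v$, which is of independent use. It also avoids the divergence theorem on $\partial\Omega'$ and any facts about surface area of convex sets.

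\textbf{Technical points.} The dilation option $\Omega'=x_0+t(\Omega-x_0)$ with $t<1$ is the cleaner choice of approximant. It is compactly contained in $\Omega$, so $w_\delta$ is smooth on a neighbourhood of $\overline{\Omega'}$ once $\delta$ is small. It also satisfies $\mathcal{H}_{d-1}(\partial\Omega')=t^{d-1}\mathcal{H}_{d-1}(\partial\Omega)$, which makes the perimeter-monotonicity lemma unnecessary. Pass to the limit $\delta\to 0^+$ using lower semicontinuity on open sets of the mass of the weakly-$*$ convergent nonnegative measures $\Delta w_\delta\,dx$. Then let $t\uparrow 1$.
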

\begin{proof}
The proof consists of several steps. First  $v$ will be extended to a function $\hat{v}$ on  $\R^d$ with the same semiconcavity constant. Afterwards, we shall prove that $\nabla\hat{v}$ is a function in $BV(\Omega)$ thanks to the fact that $x\in\R^d\mapsto \frac{C}{2}|x|^2-\hat{v}(x)$ is convex. Additionally, delving into the proof of Theorems 6.8 and Theorem 1.39 in \cite{Evans2}, we will provide a bound on the $BV(\Omega;\R^d)$ norm of $\nabla\hat{v}$ depending on $\Omega$, $\norm{\nabla v}_{W^{1,p}(\overline{\Omega})}$, and $\norm{v}_{C(\overline{\Omega})}$. Thus \eqref{lemma:BVnormSC:cons1} will follow from the fact that $\hat{v}$ is an extension of $v$.

The first step is to extend $v$ to $\R^d$ preserving its semiconcavity constant. Following Proposition 3.1 in \cite{Albano} we extend $v$ from $\overline{\Omega}$ to $\R^d$ by:
$$\forall y\in\R^d:\ \hat{v}(y)=\inf_{z\in\Omega,\ p\in D^+v(z)}\left\{v(z)+p^\top\cdot(y-z)+\frac{C}{2}|y-z|^2\right\}.$$

Let us consider a set $U\subset\R^d$ bounded and open, we shall prove that $\hat{v}$ is Lipschitz and semiconcave.

It is clear that $\hat{v}$ is the pointiwise minimum over the following family of functions 
$$z\in\Omega,p\in D^+v(z):\ y\in\R^d\mapsto v_{z,p}(y)= v(z)+p^{\top}(y-z)+\frac{C}{2}|y-z|^2.$$ In addition, for all $z\in\Omega,p\in D^+v(z)$ the function $v_{z,p}$ is $(|p|+C\sup_{y\in U}d(z,y))$-Lipschitz in $U$ and $C$-semiconcave. We note that by the same argument as in the proof of \Cref{theo:SemiConcaveRepresentation} we have that 
$$\forall z\in \Omega,\ p\in D^+v(z):\ |p|\leq \norm{\nabla v}_{L^\infty(\Omega)}.$$
Therefore by 
Proposition 1.32 in \cite{Weaver} and \Cref{theo:SemiConcaveRepresentation} we have that $\hat{v}$ restricted to $U$ is Lipschitz with constant
$$L(U):=\norm{\nabla v}_{L^\infty(\Omega;\R^d)}+C\sup_{y\in U}dist(\Omega,y),$$ where for $y\in\R^d$ the expression $dist(\Omega,y)$ is the distance from $\Omega$ to $y$, and $C-$semiconcave. Since $U$ is arbitrary, we get that $\hat{v}$ is locally Lipschitz in $\R^d$ and $C-$semiconcave in $\R^d$.

It is easy to see that $\hat{v}$ and $v$ coincide in $\Omega$ and therefore by the continuity of both functions we have that $\hat{v}=v$ in $\partial\Omega$. Hence, $\hat{v}$ is an extension of $v$ with the same semiconcavity constant.

We now estimate $\norm{\hat{v}}_{C(\overline{U})}$. For all $y\in U$ we have that 
$$|\hat{v}(y)|\leq |v(\mathcal{P}_{\overline{\Omega}}(y))|+|v(\mathcal{P}_{\overline{\Omega}}(y))-\hat{v}(y)|.$$
where $\mathcal{P}_{\overline{\Omega}}$ is the projection of onto $\overline{\Omega}$ which is well defined since $\Omega$ is convex and bounded. Since $v$ and $\hat{v}$ coincide in $\overline{\Omega}$ and $\hat{v}$ is $L(U)-$Lipschitz, we have that 
$$\forall y\in U:\ |\hat{v}(y)|\leq |v(\mathcal{P}_{\overline{\Omega}}(y))|+L(U)|y-\mathcal{P}(y)|\leq \norm{v}_{C(\overline{\Omega})}+L(U)dist(y,\overline{\Omega}).$$
Consequently, we get that 
\beq \norm{\hat{v}}_{C(\overline{U})}\leq \norm{v}_{C(\overline{\Omega})}+L(U)\sup_{y\in U}dist(y,\overline{\Omega}).  \label{lemma:BVnormSC:proof:eq1}\eeq

We now  estimate the $BV(\Omega;\R^d)$ norm of $\hat{v}$. To this end, and since $\Omega$ is bounded, if $D$ is the diameter of $\Omega$, then there exists $\bar{x}\in\R^d$ such that $\overline{\Omega}$ is strictly contained in  $B(\bar{x},2D)$. Let us consider a function $\kappa\in C^{\infty}_{c}(B(\bar{x},2D))$ such that 
$$\forall x\in\R^d:\ \kappa (x)\in [0,1],\text{ and } \kappa(x)=1\Leftrightarrow x\in \overline{\Omega}.$$

Let us set $g(x)=\frac{C}{2}|x|^2-\hat{v}$ for $x\in\R^d$. Since $\hat{v}$ is $C$-semiconcave, we have that $g$ is convex and by \cite[Theorem 6.8]{Evans2} 
we deduce that $\nabla g$ is an element of $BV_{loc}(\R^d;\R^d)$. Since $\nabla g(x)=Cx-\nabla \hat{v}(x)$ for almost all $x\in\R^d$, we infer that $\hat{v}\in BV_{loc}(\R^d;\R^d)$.

To estimate the total variation norm of $\frac{\partial^2 \hat{v}}{\partial x_i\partial x_j} $ as a Radon measure in $\Omega$ we follow the spirit of the proof of Theorem 1.39 in \cite{Evans2}. Let us consider $\phi\in C^\infty_{c}(\Omega)$ and $p\in\R^d$ a vector with $|p|=1$, and assume, for the moment,  that $\hat{v}\in C^2(\R^d)$. Set $\tilde{\phi}=\norm{\phi}_{C(\overline{\Omega})}\kappa-\phi$. By the semiconcavity of $v$ we have that
$C I-\nabla^2 v(x)$ is semidefinite and thus
$$\int_{\R^d} (C-p^\top \nabla^2 v(x)p)\tilde{\phi}(x) dx\geq 0 $$
and since $\tilde{\phi}\ge\norm{\phi}_{C(\overline{\Omega})}\kappa-\phi\geq 0$ on $\Omega$ we get 
$$ \int_{\R^d} (C-p^\top \nabla^2 v(x)p)\phi(x) dx\leq \norm{\phi}_{C(\overline{\Omega})}\int_{\R^d}(C-p^\top \nabla^2 v(x)p)\kappa (x)dx.$$
Integrating by part the last term in the right-hand side of the above inequality we obtain 
$$ \int_{\R^d} (C-p^\top \nabla^2 v(x)p)\phi(x) dx\leq \norm{\phi}_{C(\overline{\Omega})}|B(\bar{x},2D)|(C+\norm{\nabla^2\kappa}_{C(\R^{d\times d};\R^d)}\norm{\hat{v}}_{C(\bar{B}(\bar{x},2D))}).$$
Then we have 
$$
\begin{array}{l}
\displaystyle 
-\int_{\Omega} p^\top \nabla^2 v(x)p\phi(x)dx=C\int_{\R^d}\phi(x)dx+\int_{\R^d} (C-p^\top \nabla^2 v(x)p)\phi(x) dx\\
\ecart \displaystyle \leq \norm{\phi}_{C(\overline{\Omega})}\left(C(|B(\bar{x},2D)|+|\Omega|)+|B(\bar{x},2D)|\norm{\nabla^2\kappa}_{C(\R^{d\times d};\R^d)}\norm{\hat{v}}_{C(\bar{B}(\bar{x},2D))}\right).
\end{array} $$
Since $\phi$ is arbitrary and grouping all the terms depending on $\Omega$, we deduce then that there exists $K>0$ depending on $\Omega$ such that:
\beq\forall\phi \in C_{c}^2(\Omega):\ \left|\int_{\Omega} p^\top \nabla^2 v(x)p\phi(x)dx\right|\leq K\norm{\phi}_{C(\overline{\Omega})}\left(C+\norm{\hat{v}}_{C(\bar{B}(\bar{x},2D))}\right).\label{lemma:BVnormSC:proof:eq2} \eeq
Taking $p=e_{i}$ for $i\in\{1,\ldots,d\}$ being the $i-$th canonical vector of $\R^d$, then 
\beq\forall\phi \in C_{c}^2(\Omega):\ \left|\int_{\Omega} \frac{\partial^2\hat{v}}{\partial x_i^2}\phi(x)dx\right|\leq K\norm{\phi}_{C(\overline{\Omega})}\left(C+\norm{\hat{v}}_{C(\bar{B}(\bar{x},2D))}\right). \label{lemma:BVnormSC:proof:eq3}\eeq
Additionally, taking $p=\frac{1}{\sqrt{2}}(e_{i}+e_j)$ for $i, j\in\{1,\ldots,d\}$ with $i\neq j$  we note that 
$$p^\top \nabla^2 v(x)p=\frac{1}{2}\left(\frac{\partial^2\hat{v} }{\partial x_{i}^2}(x)+\frac{\partial^2\hat{v} }{\partial x_{j}^2}(x)+2\frac{\partial^2\hat{v} }{\partial x_{i}\partial x_j}(x)\right) $$
and thus 
$$\frac{\partial^2\hat{v} }{\partial x_{i}\partial x_j}(x)=p^\top \nabla^2 v(x)p-\frac{1}{2}\left(\frac{\partial^2\hat{v} }{\partial x_{i}^2}(x)+\frac{\partial^2\hat{v} }{\partial x_{j}^2}(x)\right).$$
Combining this with \eqref{lemma:BVnormSC:proof:eq2} we get for all $i,j \in \{1,\dots,d\}$
\beq\forall\phi \in C_{c}^2(\Omega):\ \left|\int_{\Omega} \frac{\partial^2\hat{v}}{\partial x_i\partial x_j}\phi(x)dx\right|\leq 2K\norm{\phi}_{C(\overline{\Omega})}\left(C+\norm{\hat{v}}_{C(\bar{B}(\bar{x},2D))}\right). 
\label{lemma:BVnormSC:proof:eq4}
\eeq
We note that estimates \eqref{lemma:BVnormSC:proof:eq3}and \eqref{lemma:BVnormSC:proof:eq4} imply the existence of a constant independent of $v$, which we still denote by $K>0$, such that 
$$\norm{\nabla \hat{v}}_{BV(\Omega;\R^d)}\leq K \left(C+\norm{\hat{v}}_{C(\bar{B}(\bar{x},2D))}\right). $$
Combining this with \eqref{lemma:BVnormSC:proof:eq1} and the fact that $v=\hat{v}$ in $\Omega$, we arrive at \eqref{lemma:BVnormSC:cons1}. We recall that we assumed that $\hat{v}\in C^2(\R^d)$. The general case follows from a density  argument. That is, for $\varepsilon>0$ we denote by $\hat{v}_{\varepsilon}$ an smooth mollification of $\hat{v}$. Since $\hat{v}$ is defined globally, we have that $\hat{v}_{\varepsilon}\in C^{\infty}(\R^d)$ as by classical properties of a mollification we have that (see for instance Theorem 4.1 in \cite{Evans2})
\beq\lim_{\varepsilon\to 0^+}\hat{v}_{\varepsilon}=\hat{v}\text{ in }C(\overline{U}),\ \norm{\nabla \hat{v}_{\varepsilon}}_{L^\infty(U;\R^d)}\leq \norm{\nabla \hat{v}}_{L^\infty(U;\R^d)}, 
\label{lemma:BVnormSC:proof:eq5}
\eeq
and by Proposition 1.3.3 in \cite{Cannarsa2004} we have that $\hat{v}_{\varepsilon}$ is $C$ semiconcave. Furthermore by Theorem 5.2 in \cite{Evans2} we have that 
\beq\norm{\nabla v}_{BV(\Omega;\R^d)}\leq \liminf_{\varepsilon\to 0^+} \norm{\nabla v_\varepsilon}_{BV(\Omega;\R^d)}.
\label{lemma:BVnormSC:proof:eq6}
\eeq
Thus we get that \eqref{lemma:BVnormSC:cons1} is satisfied by $\hat{v}_{\varepsilon}$ for a constant $K>0$ independent of $\varepsilon$. The general case then follows from this, \eqref{lemma:BVnormSC:proof:eq5} and \eqref{lemma:BVnormSC:proof:eq6}.
\end{proof}

Combining  \Cref{prop:interpolation} and \Cref{lemma:BVnormSC}, we obtain \Cref{Theo:errorbounds} below. This  is an approximation   result for semiconcave functions for  the architecture proposed in this work. The  emphasis lies on the fact  that an  approximation of the functions  $\{\phi_i\}_{i=1}^{n}$ in the $C(\overline{\Omega})$ norm, induces information on the approximation  of the gradient of a  semiconcave function. 

\begin{theo}
    \label{Theo:errorbounds}
    For $i=1,\ldots,n$ let $\{\phi_{i,m}\}_{m=1}^\infty  \subset C^2(\overline{\Omega})$ be  sequences  converging to $\phi_i$ in $C(\overline{\Omega})$. Additionally,  assume that there exist $L>0$ and $C>0$ satisfying
\beq \limsup_{m\to\infty}L_m\leq L
\label{Theo:errorbounds:hyp1}
\eeq
and
\beq
\limsup_{m\to\infty}C_{m}\leq C,
\label{Theo:errorbounds:hyp2}
 \eeq
where

$$L_{m}=\sup_{i\in\{1,\ldots,n\}}
\norm{\nabla \phi_{i,m}}_{C(\overline{\Omega};\R^d)} \text{ and }C_{m}=\sup_{i\in\{1,\ldots,n\}}\norm{\nabla^2 \phi_{i,m}}_{C(\overline{\Omega};\R^{d\times d})}.$$
For $m\in\N$ and $\varepsilon\geq 0$ let us set 
    $$ v_n=\min_{i=1,\ldots,n}\phi_i,\ v_{n,m,\varepsilon}=\psi_{n,\varepsilon}\circ\Phi_{n,m}$$
where $\Phi_{n,m}\in C^2(\overline{\Omega};\R^n)$ is defined by $x\in\overline{\Omega}\mapsto\Phi_{n,m}(x)=(\phi_{1,m}(x),\ldots,\phi_{n,m}(x))$. Then
    \beq \norm{v_n-v_{n,m,\varepsilon}}_{C(\overline{\Omega})}\leq \sup_{i=1,\ldots,n}\norm{\phi_i-\phi_{i,m}}_{C(\overline{\Omega})}+(n-1)\varepsilon,
    \label{Theo:errorbounds:eq1}
    \eeq
and, if $p\geq 2$:
\beq
\begin{array}{l}
\displaystyle\norm{\nabla v_{n,m,\varepsilon}-\nabla v_{n}}_{W^{1,p}(\Omega)}\leq \\
\ecart \displaystyle L^{\frac{p-2}{p}}\left(K\left(\sup_{i=1,\ldots,n}\norm{\phi_i-\phi_{i,m}}_{C(\overline{\Omega})}+(n-1)\varepsilon\right)
 \left(C+L+\sup_{i=1,\ldots,n}\norm{\phi_i-\phi_{i,m}}_{C(\overline{\Omega})}+(n-1)\varepsilon\right)\right)^{\frac{1}{p}}
\end{array}
, \label{Theo:errorbounds:eq2:2:1}\eeq    
if $p\in [1,2)$:
\beq
\begin{array}{l}
\displaystyle\norm{\nabla v_{n,m,\varepsilon}-\nabla v_{n}}_{W^{1,p}(\Omega)}
\leq\\
\ecart\displaystyle  |\Omega|^{\frac{2-p}{2p}}\left(K\left(\sup_{i=1,\ldots,n}\norm{\phi_i-\phi_{i,m}}_{C(\overline{\Omega})}+(n-1)\varepsilon\right)\left(C+L+\sup_{i=1,\ldots,n}\norm{\phi_i-\phi_{i,m}}_{C(\overline{\Omega})}+(n-1)\varepsilon\right)\right)^{\frac{1}{2}} 
\end{array}
\label{Theo:errorbounds:eq2:2:2}
\eeq

    where $K$  is a constant depending only on $\Omega$. Further $v_n$ is $C-$semiconcave and $L-Lipschitz$.
\end{theo}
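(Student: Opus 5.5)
The plan is to split the error into a parameter-approximation part and a smoothing part, and then transfer the uniform estimate to the gradients via \Cref{prop:interpolation} combined with \Cref{lemma:BVnormSC}.

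\textbf{Uniform estimate \eqref{Theo:errorbounds:eq1}.} Set $v_{n,m}=\psi_n\circ\Phi_{n,m}$ and write
$$v_n-v_{n,m,\varepsilon}=(v_n-v_{n,m})+(v_{n,m}-v_{n,m,\varepsilon}).$$
Since $\psi_n$ is $1$-Lipschitz with respect to the maximum norm on $\R^n$, the first term is bounded by $\sup_i\norm{\phi_i-\phi_{i,m}}_{C(\overline{\Omega})}$. By \eqref{lemma:pospartapprox:eq1} of \Cref{lemma:pospartapprox}, the second term is at most $(n-1)\varepsilon$.

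\textbf{Regularity of the limit.} For the final assertion on $v_n$, note that $\nabla\phi_{i,m}\to\nabla\phi_i$ uniformly is not given; only $C^0$ convergence is. I would therefore argue as follows. Each $\phi_{i,m}$ is $L_m$-Lipschitz and $C_m$-semiconcave. Uniform convergence preserves both the Lipschitz bound and the concavity of $x\mapsto\phi_{i,m}(x)-\frac{C_m}{2}|x|^2$. Passing to the $\limsup$ with \eqref{Theo:errorbounds:hyp1}--\eqref{Theo:errorbounds:hyp2} shows that each $\phi_i$ is $L$-Lipschitz and $C$-semiconcave. A minimum of such functions is again $L$-Lipschitz and $C$-semiconcave, so this holds for $v_n$. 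By \Cref{lemma:pospartapprox}, $v_{n,m,\varepsilon}$ is $C_m$-semiconcave and $L_m$-Lipschitz, hence, for $m$ large, essentially $C$-semiconcave and $L$-Lipschitz (up to vanishing slack, which can be absorbed into $K$ or by taking $m$ large).

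\textbf{Gradient estimates \eqref{Theo:errorbounds:eq2:2:1}--\eqref{Theo:errorbounds:eq2:2:2}.} This is the main step. Apply \Cref{prop:interpolation} to $w=v_{n,m,\varepsilon}-v_n$, which is Lipschitz with $\norm{\nabla w}_{L^\infty}\le 2L$. Its gradient is in $BV$ because both terms are semiconcave, and by \Cref{lemma:BVnormSC} applied to each term,
$$\norm{\nabla w}_{BV}\le K\left(L+C+\norm{v_n}_{C}+\norm{v_{n,m,\varepsilon}}_{C}\right).$$
The obstacle is that the stated bound contains $C+L+\text{error}$ rather than sup-norms of $v_n$. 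Two points must be handled:
\begin{enumerate}
\item \emph{Removing the sup-norm dependence.} The BV seminorm of $\nabla w$ is invariant under adding constants to $v_n$ and $v_{n,m,\varepsilon}$, so I would subtract $v_n(x_0)$ from both functions. Then $\norm{v_n-v_n(x_0)}_C\le L\,\mathrm{diam}(\Omega)$, and $\norm{v_{n,m,\varepsilon}-v_n(x_0)}_C\le L\,\mathrm{diam}(\Omega)+\sup_i\norm{\phi_i-\phi_{i,m}}_C+(n-1)\varepsilon$. This gives $\norm{\nabla w}_{BV}\le K\left(C+L+\sup_i\norm{\phi_i-\phi_{i,m}}_C+(n-1)\varepsilon\right)$, with $\mathrm{diam}(\Omega)$ absorbed into $K$. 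One must check that the BV norm used in \Cref{lemma:BVnormSC} is indeed the variation only; the lemma's proof bounds only the second-derivative measures, so this works.
\item \emph{Combining with the interpolation inequality.} Insert this BV bound together with $\norm{w}_C$ from \eqref{Theo:errorbounds:eq1} into \eqref{prop:interpolation:cons1}. The case $p\ge2$ uses $\norm{\nabla w}_{L^\infty}^{(p-2)/p}\le(2L)^{(p-2)/p}$; the factor $2^{(p-2)/p}$ is absorbed into $K$. The case $p<2$ is the second line of \eqref{prop:interpolation:cons1}.
\end{enumerate}

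Finally, the norm written in the statement as $\norm{\nabla\cdot}_{W^{1,p}}$ is read as the $L^p$ norm of the gradient difference, matching \Cref{prop:interpolation}. If the $L^p$ norm of $w$ itself is also wanted, it is controlled by $|\Omega|^{1/p}\norm{w}_C$ and can be folded in similarly.
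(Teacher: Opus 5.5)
Your proposal is correct and is essentially the paper's own proof: the same splitting through $\psi_n\circ\Phi_{n,m}$ for \eqref{Theo:errorbounds:eq1}, passage to the limit in the Lipschitz and semiconcavity bounds for $v_n$, and then \Cref{prop:interpolation} with \Cref{lemma:BVnormSC} applied to $v_{n,m,\varepsilon}-v_n$. Your shift by $v_n(x_0)$ correctly supplies a step the paper leaves implicit, namely replacing the sup-norms from \Cref{lemma:BVnormSC} by $L\,\mathrm{diam}(\Omega)$ plus the uniform error; no check on which BV norm is meant is needed for this, since adding a constant does not change the gradient.

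One correction: the gap between $C_m,L_m$ and $C,L$ cannot be absorbed into an $\Omega$-dependent $K$ for every $m$. Taking $\phi_{i,1}=\phi_i+a\sin(kx_1)$, $\phi_{i,m}=\phi_i$ for $m\ge 2$, and $k\to\infty$ breaks the bound at $m=1$. The estimate must therefore be asserted only for $m$ large, where $C_m\le 2C$ and $L_m\le 2L$, and the resulting numerical factors go into $K$. The same applies to the factor coming from $\norm{\nabla(v_{n,m,\varepsilon}-v_n)}_{L^\infty(\Omega;\R^d)}\le L_m+L$ when $p\ge 2$, which makes $K$ depend on $p$ as well.
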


\begin{proof}
To prove this result we recall that for all $\varepsilon>0$, the function $\psi_{n,\varepsilon}$ is a 1-Lipschitz for the topology induced by the norm $\norm{\cdot}_{\infty}$. Therefore, we have that
\begin{equation*}
\begin{array}{l} 
\displaystyle|v_{n}(x)-v_{n,m,\varepsilon}(x)|=|\psi_{n}(\Phi(x))-\psi_{n,\varepsilon}(\Phi_m(x))(x)|\leq \\
\ecart\displaystyle
|\psi_{n}(\Phi(x))-\psi_{n}(\Phi_m(x))(x)|+|\psi_{n,\varepsilon}(\Phi_m(x))-\psi_{n}(\Phi_m(x))(x)| \\
\ecart\displaystyle
\le \sup_{i\in\{1,\ldots,n\}}|\phi_{i}(x)-\phi_{i,m}(x)|+(n-1)\varepsilon, 
\end{array}\end{equation*}
where we have used \eqref{prop:propertiesApprox:cons3}. This proves \eqref{Theo:errorbounds:eq1}.

To prove \eqref{Theo:errorbounds:eq2:2:1}-\eqref{Theo:errorbounds:eq2:2:2} we note that due to \eqref{prop:propertiesApprox:cons3}, \eqref{hypo:positivepart:2}, and \eqref{eqkk4} the functions $v_{n,m,\varepsilon}$ are $L_m$-Lipschitz. Therefore we have
$$|v_{n,m,\varepsilon}(x)-v_{n,m,\varepsilon}(y)|\leq L_m|x-y|\mbox{ for all }x,y\in\overline{\Omega}.$$
Using this, \eqref{Theo:errorbounds:eq1}, \eqref{Theo:errorbounds:hyp1}, and the assumption that $\phi_{i,m} \to \phi_i$ in $C(\bar \Omega)$  we obtain
$$|v_{n}(x)-v_{n}(y)|\leq L|x-y|\mbox{ for all }x,y\in\overline{\Omega},$$
i.e., $v$ is $L-Lipschitz$. Additionally, since  $v_{n,m}$ is $C_{m}$-semiconcave, we have, for all $x,y\in\Omega$ and $t\in[0,1]$,
$$ t v_{n,m}(x) + (1-t)v_{n,m}(y) - v_{n,m} (tx+(1-t)y) \le \frac{C_m}{2} t(1-t) |x-y|^2,$$
see \cite[Proposition 1.1.3]{Cannarsa2004}.
Applying the uniform convergence of $v_{n,m}$ to $v_n$ and \eqref{Theo:errorbounds:hyp2} we deduce, for all $x,y\in\Omega$ and $t\in[0,1]$, that
$$ t v_n(x) + (1-t)v_n(y) - v_n(tx+(1-t)y) \le \frac{C}{2} t(1-t) |x-y|^2.$$
which implies that $x\mapsto v_n(x)-\frac{C}{2}x^2$ is concave and consequently $v_n$ is $C-$semiconcave.

This allows us to apply \cref{prop:interpolation} in conjuntion with \Cref{lemma:BVnormSC} to $v_{n,m,\varepsilon}-v_{n}$ and obtain that \eqref{Theo:errorbounds:eq2:2:1} and \eqref{Theo:errorbounds:eq2:2:2} hold.

\end{proof}
\begin{rem}
The fact that this result  allows to measure the $L^p$-error between $\nabla v_n$ by $\nabla v_{n,m}$ in terms of the $C(\overline \Omega)$-error between $\phi_i$ and  $\phi_{i,m}$, is of importance for feedback control, since the feedback operator can be expressed in terms of the gradient of the value function. Semiconcavity of the value function is a well-studied property, see for instance \cite{Bardi1997,Cannarsa2004}.    The established estimate can also be relevant for obtaining error bounds which do not increase exponentially with the dimension. 
\end{rem}
We are now in position to prove the main theorem regarding the approximation properties of the proposed architecture.

\begin{theo}
\label{theo:ConvFiniteApprox}
    Let $v$ be a C-semiconcave function, which is Lipschitz continuous in $\overline{\Omega}$ with constant $L>0$, and suppose that Hypothesis \ref{hypo:approx} holds. Then for each $p\in[1,\infty)$ and each $\delta>0$,  there exist $\varepsilon>0$, $m\in\N$, $n\in\N$, and $\theta_m=(\theta_{m,1},\ldots,\theta_{n,m})\in\Theta_m^n$ such that
    we have
    $$ \norm{\varv_{n,m,\varepsilon}(\theta_{m})-v}_{C(\overline{\Omega})}+\norm{\nabla \varv_{n,m,\varepsilon}(\theta_{m})-\nabla v}_{L^{p}(\Omega;\R^d)}\leq \delta, $$
    $v_{n,m,\varepsilon}$ is ($L+\delta$) Lipschitz and $(C+\delta)$-semiconcave.
\end{theo}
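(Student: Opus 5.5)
The plan is a three-stage approximation: first a finite minimum, then parametrized functions, then smoothing. Each stage costs at most $\delta/3$, and the error is controlled through \Cref{Theo:errorbounds}.

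\emph{Step 1: finite minimum.} By \Cref{theo:SemiConcaveRepresentation} we represent $v$ as an infimum of functions $\phi_i\in C^2(\overline{\Omega})$ with $\norm{\nabla^2\phi_i}\le C$. In that construction $\phi_y(x)=v(y)+p_y(x-y)+\frac{C}{2}|x-y|^2$ with $|p_y|\le L$, so $|\nabla\phi_y|\le L+C\,\mathrm{diam}(\Omega)$ on $\overline\Omega$. Proposition \ref{prop:discreteRep} then gives $n$ and $\tilde\phi_1,\dots,\tilde\phi_n$ such that $v_n=\min_i\tilde\phi_i$ satisfies $\norm{v_n-v}_{C(\overline\Omega)}+\norm{\nabla v_n-\nabla v}_{L^p}\le\delta/3$.

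\emph{Step 2: parametrize and smooth.} By Hypothesis \ref{hypo:approx}, for each $i$ there are $\theta_{m,i}$ with $\phi_{i,m}:=\xi_m(\theta_{m,i})\to\tilde\phi_i$ in $C^2(\overline\Omega)$. Consequently $C_m\to\max_i\norm{\nabla^2\tilde\phi_i}\le C$, and $L_m\to\max_i\norm{\nabla\tilde\phi_i}_{C(\overline\Omega)}=:\tilde L$. We now apply \Cref{Theo:errorbounds} to the difference $v_{n,m,\varepsilon}-v_n$. Estimate \eqref{Theo:errorbounds:eq1} gives the $C(\overline\Omega)$ error. Estimates \eqref{Theo:errorbounds:eq2:2:1}--\eqref{Theo:errorbounds:eq2:2:2} give the gradient error in $L^p$, which is bounded by a constant times a positive power of $\sup_i\norm{\tilde\phi_i-\phi_{i,m}}_{C}+(n-1)\varepsilon$, with the remaining factors bounded uniformly. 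With $n$ now fixed, we choose $m$ large and $\varepsilon$ small so that both errors are at most $\delta/3$. The triangle inequality then yields the stated bound.

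\emph{Step 3: structural constants.} By Proposition \ref{lemma:pospartapprox}, $\varv_{n,m,\varepsilon}(\theta_m)$ is $C_m$-semiconcave and $L_m$-Lipschitz. Since $C_m\to\le C$, taking $m$ large gives $C_m\le C+\delta$.

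The Lipschitz constant is the main obstacle, because $L_m\to\tilde L$, and $\tilde L$ may exceed $L$ since $\nabla\tilde\phi_i$ is only controlled near the points where $\tilde\phi_i$ is active. I would handle this in one of two ways:
\begin{itemize}
\item Enlarge the domain slightly, or restrict attention so that only the active region matters. This uses $|\nabla\tilde\phi_i(x)|\le L+C|x-y_i|$, which is close to $L$ near $y_i$.
\item Alternatively, bound the gradient of the $\varepsilon$-approximation more directly. By \eqref{eqkk4}, $\nabla v_{n,m,\varepsilon}(x)$ is a convex combination of the $\nabla\phi_{i,m}(x)$. The weights are essentially supported on the near-active indices, with $\phi_{i,m}(x)\le v_n(x)+O(\varepsilon)$. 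For these indices, semiconcavity of $\tilde\phi_i$ together with $\tilde\phi_i\ge v$ and the $L$-Lipschitz property of $v$ forces $|\nabla\tilde\phi_i(x)|\le L+O(\sqrt{\eta})$, where $\eta$ is the activity gap.
\end{itemize}
The second route seems most promising. One needs a quantitative version of the earlier argument that $|p|\le L$ for $p\in D^+v$, applied to near-minimizers. The resulting bound must also be uniform near $\partial\Omega$, where the one-sided test direction may leave $\Omega$. There convexity of $\Omega$ and a continuity argument would be used.
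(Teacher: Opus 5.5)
Your Steps 1--2 and the semiconcavity half of Step 3 are the paper's proof. The paper splits $\delta$ in halves, takes $n$ from Proposition~\ref{prop:discreteRep}, and takes $m,\varepsilon$ from Hypothesis~\ref{hypo:approx} and Theorem~\ref{Theo:errorbounds}. Using $\tilde L$ in \eqref{Theo:errorbounds:hyp1}, as you implicitly do, is the correct way to invoke that theorem. Your concern about the Lipschitz constant is justified, and the paper does not resolve it. It asserts that $v_n$ is $L$-Lipschitz ``by construction'' and reads off $L+\delta$ from Theorem~\ref{Theo:errorbounds}, which tacitly requires $\limsup_m L_m\le L$. Take $v\equiv0$, which is $L$-Lipschitz for every $L>0$. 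The family of Theorem~\ref{theo:SemiConcaveRepresentation} then consists (for interior $y$) of $\frac C2|x-y|^2$. Hence $\max_i\norm{\nabla\tilde\phi_i}_{C(\overline\Omega;\R^d)}\ge C\,\mathrm{diam}(\Omega)/2$. Moreover $v_n\ge0$, $v_n\not\equiv0$ and $v_n(y_i)=0$, so $v_n$ has a Lipschitz constant that does not shrink with $L$.

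The gap in your proposal is that Step 3 never proves the $(L+\delta)$ bound; it stops at two options. The first fails as stated. Activity of $\tilde\phi_i$ is not localized near $y_i$: for $v=\frac C2|x|^2$ every $\phi_y$ equals $v$ and is active on all of $\overline\Omega$. So $L+C|x-y_i|$ only gives $L+C\,\mathrm{diam}(\Omega)$. The second option is the right mechanism, but it needs three concrete ingredients.

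(i) Uniform concentration of the weights. Suppose $a_j\ge\psi_n(a)+\eta$. Then either $j\ge2$ and $a_j-\psi_{j-1}(a)\ge\eta/2$, or $a_i-\psi_{i-1}(a)\le-\eta/(2n)$ for some $i>j$. Combining this with \eqref{eq:recursive:1}, \eqref{prop:propertiesApprox:cons6}, the monotonicity of $g'_\varepsilon$ and \eqref{hypo:positivepart:5} gives $p_{n,j,\varepsilon}(a)\le\omega_\eta(\varepsilon):=\max\{1-g'_\varepsilon(\eta/4),\,g'_\varepsilon(-\eta/(4n))\}\to0$, uniformly in $a$, once $n\varepsilon\le\eta/(4n)$. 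This has to be proved, because only \eqref{hypo:positivepart} is assumed, and for $g_{\varepsilon,A}$ every index carries positive weight.

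(ii) The near-active bound. Suppose $\tilde\phi_i(x)\le v(x)+\eta'$, set $e=\nabla\tilde\phi_i(x)/|\nabla\tilde\phi_i(x)|$, and assume $x-te\in\overline\Omega$. Then $v(x)-Lt\le v(x-te)\le\tilde\phi_i(x-te)\le v(x)+\eta'-t|\nabla\tilde\phi_i(x)|+\frac C2t^2$. Taking $t=\sqrt{2\eta'/C}$ yields $|\nabla\tilde\phi_i(x)|\le L+\sqrt{2C\eta'}$. Here $\eta'=\eta+2\max_k\norm{\phi_{k,m}-\tilde\phi_k}_{C(\overline\Omega)}+\norm{v_n-v}_{C(\overline\Omega)}$, so $n$ must also be large for this reason.

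(iii) The boundary. At $x\in\partial\Omega$, the inequality $\tilde\phi_i\ge v$ on $\overline\Omega$ alone does not bound the inward-normal component of $\nabla\tilde\phi_i(x)$; this is the unboundedness of $D^+v$ there. A continuity argument cannot fix this, because the interior estimate degenerates as $\mathrm{dist}(x,\partial\Omega)\to0$. A clean remedy is to approximate first $v^\lambda(x)=v(x_0+\lambda(x-x_0))$, with $x_0\in\Omega$ and $\lambda<1$. This function is $\lambda L$-Lipschitz and $\lambda^2C$-semiconcave on the convex set $x_0+\lambda^{-1}(\Omega-x_0)$, which contains $\overline\Omega$ with a positive margin. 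As $\lambda\to1^-$, $v^\lambda\to v$ in $C(\overline\Omega)$ and $\nabla v^\lambda\to\nabla v$ in $L^p(\Omega;\R^d)$. Building the representation for $v^\lambda$ on the larger set makes (ii) available at every $x\in\overline\Omega$.

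With (i)--(iii) one gets $|\nabla\varv_{n,m,\varepsilon}(\theta_m)(x)|\le L+\sqrt{2C\eta'}+\max_k\norm{\nabla\phi_{k,m}-\nabla\tilde\phi_k}_{C(\overline\Omega;\R^d)}+nL_m\,\omega_\eta(\varepsilon)$. Choosing $\lambda$ first, then $n$ and $\eta$, then $m$ and $\varepsilon$, makes this at most $L+\delta$.
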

\begin{proof}
    Let $p\in [1,\infty)$, $\delta>0$, and $ \{\phi\}_{i=1}^{\infty}$ be the family of function established in \Cref{prop:discreteRep}. For each $n\in\N$, let us denote
    $$ v_{n}=\min_{i=1,\ldots,n}\phi_i(x).$$
    By construction each $v_n$ is semiconcave with constant $C$ and Lipschitz continuous with constant $L$.
    By \Cref{prop:discreteRep} we can find $n\in\N$ such that
    \beq \norm{v_{n}-v}_{C(\overline{\Omega})}+\norm{\nabla v_{n}-\nabla v}_{L^{p}(\Omega;\R^d)}\leq \frac{\delta}{2}.
    \label{theo:ConvFiniteApprox:proof:eq1}
    \eeq
  Further  $\norm{v_{n}}_{W^{1,\infty}(\Omega)}\leq L$, and $v_n$ is $C-$semiconcave.
    
In addition, by \Cref{hypo:approx}, applied for $\xi= \phi_i$ with $i=\{1,\ldots,n\}$, and Theorem \ref{Theo:errorbounds} we have that there exists $\theta_m=(\theta_{m,1},\ldots,\theta_{n,m})\in\Theta_m^n$ and $\varepsilon>0$ satisfying 
    \beq    
    \norm{\varv_{n,m,\varepsilon}(\theta_m)-v_n}_{C(\overline{\Omega})}+\norm{\nabla \varv_{n,m,\varepsilon}(\theta)- \nabla v_n}_{L^p(\Omega;\R^d)}\leq \frac{\delta}{2},
    \label{theo:ConvFiniteApprox:proof:eq2}
    \eeq
$\norm{v_{n,m,\varepsilon}}_{W^{1,\infty}(\Omega)}\leq L+\delta$, and $v_{n,m,\varepsilon}$ is $(C+\delta)-$semiconcave.
Combining \eqref{theo:ConvFiniteApprox:proof:eq1}, and \eqref{theo:ConvFiniteApprox:proof:eq2}+ we arrive at

    $$  \norm{\varv_{n,m,\varepsilon}(\theta_m)-v}_{C(\overline{\Omega})}+\norm{\nabla \varv_{n,m,\varepsilon}(\theta)- \nabla v}_{L^p(\Omega;\R^d)}\leq  \delta,$$
    $\norm{\nabla \varv_{n,m,\varepsilon}(\theta_m)}_{C^1(\overline{\Omega})}\leq L+\delta$, and $\varv_{n,m,\varepsilon}(\theta_m)$ is $(C+\delta)$-semiconcave.
\end{proof}

We return to \Cref{Theo:errorbounds} and observe that it only connects the errors in the  norms of $C(\overline{\Omega})$ and $W^{1,p}(\Omega)$, with $p\in [1,\infty)$. However, for some applications it {{can be}} important to control the $W^{1,\infty}(\Omega)$ error. Since the function $v_n$ is only Lipschitz continuous it is not possible to obtain convergence in $W^{1,\infty}(\Omega)$ of $v_{n,m}$, since such a convergence would imply that $v_n$ is $C^1(\Omega)$.  Nevertheless, we can still identify regions where uniform convergence of the gradients  holds.  This involves  the set of active indices. For a given $x\in\Omega$, these are the indices for which $\min_{i=1,\ldots,n}\phi_i(x)$, respectively $\min_{i=1,\ldots,n}\phi_{i,m}(x)$, are achieved.

\begin{theo}
    \label{Theo:errorboundsLoc} 
    Let $\{\phi_{i,m}\}_{m}  \subset C^1(\overline{\Omega})$ be  sequences of functions converging to $\phi_i$
    in $C^1(\overline{\Omega})$ for $i=1,\ldots,n$. Further set
    $ v_n=\min_{i=1,\ldots,n}\phi_i,\ v_{n,m}=\min_{i=1,\ldots,n}\phi_{i,m},$
    and introduce  the sets of active indices for each $x\in \overline{\Omega}$
    $$I_n(x)=\{i:\phi_{i}(x)=v_n(x)\}, \ I_{n,m}(x)=\{i:\phi_{i,m}(x)=v_{n,m}(x)\},$$
    and for $\delta>0$ the sets
    $$ \Omega_{\delta}=\{x\in \overline{\Omega}: v_n(x)\leq \phi_j(x)-\delta\mbox{ for all }j\notin I_n(x)\}. $$
    Then, for all $m\in \N$ such that $\norm{v_{n}-v_{n,m}}_{C(\overline{\Omega})}< \frac{\delta}{2}$ we have that $I_{n,m}(x)\subset I_n(x)$, for $x\in \Omega_\delta$, and
    \beq  \norm{\nabla v_n -\nabla v_{n,m}}_{L^\infty(\Omega_{\delta};\R^{d})}\leq \sup_{i=1,\ldots,n}\norm{\nabla \phi_i-\nabla \phi_{i,m}}_{C(\overline{\Omega};\R^d)}.
\label{Theo:errorboundsLoc:cons1}    
\eeq
Additionally, defining $v_{n,\varepsilon}=\psi_{n,\varepsilon}(\phi_{1},\ldots,\phi_{n})$ and $v_{n,m,\varepsilon}=\psi_{n,\varepsilon}(\phi_{1,m},\ldots,\phi_{n,m})$, if \eqref{lemma:prob:hyp0} holds, then we have for $\varepsilon\in \left(0,\frac{\delta}{2(n-1)}\right)$ that
\beq 
\norm{\nabla v_n -\nabla v_{n,\varepsilon}}_{L^\infty(\Omega_{\delta};\R^{d})}\leq 2L(g_{\varepsilon}'(0)+(1-(g_\varepsilon'(\delta)^{(n-1)})))
\label{Theo:errorboundsLoc:cons2}
\eeq
and
\beq
\begin{array}{l}
\displaystyle\norm{\nabla v_n -\nabla v_{n.m,\varepsilon}}_{L^\infty(\Omega_{\delta};\R^{d})}\leq 2L(g_{\varepsilon}'(0)+(1-(g_\varepsilon'(\delta)^{(n-1)})))+\\
\ecart\displaystyle 2n(n-1)L\norm{g''_{\varepsilon}}_{L^\infty(\R)}\sup_{i=1,\ldots,n}\norm{\phi_i-\phi_{i,m}}_{C(\overline{\Omega})}+\sup_{i=1,\ldots,n}\norm{\nabla \phi_i-\nabla \phi_{i,m}}_{C(\overline{\Omega};\R^d)}.
\end{array}
\label{Theo:errorboundsLoc:cons3}
\eeq
where 
$$L=\sup_{i\in\{1,\ldots,n\}}\norm{\nabla \phi_i}_{C(\overline{\Omega};\R^d)}. $$
\end{theo}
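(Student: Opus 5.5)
I would prove the three estimates separately, in the order stated. All of them rest on one localization fact. On $\Omega_\delta$ the non-active functions $\phi_j$, $j\notin I_n(x)$, lie at least $\delta$ above $v_n(x)$, so small perturbations cannot make them active.

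\textbf{Active-set inclusion and \eqref{Theo:errorboundsLoc:cons1}.} Fix $x\in\Omega_\delta$ and $m$ with $\norm{v_n-v_{n,m}}_{C(\overline{\Omega})}<\delta/2$. I will also use $\sup_i\norm{\phi_i-\phi_{i,m}}_{C(\overline\Omega)}<\delta/2$; this holds for $m$ large by the $C^1$ convergence, and I will add it to the hypothesis on $m$ if needed. For $j\notin I_n(x)$ we then get $\phi_{j,m}(x)>\phi_j(x)-\delta/2\ge v_n(x)+\delta/2>v_{n,m}(x)$, so $j\notin I_{n,m}(x)$, which gives $I_{n,m}(x)\subset I_n(x)$. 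For the gradient bound I take a.e.\ $x\in\Omega_\delta$ at which both $v_n$ and $v_{n,m}$ are differentiable. At such a point every active $\phi_i$ touches $v_n$ from above, so $\nabla v_n(x)=\nabla\phi_i(x)$ for all $i\in I_n(x)$, and likewise $\nabla v_{n,m}(x)=\nabla\phi_{i,m}(x)$ for $i\in I_{n,m}(x)$. Choosing $i\in I_{n,m}(x)\subset I_n(x)$ gives $|\nabla v_n(x)-\nabla v_{n,m}(x)|=|\nabla\phi_i(x)-\nabla\phi_{i,m}(x)|$, and this is \eqref{Theo:errorboundsLoc:cons1}.

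\textbf{Estimate \eqref{Theo:errorboundsLoc:cons2}.} I use \eqref{eqkk4}: $\nabla v_{n,\varepsilon}=\sum_j p_{n,j,\varepsilon}\nabla\phi_j$, and at a differentiability point $\nabla v_n=\nabla\phi_{\hat i}$ with $\hat i=\hat i_n(\Phi_n(x))$. Hence
\[
|\nabla v_n-\nabla v_{n,\varepsilon}|\le \sum_j p_{n,j,\varepsilon}|\nabla\phi_j-\nabla\phi_{\hat i}|\le 2L\,(1-p_{n,\hat i,\varepsilon}),
\]
since the $p_{n,j,\varepsilon}$ form a probability distribution. It remains to show $1-p_{n,\hat i,\varepsilon}\le g'_\varepsilon(0)+1-g'_\varepsilon(\delta)^{n-1}$ by means of the product formula \eqref{eq:recursive:1}--\eqref{eq:recursive:2}.

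The factor $1-g'_\varepsilon(a_{\hat i}-\psi_{\hat i-1,\varepsilon})$ is handled as follows. By Lemma~\ref{lemma:upperboundPsi}, $a_{\hat i}-\psi_{\hat i-1,\varepsilon}\le a_{\hat i}-\psi_{\hat i-1}\le 0$, and $g'_\varepsilon$ is nondecreasing, so the factor is $\ge 1-g'_\varepsilon(0)$. For $i>\hat i$ the index $i$ is inactive. I expect $a_i-\psi_{i-1,\varepsilon}\ge \delta$ up to an $(n-1)\varepsilon$ shift from \eqref{prop:propertiesApprox:cons6}, and the restriction $\varepsilon<\delta/(2(n-1))$ is presumably what makes this work; then $g'_\varepsilon(a_i-\psi_{i-1,\varepsilon})\ge g'_\varepsilon(\delta)$ or a comparable quantity. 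The product is then at least $(1-g'_\varepsilon(0))g'_\varepsilon(\delta)^{n-1}\ge 1-g'_\varepsilon(0)-(1-g'_\varepsilon(\delta)^{n-1})$.

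\textbf{Main obstacle.} This lower bound on the arguments for inactive $i>\hat i$ is the step I expect to be delicate. The gap relative to $\psi_{i-1}$ is at least $\delta$, but passing to $\psi_{i-1,\varepsilon}$ may only give about $\delta/2$. I would first try to use $\psi_{i-1,\varepsilon}\le\psi_{i-1}$ to keep the full $\delta$. If that fails, I would settle for $g'_\varepsilon(\delta/2)$ and adjust the constant, or use monotonicity more carefully.

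\textbf{Estimate \eqref{Theo:errorboundsLoc:cons3}.} I split
\[
\nabla v_n-\nabla v_{n,m,\varepsilon}=(\nabla v_n-\nabla v_{n,\varepsilon})+(\nabla v_{n,\varepsilon}-\nabla v_{n,m,\varepsilon}).
\]
The first term is \eqref{Theo:errorboundsLoc:cons2}. For the second I write $\sum_j p_{n,j,\varepsilon}(\Phi_n)\nabla\phi_j-\sum_j p_{n,j,\varepsilon}(\Phi_{n,m})\nabla\phi_{j,m}$ and add and subtract $\sum_j p_{n,j,\varepsilon}(\Phi_{n,m})\nabla\phi_j$. This gives at most $\sup_i\norm{\nabla\phi_i-\nabla\phi_{i,m}}_{C(\overline{\Omega};\R^d)}$, plus $L\sum_j|p_{n,j,\varepsilon}(\Phi_n)-p_{n,j,\varepsilon}(\Phi_{n,m})|$. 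The latter is controlled by the Hessian bound \eqref{prop:propertiesApprox:cons4}, $|\nabla^2\psi_{n,\varepsilon}|\le2(n-1)\norm{g''_\varepsilon}_{L^\infty(\R)}$, with a factor $n$ when passing from $|\Phi_n-\Phi_{n,m}|$ to the sup norm and from $\ell^2$ to $\ell^1$. This yields the term $2n(n-1)L\norm{g''_\varepsilon}_{L^\infty(\R)}\sup_i\norm{\phi_i-\phi_{i,m}}_{C(\overline\Omega)}$.
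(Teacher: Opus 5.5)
Your route matches the paper's proof step for step. It uses the perturbation argument for $I_{n,m}(x)\subset I_n(x)$, and identifies $\nabla v_n$ and $\nabla v_{n,m}$ with the gradients of a common active index at differentiability points. It then uses the product formula \eqref{eq:recursive:1}--\eqref{eq:recursive:2}, with \Cref{lemma:upperboundPsi} for the $\hat i$ factor and \eqref{prop:propertiesApprox:cons6} for the inactive factors. Finally, an add-and-subtract with \eqref{prop:propertiesApprox:cons4} gives \eqref{Theo:errorboundsLoc:cons3}, and your two factors $\sqrt n$ reproduce the constant $2n(n-1)$.

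The extra condition $\sup_i\norm{\phi_i-\phi_{i,m}}_{C(\overline{\Omega})}<\delta/2$ is genuinely needed, and the paper's proof uses it too ($\delta_m\le\delta/2$). Take $\Omega=(-1,1)^d$, $\phi_1=0$, $\phi_2=2+x_1/10$, $\delta=1$, and a single term $\phi_{1,m}=10$, $\phi_{2,m}=x_1/10$. Then $\norm{v_n-v_{n,m}}_{C(\overline{\Omega})}\le 1/10$. Yet $I_{n,m}=\{2\}\not\subset\{1\}=I_n$ on $\Omega_\delta=\overline{\Omega}$, and the left side of \eqref{Theo:errorboundsLoc:cons1} is $1/10$ while the right side is $0$.

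The real issue is your ``main obstacle'', where only your fallback works. The first option fails because, under \eqref{lemma:prob:hyp0}, \Cref{lemma:upperboundPsi} gives $\psi_{i-1,\varepsilon}\ge\psi_{i-1}$. That is the very inequality you used for the $\hat i$ factor. Hence $a_i-\psi_{i-1,\varepsilon}\le a_i-\psi_{i-1}$, and the full gap $\delta$ is lost.

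No sharper use of monotonicity can recover $g'_\varepsilon(\delta)$, because \eqref{Theo:errorboundsLoc:cons2} with $g'_\varepsilon(\delta)$ is false. Take $\Omega=(-1,1)$, $n=3$, $\phi_1(x)=x$, $\phi_2=\phi_3=1-x$, $\delta=1$. Then $L=1$, and $(-1,0]\subset\Omega_\delta$ with $I_3=\{1\}$. Let $g_\varepsilon(s)=0$ for $s\le0$ and $g_\varepsilon(s)=s-\varepsilon(1-e^{-s/\varepsilon})$ for $s>0$. This satisfies \eqref{hypo:positivepart}, \eqref{lemma:prob:hyp0} and \eqref{lemma:prob:hyp1}, with $g'_\varepsilon(0)=0$.

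For $x\in(-1,0]$ and $s=1-2x$ one finds $v_3'-v_{3,\varepsilon}'=2(1-p_{3,1,\varepsilon})$, where $p_{3,1,\varepsilon}=g'_\varepsilon(s)\,g'_\varepsilon\bigl(s-\varepsilon(1-e^{-s/\varepsilon})\bigr)$. At $s=1$ this is strictly below $g'_\varepsilon(1)^2$, since $g'_\varepsilon$ is strictly increasing on $(0,\infty)$. By continuity, the error exceeds $2L\bigl(g'_\varepsilon(0)+1-g'_\varepsilon(\delta)^{2}\bigr)$ on an interval $(-\tau,0]$. For $\varepsilon=0.2<\delta/4$ it is about $0.050$, against a claimed bound of about $0.027$.

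Your fallback is the correct argument. For $i>\hat i$ one has $\psi_{i-1}(\Phi_n(x))=v_n(x)$. So \eqref{prop:propertiesApprox:cons6} together with $\varepsilon<\delta/(2(n-1))$ gives $a_i-\psi_{i-1,\varepsilon}>\delta/2$. This yields the estimate with $g'_\varepsilon(\delta/2)$ in place of $g'_\varepsilon(\delta)$; note this changes the argument of $g'_\varepsilon$, not a constant.

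That is exactly what the paper's proof establishes. Its estimates \eqref{Theo:errorboundsLoc:eq6} and \eqref{Theo:errorboundsLoc:eq8} carry $g'_\varepsilon(\delta/2)$, and the $g'_\varepsilon(\delta)$ in the statement does not follow from them. The bound \eqref{Theo:errorboundsLoc:cons3} must be corrected in the same way; take $\phi_{i,m}=\phi_i$ in the example above. With these two amendments, your proposal is a complete proof of the corrected statement.
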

\begin{proof}We first verify that $I_{n,m}(x)\subset I(x)$ for $x\in \Omega_{\delta}$.
    For $m\in\N$ let us set $$\delta_m:=\sup_{i=1,\ldots,n}\norm{\phi_{i,m}-\phi_i}_{C(\overline{\Omega})}.$$
Let $m\in\N$ be such that $\delta_m\leq \frac{\delta}{2}$ and consider $x\in \Omega_\delta$ and $i\in I_{n,m}(x)$. Then we have that
    $$ v_{n,m}(x)=\phi_{i,m}(x)\geq \phi_{i}(x)-\delta_m.$$
    Proceeding by contradiction, if  $i\not\in I_n(x)$, then
    $ v_n(x)\leq \phi_i(x)-\delta.$
    Combining these two inequalities we obtain
     $$ v_n(x)\leq v_{n,m}(x)-\delta+\delta_m.$$
    This implies that
    $v_{n,m}(x)-v_n(x)\geq\delta-\delta_m$ and hence
     $$|v_{n,m}(x)-v_n(x)|\geq\delta-\delta_m\ge \frac{\delta}{2}.$$

    On the other hand, we also know that $|v_{n}(x)-v_{n,m}(x)|< \frac{\delta}{2}$, which leads to
    a contradiction. Thus $I_{n,m}(x) \subset I_n(x)$ for $x\in \Omega_\delta$ follows.

    Turning to \eqref{Theo:errorboundsLoc:cons1}, let us first observe that $\Omega_\delta$ is cloased and there measurable. Further, both functions $v_n$ and $v_{n,m}$ are a.e. differentiable in $\Omega_\delta$.  Let $x$ be an arbitrary element of $\Omega_{\delta}$ where both functions are differentiable.  Choose $i\in I_{n,m}(x)$. Then by the previous step we have $i\in I_n(x)\cap I_{n,m}(x)$. This  implies that $v_n(x)=\phi_{i}(x)$ and $v_{n,m}(x)=\phi_{i,m}(x)$. By Theorems 3.2.13 and 3.2.2 in \cite[Part I, Section 3.1, pg.37,47]{Makela}, we have $\nabla v_n(x)=\nabla \phi_{i}(x)$ and $\nabla v_{n,m}(x)=\nabla\phi_{i,m}(x)$. Furthermore,  for $j\in I_{n,m}(x)$ 
\beq
\nabla v_{n,m}(x)=\nabla \phi_{j,m}(x)
\label{Theo:errorboundsLoc:eq1}
\eeq
    holds. Then it is clear that $$|\nabla v_n(x)-\nabla v_{n,m}(x)|\leq \sup_{j=1,\ldots,n}\norm{\nabla \phi_j-\nabla \phi_{j,m}}_{L^\infty(\Omega;\R^d)}.$$
    Since $v_{m}$ and $v$ are differentiable a.e. the above equality holds a.e. in $\Omega_\delta$ and  consequently
    $$ \norm{\nabla v_n-\nabla v_{n,m}}_{L^\infty(\Omega_\delta;\R^d)}\leq \sup_{i=1,\ldots,n}\norm{\nabla \phi_i-\nabla \phi_{i,m}}_{L^\infty(\Omega;\R^d)},$$
and thus \eqref{Theo:errorboundsLoc:cons1} is satisfied.

In order to prove \eqref{Theo:errorboundsLoc:cons2} we will first prove the following: 
\beq |1-p_{n,\hat{i},\varepsilon}(\Phi_{n}(x))|\leq g'_{\varepsilon}(0)+\left(1-g'_{\varepsilon}\left(\frac{\delta}{2}\right)^{n-1}\right),
\label{Theo:errorboundsLoc:eq6}
\eeq
for $x\in\Omega_{\delta}$ and $\hat{i}=\hat{i}_n(\Phi_n(x))$. For this purpose, from \eqref{eq:recursive:1} we deduce
\beq
\begin{array}{l}
\displaystyle |1-p_{n,\hat{i},\varepsilon}(\Phi_{n}(x))|=1-\prod_{j=\hat{i}+1}^ng'_{\varepsilon}(\phi_j(x)-v_{j-1,\varepsilon}(x))\left(1-g'_\varepsilon(\phi_{\hat i}(x)-v_{\hat{i}-1,\varepsilon}(x))\right) \\
\ecart\displaystyle 
=1-\prod_{j=\hat{i}+1}^ng'_{\varepsilon}(\phi_j(x)-v_{j-1,\varepsilon}(x))+g'_\varepsilon(\phi_{\hat{i}}(x)-v_{\hat{i}-1,\varepsilon}(x))\prod_{j=\hat{i}+1}^ng'_{\varepsilon}(\phi_j(x)-v_{j-1,\varepsilon}(x)).
\end{array}
\label{Theo:errorboundsLoc:eq7}
 \eeq
We analyze separately each of the terms in rightmost expression in the previous equality. Since $\hat{i}\in I_{n}(\Phi_n(x))$ we know that 
$$ \phi_{\hat{i}}(x)\leq v_{\hat{i}-1}(x).$$
Additionally, since \eqref{lemma:prob:hyp0} holds, we can use \Cref{lemma:upperboundPsi} in the previous inequality to deduce that 
$$\phi_{\hat{i}}(x)\leq v_{\hat{i}-1,\varepsilon}(x).$$
Since $g'_{\varepsilon}$ is monotonically increasing, we have 
\beq\label{eq:kk19}
g'_\varepsilon(\phi_{\hat{i}}(x)-v_{\hat{i}-1,\varepsilon}(x))\leq g'_{\varepsilon}(0).
\eeq
From this, and using \eqref{hypo:positivepart:2} we can bound the last term in \eqref{Theo:errorboundsLoc:eq7} and obtain
\beq 
g'_\varepsilon(\phi_{\hat{i}}(x)-v_{\hat{i}-1,\varepsilon}(x))\prod_{j=\hat{i}+1}^ng'_{\varepsilon}(\phi_j(x)-v_{j-1,\varepsilon}(x))\leq g'_{\varepsilon}(0). \label{Theo:errorboundsLoc:eq9}
\eeq
For the remaining term in \eqref{Theo:errorboundsLoc:eq7}, we note that since $\hat{i}=\max\{i\in I_{n}(\Phi_n(x))\}$ and $x\in\Omega_\delta$, we have that if $j\in\{\hat{i}+1,\ldots,n\}$, then 
$$\phi_j(x)-v_{j-1}(x)\geq \delta.$$
Using \eqref{prop:propertiesApprox:cons6} in the previous inequality and the fact that $\varepsilon\in \left(0,\frac{\delta}{2(n-1)}\right)$ we infer that 
$$\phi_j(x)-v_{j-1,\varepsilon}(x)>\frac{\delta}{2}.$$
Combining this with the  monotonicity of $g_{\varepsilon}'$ we deduce that 
$$g'_{\varepsilon}(\phi_j(x)-v_{j-1,\varepsilon}(x))\geq g'_{\varepsilon}\left(\frac{\delta}{2}\right).$$
This implies that 
\beq 
1-\prod_{j=\hat{i}+1}^ng'_{\varepsilon}(\phi_j(x)-v_{j-1,\varepsilon}(x))\leq 1-g'_{\varepsilon}\left(\frac{\delta}{2}\right)^{{{n-\hat{i}}}}\leq 1-g'_{\varepsilon}\left(\frac{\delta}{2}\right)^{n-1}.
\label{Theo:errorboundsLoc:eq8}
\eeq
Using \eqref{Theo:errorboundsLoc:eq9} and \eqref{Theo:errorboundsLoc:eq8} in  \eqref{Theo:errorboundsLoc:eq7}, we conclude that  \eqref{Theo:errorboundsLoc:eq6} holds. 

Assuming that $v_{n}$ is differentiable at $x$, we have  $\nabla v_n(x)=\nabla \phi_{\hat{i}}(x)$. Combining this with \eqref{eqkk4} and \eqref{Theo:errorboundsLoc:eq6} we obtain  

\begin{equation*}
\begin{array}{l}
\displaystyle
|\nabla v_{n,\varepsilon}(x)-\nabla v_n(x)| =\big |\nabla \phi_{\hat{i}}(x)(1-p_{n,\hat{i},\varepsilon}(\Phi_n(x)))+\sum_{\hat{i}\neq j}\nabla \phi_j(x)p_{n,j,\varepsilon}(\Phi_{n}(x))\big|
\\ \ecart
\displaystyle
\leq L(1-p_{n,\hat{i},\varepsilon}(\Phi_n(x)))+L\sum_{j\neq \hat{i}}p_{n,j,\varepsilon}(\Phi_{n}(x)) \leq 2L(1-p_{n,\hat{i},\varepsilon}(\Phi_n(x)))
\\ \ecart
\displaystyle
\leq 2L\left(g'_{\varepsilon}(0)+\left(1-g'_{\varepsilon}\left(\frac{\delta}{2}\right)^{n-1}\right)\right),
\end{array}
\end{equation*}
where in the next to last inequality we used \eqref{prop:propertiesApprox:cons3}. Using the almost everywhere differentiability of $v_{n}$ we obtain \eqref{Theo:errorboundsLoc:cons2}.

To prove \eqref{Theo:errorboundsLoc:cons3} we will first derive an upper bound of 
$$ \norm{\nabla v_{n,m,\varepsilon}-\nabla v_{n,m}}_{L^\infty(\Omega_\delta)}.$$
For this purpose, we note that by \eqref{prop:propertiesApprox:cons4} we have for all $x\in\overline{\Omega}$ 
\beq
\begin{array}{l}
\displaystyle (\nabla v_{n,m,\varepsilon}(x)-\nabla v_{n,\varepsilon}(x))^\top= D\psi_{n,\varepsilon}(\Phi_{n,m}(x)) D\Phi_{n,m}(x)-D\psi_{n,\varepsilon}(\Phi_{n}(x)) D\Phi_{n}(x) \\
\ecart\displaystyle
=(D\psi_{n,\varepsilon}(\Phi_{n,m}(x))-D\psi_{n,\varepsilon}(\Phi_{n}(x))) D\Phi_{n}(x)+ D\psi_{n,\varepsilon}(\Phi_{n,m}(x)) (D\Phi_{n,m}(x)-D\Phi_{n}(x)).
\label{Theo:errorboundsLoc:eq2}
\end{array}
\eeq
where $D$ denotes the Jacobian matrix. To bound the first term on the right hand side of \eqref{Theo:errorboundsLoc:eq2} we have 
\beq \begin{array}{l}
\displaystyle |D\psi_{n,\varepsilon}(\Phi_{n,m}(x))-D\psi_{n,\varepsilon}(\Phi_{n}(x))) D\Phi_{n}(x)|\leq |\nabla \psi_{n,\varepsilon}(\Phi_{n,m}(x))-\nabla \psi_{n,\varepsilon}(\Phi_{n}(x))|\cdot | D\Phi_{n}^\top (x)| \\
\ecart \displaystyle
\leq n^\frac{1}{2}L \norm{\nabla^2\psi_{n,\varepsilon} }_{L^\infty(\R^n;\R^{n\times n})}|\Phi_{n,m}(x)-\Phi_{n}(x)| \\
\ecart\displaystyle \leq nL \norm{\nabla^2\psi_{n,\varepsilon} }_{L^\infty(\R^n;\R^{n\times n})}\max_{i\in\{1,\ldots,n\}}\norm{ \phi_i- \phi_{i,m}}_{C(\overline{\Omega})},
\end{array} 
\label{Theo:errorboundsLoc:eq3}
\eeq
as for the second one we have 
\beq 
\begin{array}{l}
\displaystyle 
|D\psi_{n,\varepsilon}(\Phi_{n,m}(x)) (D\Phi_{n,m}(x)\!-\!D\Phi_{n}(x))|\!=\!\left(\sum_{i=1}^d\!\left(\sum_{j=1}^n\frac{\partial\psi_{n,\varepsilon}}{\partial  a_j}(\Phi_{n,m}(x))\!\left(\frac{\partial \phi_{j,m}}{\partial x_i}(x)\!-\!\frac{\partial \phi_{j}}{\partial x_i}(x)\!\right)\right)^2\right)^\frac{1}{2}\\
\ecart\displaystyle 
\leq \!\left(\sum_{i=1}^d\sum_{j=1}^n\frac{\partial\psi_{n,\varepsilon}}{\partial  a_j}(\Phi_{n,m}(x))\!\left(\frac{\partial \phi_{j,m}}{\partial x_i}(x)\!-\!\frac{\partial \phi_{j}}{\partial x_i}(x)\right)^2\right)^\frac{1}{2}\!=\!\left(\sum_{j=1}^n\frac{\partial\psi_{n,\varepsilon}}{\partial  a_j}(\Phi_{n,m}(x))|\nabla \phi_{j,m}(x)\!-\!\phi_{j}(x)|\right)^\frac{1}{2}\\
\ecart\displaystyle \leq \max_{i\in\{1,\ldots,n\}}\norm{\nabla \phi_{i}-\nabla \phi_{i,m}}_{C(\overline{\Omega};\R^d)},
\end{array}
\label{Theo:errorboundsLoc:eq4}
\eeq
where we have used Jensen's inequality. Plugging \eqref{Theo:errorboundsLoc:eq3} and \eqref{Theo:errorboundsLoc:eq4} in the right hand side of \eqref{Theo:errorboundsLoc:eq2} we arrive at
\beq
\begin{array}{l}
\displaystyle|(\nabla v_{n,m,\varepsilon}(x)-\nabla v_{n,\varepsilon}(x))|
\leq nL\norm{\nabla^2 \psi_{n,\varepsilon}}_{L^\infty(\R^n;\R^{n\times n} )}\max_{i\in\{1,\ldots,n\}}\norm{ \phi_i- \phi_{i,m}}_{C(\overline{\Omega})} +
\\
\ecart \displaystyle 
\max_{i\in\{1,\ldots,n\}}\norm{\nabla \phi_i-\nabla \phi_{i,m}}_{C(\overline{\Omega};\R^d)}.
\end{array}
\eeq
Using  \eqref{prop:propertiesApprox:cons4} on the first term in the right hand side of the above inequality we get
\beq
\begin{array}{l}
\displaystyle|(\nabla v_{n,m,\varepsilon}(x)-\nabla v_{n,\varepsilon}(x))|
\leq  2n(n-1)L \norm{g''_{\varepsilon}}_{L^\infty(\R)}\max_{i\in\{1,\ldots,n\}}\norm{ \phi_i- \phi_{i,m}}_{C(\overline{\Omega})}+ \\
\ecart \displaystyle 
\max_{i\in\{1,\ldots,n\}}\norm{\nabla \phi_i-\nabla \phi_{i,m}}_{C(\overline{\Omega};\R^d)}.
\end{array}
\label{Theo:errorboundsLoc:eq5}
\eeq
Finally, using \eqref{Theo:errorboundsLoc:eq5} and \eqref{Theo:errorboundsLoc:cons2}, we obtain  \eqref{Theo:errorboundsLoc:cons3} by the triangle inequality.
\end{proof}

\begin{rem}

\label{rem:errorboundsLoc:GradConvMoreau}

Let us discuss  some consequences of the previous theorem.  For this purpose we  observe that by  \eqref{hypo:positivepart:5} and  \eqref{lemma:prob:hyp1}  the term $(g_{\varepsilon}'(0)+(1-(g_\varepsilon'(\delta)^{(n-1)})))$ tends to $0$ as $\varepsilon \to 0.$ Moreover let us choose $\varepsilon_m\to 0^+$ such that in addition to \eqref{hypo:positivepart:6} the following holds:
$$ 
\lim_{m\to\infty}\left(\norm{g_{\varepsilon_m}''}_{L^\infty(\R)} \max_{i\in\{1,\ldots,n\}}\norm{\phi_{i,m}-\phi_i}_{C(\overline{\Omega})}\right)=0.$$
Then as a consequence of  \Cref{Theo:errorboundsLoc} we obtain that $v_{n,m,\varepsilon_m}$ converges to $\nabla v_n$ in $W^{1,\infty}(\Omega_{\delta})$ for each $\delta >0$, as $m\to \infty$. 

As a second consequence,  if we choose $g_{\varepsilon}=g_{\varepsilon,M}$ defined in Remark \ref{rem:kk1}, then we have that
$$ g'_{\varepsilon,M}(0)=0,\ g'_{\varepsilon}\left(\delta\right)=1\text{ and }\norm{g''}_{L^\infty(\R)}=\frac{1}{\varepsilon}, $$
if $\varepsilon<\delta $.
From this we deduce that under this choice the right hand side of  \eqref{Theo:errorboundsLoc:cons3} is bounded by \beq 2n(n-1)\frac{L}{\varepsilon}\sup_{i\in\{1,\ldots,n\}}\norm{\phi_{i,m}-\phi_i}_{C(\overline{\Omega})}+\sup_{i\in\{1,\ldots,n\}}\norm{\nabla \phi_{i,m} -\nabla \phi_i}_{C(\overline{\Omega};\R^d)}.
\label{eq:errorbound:gM}
\eeq
Consequently, for $\varepsilon\in \left(0,\frac{1}{2\delta(n-1)}\right)$ fixed as specified in Theorem \ref{Theo:errorboundsLoc}, $v_{n,m,\varepsilon}$ converges to $v_n$ in $W^{1,\infty}(\Omega_\delta)$ as $m\to\infty$.
\end{rem}

\begin{rem}
\label{rem:errorboundsLoc:Uconv}
It should be noted that \eqref{Theo:errorboundsLoc:cons3} does not imply convergence in $C(\Omega_\delta;\R^d)$ of  $\nabla v_{n,m,\varepsilon}$. However, noticing that the function $G(x)=\nabla \phi_{\hat{i}_n(x)}(x)$  is almost everywhere in $\Omega$ equal to $\nabla v_n$ and that 
$G$ is continuous in $\Omega_\delta$, we can infer, by \eqref{Theo:errorboundsLoc:cons3}, that 

\beq
\begin{array}{l}
\displaystyle\norm{G -\nabla v_{n.m,\varepsilon}}_{C(\Omega_{\delta};\R^{d})}\leq 2L(g'_{\varepsilon}(0)+(1-(g_\varepsilon'(\delta)^{(n-1)})))\\
\ecart\displaystyle + 2n(n-1)L\norm{g''_{\varepsilon}}_{L^\infty(\R)}\sup_{i=1,\ldots,n}\norm{\phi_i-\phi_{i,m}}_{C(\overline{\Omega})}+\sup_{i=1,\ldots,n}\norm{\nabla \phi_i-\nabla \phi_{i,m}}_{C(\overline{\Omega};\R^d)}.
\end{array}
\eeq
The continuity of $G$ in $\Omega_\delta$ is a consequence of the fact that  the active set of incides  $I_n$ does not change locally, that is, for $x\in\Omega_\delta$ we have  $I_n(y)=I_n(x)$  for all  $y\in B\left(x,\frac{\delta}{2L}\right)\cap \Omega$. To prove this assertion, we note that if $|y-x|<\frac{\delta}{2L}$, then for all $i\in\{1,\ldots,n\}$ we have $
|\phi_i(x)-\phi_i(y)|<\frac{\delta}{2}.$
By symmetry, it is enough to prove that $I_{n}(y)\subset I_{n}(x)$ for all $y\in B(x,\frac{\delta}{2L})\cap\Omega$. If $I_n(y)\nsubseteq  I_n(x)$ for some $y\in B(x,\frac{\delta}{2L})\cap\Omega$, then there exists $i\in I_n(y)\setminus I_{n}(x)$ and thus we obtain:
$$v(y)=\phi_{i}(y)\geq \phi_{i}(x)-\frac{\delta}{2} \geq v(x)+\frac{\delta}{2}\geq v(y)+\frac{\delta}{2}, $$
which is a contradiction.
\end{rem}

\section{Example}
\label{sec:Example}
In this section we introduce an example of a semiconcave function $v_d$ for $d\in\N$, which can be explicitly represented by a family of $2d$ functions of class $C^2$, satisfies a Hamilton Jacobi Bellman equation,  and which is not $C^1$. Additionally, we present a family of setting $S_{m}=(\Theta_m,\xi_m)$ for $m\in\N$ which satisfies \eqref{hypo:approx}. Utilizing this family of settings and the approximation of the positive part $g_{\varepsilon}=g_{\varepsilon,M}$ for $\varepsilon>0$, which was introduced in Remark \ref{rem:kk1}, we approximate $v_d$ by means of the the parametrization introduced in \eqref{def:parametrization}. 

In subsection \ref{subsec:setting} we verify that the hypotheses of Theorem \ref{Theo:errorbounds} are met and we use it to prove the convergence of the approximation. Furthermore, by means \ref{Theo:errorbounds} we will also prove that the the approximation is converging in $W^{1,\infty}(\Omega_\delta)$ for all $\delta>0$ and that the approximation satisfies the same Hamilton Jacobi Bellman equation approximately. 

In order to highlight the properties of the proposed approximation, we compere it with the approximation resulting from using the Log-Sum-Exp function (see Remark \ref{rem:kk2}) which is commonly utilized as a smooth approximation of the minimum. In contrast to the proposed approximation, the Log-Sum-Exp approximation is not able to deal with the discontinuities of the gradient and it struggles to solve the Hamilton Jacobi Bellman equation. To support this numerically, in subsection \ref{subsec:numexp} we implement both approximations and measure theirs convergences. Remarkably, these experiments suggest that the proposed approximation solves the Hamilton Jacobi Bellman equation approximately in a uniform sense.
\subsection{Exponential Distance Function}
In order to illustrate the properties of the proposed parametrization we introduce the Exponential Distance Function:
\beq 
x\in \overline\Omega \mapsto v_d(x):=\min_{i\in\{1,\ldots,2d\}}\phi_{i}(x)
\eeq
with  $\Omega=(-1,1)^d$ and
\beq 
\phi_{i}(x)=\exp\left(-\frac{1}{2}|x-e_i|^2\right)
\text{ and }
\phi_{d+i}(x)=\exp\left(-\frac{1}{2}|x+e_{i}|^2\right)
\eeq
for $i\in\{1,\ldots,d\}$ where $e_i$ is the $i$-th canocial vector of $\R^d$.

Clearly, the function $v_d$ is semiconcave thanks to \eqref{theo:SemiConcaveRepresentation} and it is a viscosity solution of the following Hamilton Jacobi Bellman equation:
\beq H(\nabla \phi(x),\phi(x)) =0\text{ for all }x\in\R^d.
\label{example:HJB}
\eeq
for the Hamiltonian $H:\R^d\times\R^+\to\R$ given by 
$$H(p,a)=\left\{\begin{array}{ll}
|p|^2+2\log(|a|)a^2 & \mbox{ if }a\neq 0 \\
|p|^2 & \mbox{ if }a=0
\end{array}\right. \mbox{, for }(p,a)\in\R^d\times \R.$$
Below we provide the proof.
\begin{lemma}
\label{lemma:ExpDist:HJB}
For each $i\in\{1,\ldots,2d\}$ the function $\phi_i$ satisfies
 \beq H(\nabla \phi_i(x),\phi_i(x))=0\mbox{ for all }x\in\R^d, \label{lemma:ExpDist:HJB:cons1}\eeq
 and 
 $v_{d}$ is a viscosity solution of \eqref{example:HJB}. 
\end{lemma}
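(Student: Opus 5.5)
The first claim is a direct computation. For $\phi_i(x)=\exp(-\frac12|x-c|^2)$ with $c\in\{\pm e_k\}$ we have $\nabla\phi_i(x)=-(x-c)\phi_i(x)$. Hence
\[
|\nabla\phi_i(x)|^2=|x-c|^2\phi_i(x)^2 ,
\]
while $\log(\phi_i(x))=-\frac12|x-c|^2$ gives $2\log(|\phi_i(x)|)\phi_i(x)^2=-|x-c|^2\phi_i(x)^2$. Since $\phi_i>0$, the branch $a\neq0$ of $H$ applies, and the two terms cancel. This proves \eqref{lemma:ExpDist:HJB:cons1} for every $x\in\R^d$.

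For the viscosity solution property, the plan is to treat sub- and supersolution separately, using that $H(p,a)$ is convex in $p$ and continuous on $\R^d\times(0,\infty)$. This suffices because $v_d>0$ everywhere.

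\textbf{Subsolution.} Let $\varphi\in C^1$ be such that $v_d-\varphi$ has a local maximum at $x_0$. Pick $i\in I(x_0)$, i.e. $\phi_i(x_0)=v_d(x_0)$. Since $v_d\le\phi_i$ with equality at $x_0$, the function $\phi_i-\varphi$ also has a local maximum at $x_0$. Because $\phi_i$ is differentiable there, $\nabla\varphi(x_0)=\nabla\phi_i(x_0)$. Therefore $H(\nabla\varphi(x_0),v_d(x_0))=H(\nabla\phi_i(x_0),\phi_i(x_0))=0\le0$.

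\textbf{Supersolution.} Let $\varphi\in C^1$ be such that $v_d-\varphi$ has a local minimum at $x_0$. Then $\nabla\varphi(x_0)\in D^-v_d(x_0)$. The function $v_d$ is semiconcave, so $D^+v_d(x_0)\neq\emptyset$. A nonempty subdifferential forces $v_d$ to be differentiable at $x_0$, by \cite[Proposition 3.1.5]{Cannarsa2004}, and then $D^-v_d(x_0)=\{\nabla v_d(x_0)\}$. It remains to show $H(\nabla v_d(x_0),v_d(x_0))=0$.

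At a differentiability point of a minimum of finitely many $C^1$ functions, $\nabla v_d(x_0)$ lies in the convex hull of $\{\nabla\phi_j(x_0):j\in I(x_0)\}$. Concavity of $H$ in $p$ would make the inequality go the wrong way, so this alone is not enough. I would argue more directly: for each $j\in I(x_0)$, the function $\phi_j-v_d\ge0$ has a minimum at $x_0$ and both are differentiable there. Hence $\nabla\phi_j(x_0)=\nabla v_d(x_0)$, which is the same reasoning as the \cite{Makela} step in the proof of Theorem \ref{Theo:errorboundsLoc}. Consequently $H(\nabla v_d(x_0),v_d(x_0))=H(\nabla\phi_j(x_0),\phi_j(x_0))=0\ge0$.

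The main obstacle is this supersolution step: at kinks the upper differential is a hull, and the convexity of $H$ in $p$ would only give $\le0$ there. The resolution is that a $C^1$ test function touching from below can only exist at points where $v_d$ is differentiable, because semiconcavity forces this.
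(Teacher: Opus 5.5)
Your computation for $\phi_i$ and your supersolution step are correct, but the subsolution step has a genuine gap. From $v_d\le\phi_i$, $\phi_i(x_0)=v_d(x_0)$ and a local maximum of $v_d-\varphi$ at $x_0$ you only get $\phi_i-\varphi\ge v_d-\varphi$ and $v_d-\varphi\le (v_d-\varphi)(x_0)$. These inequalities point in opposite directions, so $\phi_i-\varphi$ need not have a local maximum at $x_0$. (The implication you used is valid for a local \emph{minimum}. There it gives the supersolution property in one line, without any semiconcavity.)

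In fact your conclusion $H(\nabla\varphi(x_0),v_d(x_0))=0$ is false at kinks. Every $p\in\mathrm{co}\{\nabla\phi_j(x_0):j\in I(x_0)\}$ lies in $D^+v_d(x_0)$. By semiconcavity it is therefore the gradient at $x_0$ of an upper test function, for instance $x\mapsto v_d(x_0)+p\cdot(x-x_0)+\frac{C}{2}|x-x_0|^2$. Take the two-dimensional example at $x_0=(-\frac12,-\frac12)$, where $I(x_0)=\{1,2\}$. The midpoint $p=\frac12(\nabla\phi_1(x_0)+\nabla\phi_2(x_0))=(1,1)e^{-5/4}$ gives $H(p,v_d(x_0))=2e^{-5/2}-\frac52e^{-5/2}=-\frac12e^{-5/2}<0$.

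The missing idea is that convexity of $H$ in $p$ belongs to the subsolution step, and there it goes in the right direction. If $p\in D^+v_d(x_0)$, then $p\cdot h\ge v_d'(x_0;h)=\min_{j\in I(x_0)}\nabla\phi_j(x_0)\cdot h$ for every $h\in\R^d$. By separation, $p\in\mathrm{co}\{\nabla\phi_j(x_0):j\in I(x_0)\}$. The map $q\mapsto H(q,v_d(x_0))$ is convex and vanishes at each $\nabla\phi_j(x_0)$ with $j\in I(x_0)$, so it is $\le0$ on that hull. This is exactly the subsolution inequality.

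Your last paragraph therefore has the roles reversed. The hull appears only for test functions from above, where $\le0$ is all that is required. Test functions from below exist only at differentiability points, as you correctly showed.

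With this repair, your argument becomes a hands-on version of what the paper does in one line. The paper checks that $v_d$ solves the equation at every differentiability point and then invokes \cite[Proposition 5.3.1]{Cannarsa2004}: a semiconcave a.e.\ solution of an equation with Hamiltonian convex in $p$ is a viscosity solution. The proof of that proposition is precisely this split between $D^-v_d$ (nonempty only at differentiability points) and $D^+v_d=\mathrm{co}\,D^*v_d$ (handled by convexity).
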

\begin{proof}
We start by noting that an easy calculation proves that for each $i\in\{1,\ldots,n\}$, the function $\phi_i$ satisfies \eqref{lemma:ExpDist:HJB:cons1}. Additionally, fixing $x\in\R^d$ with $v_d$ differentiable at $x$, we have that $v_{d}(x)=\phi_i(x)$ and $\nabla v_{d}(x)=\nabla \phi_i(x)$ for some $i\in\{1,\ldots,n\}$ and hence we have that $v_d$ satsifies \eqref{example:HJB} at $x$. Then, due to the a.e. differentiability of $v_d$, we have that $v_d$ satisfies \eqref{example:HJB} a.e. and thus by Proposition 5.3.1 in \cite{Cannarsa2004} and the semiconcavity of $v_d$, we have that $v_d$ is a viscosity solution of \eqref{example:HJB}.  
\end{proof}
\begin{figure}[h!]
\centering
         \includegraphics[width=0.55\textwidth]{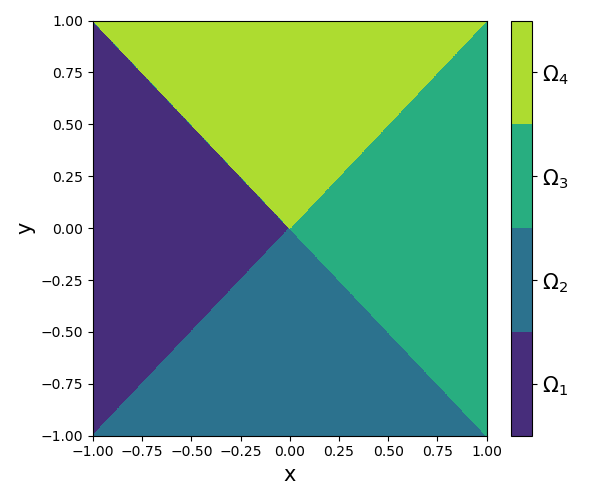}
         \caption{Active sets}
\label{ActiveSets}
\end{figure}

\begin{figure}[h!]
\centering
\begin{subfigure}[b]{0.45\textwidth}
\centering
         \includegraphics[width=\textwidth]{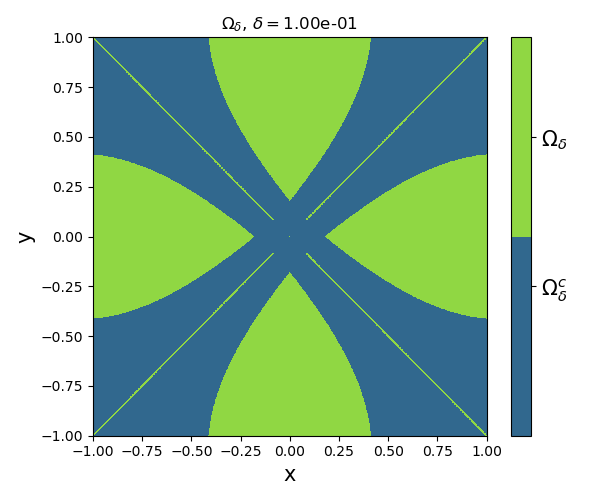}
         \caption{$\delta=10^{-1}$}
         \label{Omega4}
\end{subfigure}
\begin{subfigure}[b]{0.45\textwidth}
\centering
         \includegraphics[width=\textwidth]{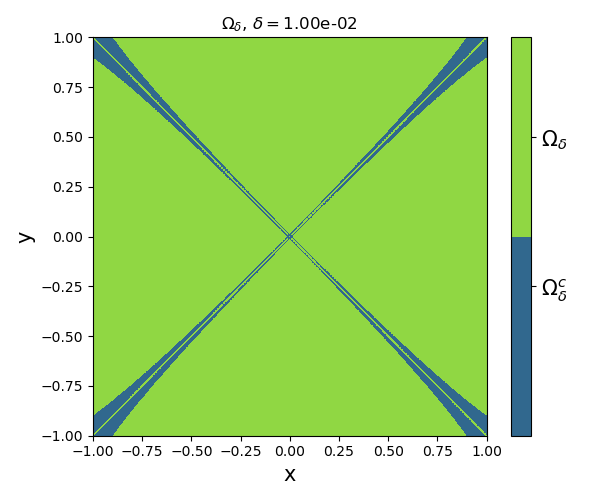}
         \caption{$\delta=10^{-2}$}
         \label{Omega3}
\end{subfigure}
\caption{$\Omega_{\delta}$ sets of $v_d$.}
\label{DeltaSets}
\end{figure}

In the remainder of this section, we use $d=2$ for the ease of the exposition, since it allows to depict the active sets and the set $\Omega_{\delta}$. In Figure \ref{ActiveSets} each active set is colored differently. From this we see that the discontinuities occurs along the diagonals of the square $[-1,1]^2$. Additionally, in Figure \ref{DeltaSets}, the set $\Omega_\delta$ is shown for $\delta\in\{10^{-2},10^{-1}\}$. As we see in \Cref{DeltaSets}, the sets $\Omega_{\delta}$ include part of the diagonals of the square $[-1,1]^2$, which is where the discontinuities of $\nabla v_{d}
$ occurs. To see this, we note that along the diagonals of $[-1,1]^2$ there is more than one active indices and $\nabla \phi_{i}\neq \nabla \phi_{j}$ for $i\neq j$. For example, if $x_{1}=x_{2}$ and $x_{1}\in[0,1]$, then $\phi_{3}(x_1,x_2)=\phi_{4}(x_1,x_2)$ and it can be verified that $\nabla v_{3}(x_{1},x_{2})\neq \nabla v_{4}(x_{1},x_{2})$.

\subsection{Setting and approximation}
\label{subsec:setting}

For $m\in\N$ we consider Chebyshev polynomials as setting, that is:
$$\Theta_m=\R^{(m+1)\times(m+1)} \mbox{ and }\xi_m(\theta)(x)=\sum_{i=1}^{m+1}\sum_{j=1}^{m+1}\theta_{i,j}T_{i-1}(x_1)T_{j-1}(x_2)\mbox{ for }x\in[-1,1]^2\mbox{ and }\theta\in\Theta_m,$$ 
where $T_{k}$ is the $k-$th Chebyshev polynomial for  $k\in\N_0$. By the results in \cite[Section 4.5.1 in Chapter 4]{Quarteroni} we see that the setting $S_{m}=(\Theta_m,\xi_m)$ satisfies \Cref{hypo:approx}. In particular, for $\phi\in C^\infty([-1,1]^2)$ we have that for coefficients $\theta_m\in\Theta_m$ obtained by interpolating $\phi$ at the Chebyshev points of degree $m$ (see \cite[Section 4.5.1 in Chapter 4]{Quarteroni})  we have 
$$ \lim_{m\to\infty}\xi_m(\theta_m)=\phi\mbox{ in }C^2([-1,1]^2).$$

Concerning the approximation of the positive part, we choose 
$$s\in\R\mapsto g_{\varepsilon,M}(s)=\left\{\begin{array}{ll}
0 &\text{if } s<0\\
\frac{s^2}{2\varepsilon} &\text{if } s\in [0,\varepsilon)\\
s-\frac{\varepsilon}{2} &\text{if } s>\varepsilon.
\end{array}\right.$$
According to Remarks \ref{rem:kk1} and \ref{rem:kk2}, $g_{\varepsilon,M}$ satisfies \eqref{hypo:positivepart}, \eqref{lemma:prob:hyp0} and \eqref{lemma:prob:hyp1}. We name the resulting approximation of the minimum $\psi_{n,\varepsilon}$ the \emph{MoreauRegMin} since it comes from the Moreau regularization of the positive part.

Defining $\theta_m=(\theta_{1,m},\ldots,\theta_{2d,m})$ with $\theta_{i,m}\in\Theta_{m}$ as the coefficients obtained by interpolating $\phi_i$ by Chebychev polynomials of total degree less or equal than $m$, for $i\in \{1,\ldots,2d\}$ we propose the following approximation for $v_{d}$:
$$v_{d,m,\varepsilon}=\varv_{2d,m,\varepsilon}(\theta_m),$$
We will refer to this approximation of $v_d$ as the \emph{MoreauRegMin approximation}.

By \eqref{Theo:errorbounds} we have
$$ \lim_{\varepsilon\to 0^+,m\to\infty}\norm{ v_{d,m,\varepsilon}-v_d}_{C(\overline{\Omega})}+\norm{\nabla v_{d,m,\varepsilon} -\nabla v_d}_{L^1(\Omega);\R^d}=0.$$
Furthermore, for $\delta>0$ and $\varepsilon\in (0,\frac{\delta}{2(2d-1)})$ fixed,  we have that 
$$\lim_{m\to\infty}\norm{v_{d,m,\varepsilon}-v_d}_{W^{1,\infty}(\Omega_\delta)}=0.$$
In particular, due the continuity of $H$ and the fact that $\varv_{2d,m,\varepsilon}(\theta_m)$ is of class $C^1$, we have that $$ 
\lim_{\varepsilon\to 0^+,m\to\infty}\norm{H(\nabla v_{d,m,\varepsilon},v_{d,m,\varepsilon})}_{C(\overline{\Omega_\delta})}=0.
$$
which in turns implies that 
$$\lim_{\varepsilon\to 0^+,m\to\infty}H(\nabla v_{d,m,\varepsilon}(x),v_{d,m,\varepsilon}(x))=0 $$ 
for all $x\in \overline{\Omega}$, since clearly 
$$\overline{\Omega}=\lim_{\tilde{\delta}\to0^+}\Omega_{\tilde{\delta}}=\bigcup_{\tilde{\delta}>0}\Omega_{\tilde{\delta}}.$$
It is noteworthy that the last three properties described above are not necessarily satisfied by a general approximation of $v_d$. For example, using the Log-Sum-Exp functions defined in \Cref{rem:kk2}, the approximation of $v_{d}$ given by 
\beq \tilde{v}_{d,m,\varepsilon}=\tilde{\psi}_{n,\varepsilon}(\xi_{\theta_{1,m}},\ldots,\xi_{\theta_{2d,m}}) 
\label{def:LogSumApprox}
\eeq
converges uniformly to $v_d$ as $\varepsilon\to 0^+$ and $m\to\infty$, it is Lipschitz and semiconcave uniformly with respect to $m\in\N$, but its gradient does not converges pointwise as is explained in \Cref{rem:kk2} and furthermore $H(\nabla \tilde{v}_{d,m,\varepsilon},\tilde{v}_{d,m,\varepsilon} )$ does not vanishes as $\varepsilon\to 0^+$ and $m\to\infty$ pointwise nor uniformly in $\Omega_\delta$ for any $\delta>0$. In particular, if we consider $x=\left(-\frac{1}{2},-\frac{1}{2}\right)$  we have that $v_{d}(x)=\phi_1(x)=\phi_2(x)=\exp\left(-\frac{5}{4}\right)$ and  
$$\nabla \phi_{1}(x)=\left(
\begin{array}{c}
3\\
1
\end{array}\right)
\frac{\exp\left(-\frac{5}{4}\right)}{2}\neq \left(
\begin{array}{c}
1 \\
3
\end{array}\right)
\frac{\exp\left(-\frac{5}{4}\right)}{2}=\nabla \phi_{2}(x),$$
choosing $\varepsilon_m=|\phi_{1,m}(x)-\phi_{2,m}(x)|^{\frac{1}{2}}$ we have 
 
$$\lim_{m\to\infty}\nabla \tilde{v}_{d,m,\varepsilon_m}(x)=\frac{1}{2}(\nabla \phi_1(x)+\nabla \phi_2(x))=\left(
\begin{array}{c}
1 \\
1
\end{array}\right)
\exp\left(-\frac{5}{4}\right),$$
which by the continuity of $H$ implies that 
$$\lim_{m\to\infty}H(\nabla \tilde{v}_{d,m,\varepsilon_m}(x),\tilde{v}_{d,m,\varepsilon_m}(x))=-\frac{1}{2}\exp\left(-5\right)<0. $$
In the next subsection, we compare numerically the convergence of $v_{d,m,\varepsilon}$ and $\tilde{v}_{d,m,\varepsilon}$ as $\varepsilon\to 0^+$ and $m\to\infty$.

\subsection{Numerical experiments}
\label{subsec:numexp}
To carry out the numerical experiments we consider different degrees of the Chebyshev interpolation and  regularization parameters $\varepsilon>0$:
$$m\in \{2,4,6,8,10\} \mbox{ and }\varepsilon\in\{10^{-4},10^{-2},10^{-1}\}.$$
We will gauge the convergence of the different approximations of $v_d$ in $\Omega_\delta$ for $\delta\in\{0,10^{-1},10^{-3}\}$, where  in the case $\delta=0$  we take that $\Omega_\delta=\overline{\Omega}$. For this, we consider a uniform grid $\mathcal{X}=\{(x_i,y_j)\}_{i,j=0}^{1000}$ of $[-1,1]^2$,  where $\{x_{i}\}_{i=0}^{1000}$ is a uniform division of $[-1,1]$. We measure the convergence using the following metrics:
$$ D_{C}(u,\delta)=\frac{1}{|\mathcal{X}\cap\Omega_\delta|}\max_{x\in\mathcal{X}\cap\Omega_\delta }|v_d(x)-u(x)|,$$
$$ D_{W1}(u,\delta)=\frac{1}{|\mathcal{X}\cap\Omega_\delta|}\sum_{x\in\mathcal{X}\cap\Omega_\delta}|\nabla v_d(x)-\nabla u(x)|,$$
$$ D_{W\infty}(u,\delta)=\max_{x\in\mathcal{X}\cap\Omega_\delta}|\nabla v_d(x)-\nabla u(x)|,$$
where $|\cdot|$ denotes the cardinality of a set as well as the $\R^n-$norm as appropriate.
Additionally, to verify the convergence of the Hamiltonian of the approxmiation we use:
$$ D_{H1}(u,\delta)=\frac{1}{|\mathcal{X}\cap\Omega_\delta|}\sum_{x\in\mathcal{X} \cap\Omega_\delta}|H(\nabla u(x),u(x))|, $$
$$ D_{H\infty}(u,\delta)=\max_{x\in \mathcal{X}\cap\Omega_\delta}|H(\nabla u(x_i,y_j),u(x))|.$$

\begin{table}[h!]
\centering
\begin{tabular}{|c|cccc||} 
 \hline
 $\delta$ & $10^{-4}$ & $10^{-3}$ & $10^{-2}$ & $10^{-1}$ \\ [0.5ex] 
 \hline
 
 $\frac{|\mathcal{X}\cap \Omega_\delta|}{|\mathcal{X}|}$ &1& 0.993& 0.918& 0.416 \\ [1ex] 
 \hline
\end{tabular}
\caption{Proportion of $\frac{|\mathcal{X}\cap \Omega_\delta|}{|\mathcal{X}|}$.}
\label{table:1}
\end{table}

\begin{figure}[h!]
\centering
\begin{subfigure}{0.45\textwidth}
\includegraphics[width=\textwidth]{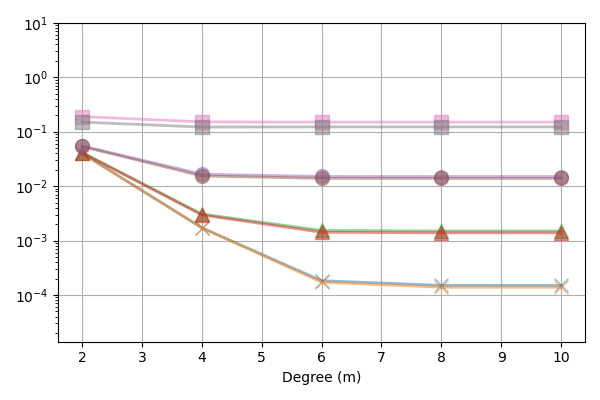}
\caption{$D_{C}(\cdot,\delta)$ error for $\delta=0$.}
\label{fig:DC0}
\end{subfigure}
\begin{subfigure}[b]{0.45\textwidth}
\includegraphics[width=\textwidth]{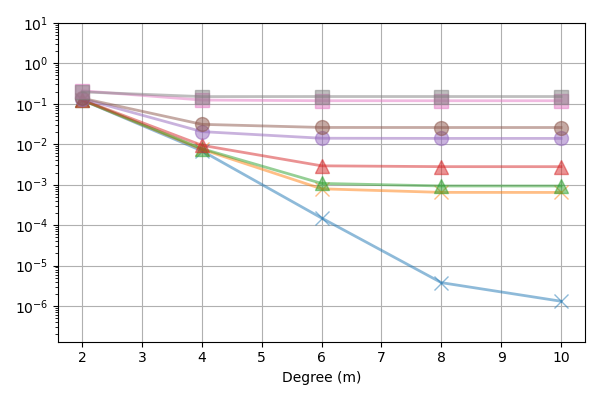}
\caption{$D_{W1}(\cdot,\delta)$ error for $\delta=0$.}
\label{fig:W10}
\end{subfigure}
\begin{subfigure}[b]{0.45\textwidth}
         \includegraphics[width=\textwidth]{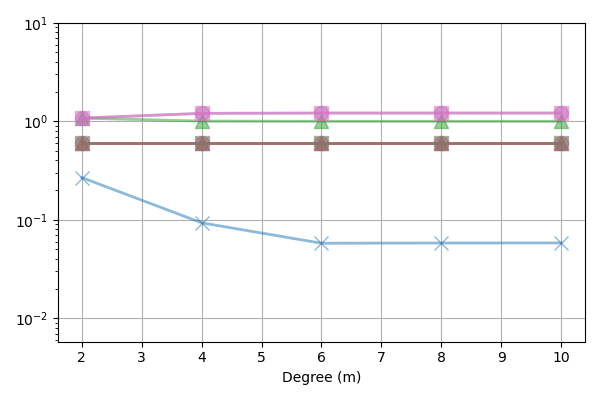}
\caption{$D_{W\infty}(\cdot,\delta)$ error for $\delta=0$.}
\label{fig:Dinfty0}
\end{subfigure}
\begin{subfigure}[b]{0.45\textwidth}
\includegraphics[width=\textwidth]{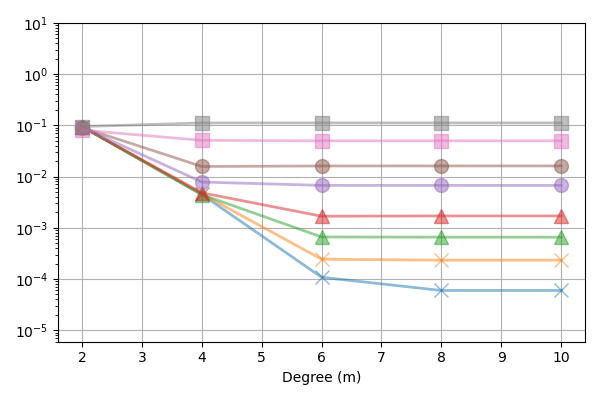}
\caption{$D_{H1}(\cdot,\delta)$ error for $\delta=0$.}
\label{fig:DH10}
\end{subfigure}
\begin{subfigure}[b]{0.45\textwidth}
\includegraphics[width=\textwidth]{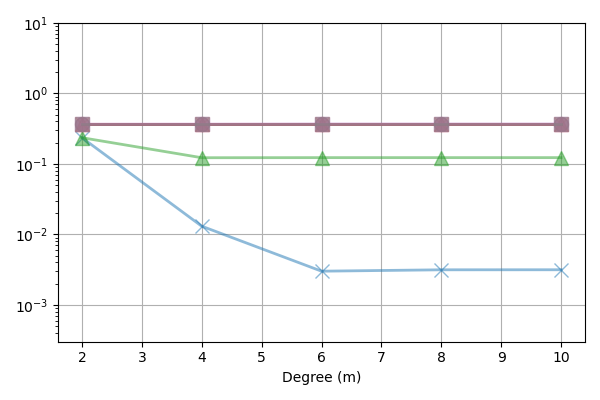}
\caption{$D_{H\infty}(\cdot,\delta)$ error for $\delta=0$.}
\label{fig:DHinf0}
\end{subfigure}
\begin{subfigure}[b]{0.45\textwidth}
         \includegraphics[width=\textwidth]{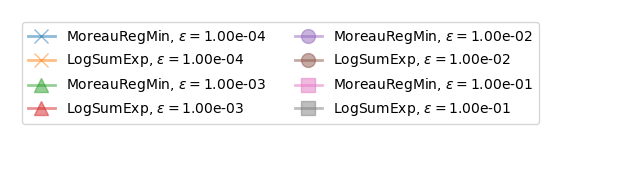}
\caption{Legend.}
\label{fig:Dinfty0}
\end{subfigure}
\caption{Error for $\delta=0$.}
\label{fig:error0}
\end{figure}

\begin{figure}
\centering
\begin{subfigure}[b]{0.45\textwidth}
         \includegraphics[width=\textwidth]{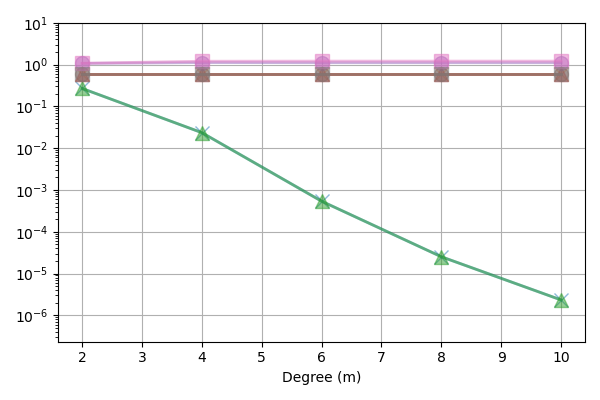}
\caption{$D_{W\infty}(\cdot,\delta)$ error for $\delta=10^{-2}$.}
\label{fig:Dinfty2}
\end{subfigure}
\begin{subfigure}[b]{0.45\textwidth}
\includegraphics[width=\textwidth]{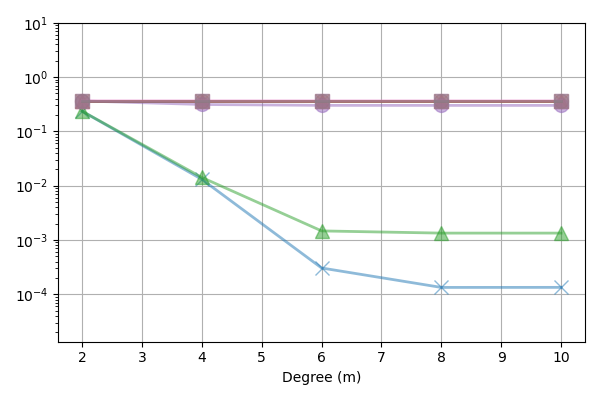}
\caption{$D_{H\infty}(\cdot,\delta)$ error for $\delta=10^{-2}$.}
\label{fig:DHinf2}
\end{subfigure}
\caption{Error for $\delta=10^{-2}$}
\label{fig:error2}
\end{figure}

\begin{figure}[h!]
\centering
\begin{subfigure}[b]{0.45\textwidth}
         \includegraphics[width=\textwidth]{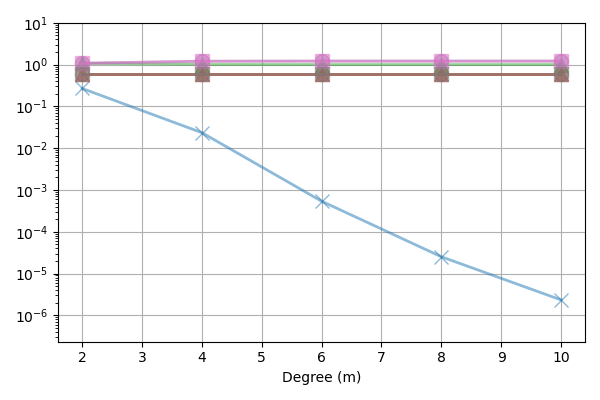}
\caption{$D_{W\infty}(\cdot,\delta)$ error for $\delta=10^{-3}$.}
\label{fig:Dinfty3}
\end{subfigure}
\begin{subfigure}[b]{0.45\textwidth}
\includegraphics[width=\textwidth]{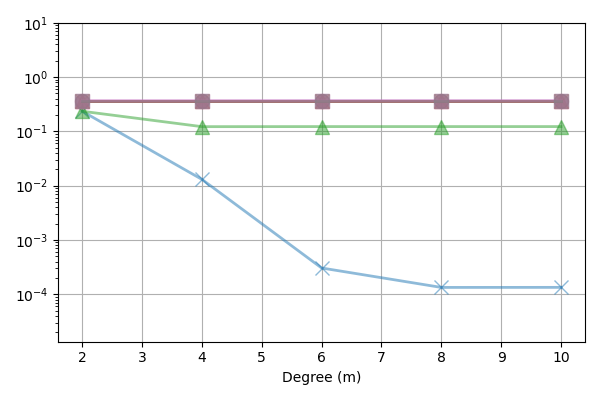}
\caption{$D_{H\infty}(\cdot,\delta)$ error for $\delta=10^{-3}$.}
\label{fig:DHinf3}
\end{subfigure}
\caption{Error for $\delta=10^{-3}$}
\label{fig:error3}
\end{figure}

The results are summarized in Table \ref{table:1}, where the proportion of the grid that intersects $\Omega_\delta$, for each considered $\delta$ is recorded, and in Figures \ref{fig:error0}-\ref{fig:error3},  where the behavior of the errors for $\delta\in\{0,10^{-3},10^{-2},10^{-1}\}$ are depicted. In the following, we discuss these results.

We observe in Figure \ref{DeltaSets} that  $\Omega_{\delta}$ covers part of the diagonals of the square. As $\delta\to 0^+$ the proportion of the diagonals covered by $\Omega_\delta$ increases and for $\delta=10^{-2}$, at least visually, it seems that this proportion  is close to 1. It is worth recalling that the discontinuities of the gradient of $v_d$ occur precisely along the diagonals of the square. In this example, even if $\delta$ is not extremely close to $0$, we can obtain a good representation of  these discontinuities.

Regarding the behavior of the errors in the case of $\delta=0$, and thus   $\Om_\delta=\overline{\Omega}$, from Figure \ref{fig:error0} we see that  the behavior of the $D_C$, $D_{W\infty}$, $D_{H\infty}$ is the same for the MoreuRegMin and the Log-Sum-Exp approximation. The main differences arise from the choice of $\varepsilon$.  In contrast, in the case of $D_{H1}$ and $D_{W1}$, independently of $\varepsilon$ and the degree, MoreuRegMin achieves a better performance in terms of $D_{W1}$. This is consistent with the fact that MoreuRegMin is able to represent in a better manner the gradient of $v_d$ and viscosity solutions of HJB equations.

For the cases $\delta=10^{-2}$ and $\delta=10^{-3}$, $D_{C}$, $D_{W1}$ and $D_{H1}$ do not differ from what was already shown in Figure \ref{fig:error0}. Thus we decided in Figures \ref{fig:error2} and \ref{fig:error3} to focus on the behavior of $D_{H\infty}$ and $D_{W\infty}$. It is noteworthy that for all $\varepsilon$ and degrees, both $D_{W\infty}$ and $D_{H\infty}$ are above $10^{-1}$ for the Log-Sum-Exp approximation, this indicates that this approximation is not able to approximate the gradient of $v_d$ uniformly. In contrast, we see in Figure \ref{fig:error2} that for $\delta=10^{-2}$, the error  $D_{W\infty}$ of the  MoreuRegMin approximation   decreases with the degree. The cases  $\varepsilon=10^{-3}$ and $\varepsilon=10^{-4}$ behave identically to each other.  As explained at the end of Section \ref{sec:approximation}, this is due to the fact that $g'_{\varepsilon,M}(0)=g'_{\varepsilon,M}(\frac{\delta}{2})=0$ if $\varepsilon<\delta$. In the case of $D_{H\infty}$, the behavior for  $\varepsilon=10^{-3}$ and $\varepsilon=10^{-4}$ is not identical because the Hamiltonian also receives as input the function itself for which the approximation error depends on $\varepsilon$. In agreement with these observations, in Figure \ref{fig:error3} for $\delta=10^{-3}$ we see that 
the only case in which the errors $D_{W\infty}$ and $D_{H\infty}$ decrease and achives values below $10^{-1}$ is when $\varepsilon=10^{-4}$ for the MoreuRegMin approximation. As in the case of $\delta=10^{-2}$ this is explained by the fact that $D_{W\infty}$ does not depend on $\varepsilon$ if $\varepsilon<\delta$.
\section{Conclusions}

\label{sec:conclusion}
In this work a smooth, semiconcavity preserving, approximation  was introduced. The proposed semiconcavity preserving approximation was devised with two components: a smooth approximation $\psi_{n,\varepsilon}$ of the minimum of $n-$ real-valued functions   and a universal approximating parametrization of $C^2$ functions. The universality of the  semiconcavity preserving approximation for semiconcave functions was proved  in the $C(\overline{\Omega})$ and $W^{1,p}(\Omega)$ norms for $p\in[1,\infty)$. Further uniform error bounds were presented for the gradient of the proposed approximation in a family of sets converging to $\Omega$,  which includes part of the dicontinuities of the gradient. Such a  result is not evident since the gradient of a semiconcave function is not necessarily continuous.

By analyzing the limiting behavior of the gradient of $\psi_{n,\varepsilon}$ as $\varepsilon\to 0^+$, the pertinence of this approximation for viscosity solutions of HJB equations was shown in Remark \ref{rem:prob:HJB}. It should be noted that although there exist other smooth approximations of the minimum function such as the Log-Sum-Exp, they do not necessarily exhibit the same behavior.

To illustrate the results of this article, in Section \ref{sec:Example} the exponential distance function was introduced. It is semiconcave and satisfies a HJB equation. By means of Chebyshev polynomials, a setting satisfying Hypothesis \ref{hypo:approx} was presented in Subsection \ref{subsec:setting}. The uniform convergence of the Hamiltonians in $\Omega_\delta$ towards 0 was proved  for this approximation, and for $\varepsilon$ small enough, the numerical results using the MoreauRegMin framework  confirm the behavior. 
 On the other hand, the Log-Sum-Exp approximation is not able to achieve the same results.

To conclude, we expect to utilize the proposed parametrization for devising new machine learning based methodologies for the resolution of HJB equations and the design of optimal feedback laws in future work. For the aforementioned task, it has been proved in \cite{KuVa2} that the semiconcavity of the approximation plays an important role for ensuring  its performance. In addition, we believe that the analysis of the parametrization developed in this article can help to understand under which conditions one can provide a parametrization which mitigates the so called curse of dimensionality.

\bibliography{biblio}

\begin{thebibliography}{29}
\providecommand{\natexlab}[1]{#1}
\providecommand{\url}[1]{\texttt{#1}}
\expandafter\ifx\csname urlstyle\endcsname\relax
  \providecommand{\doi}[1]{doi: #1}\else
  \providecommand{\doi}{doi: \begingroup \urlstyle{rm}\Url}\fi

\bibitem[Akian et~al.(2008)Akian, Gaubert, and Lakhoua]{Akian}
M.~Akian, S.~Gaubert, and A.~Lakhoua.
\newblock The max-plus finite element method for solving deterministic optimal
  control problems: Basic properties and convergence analysis.
\newblock \emph{SIAM J. Control Optim.}, 47\penalty0 (2):\penalty0 817--848,
  2008.

\bibitem[Albano(2009)]{Albano}
P.~Albano.
\newblock The singularities of the distance function near convex boundary
  points.
\newblock \emph{Nonlinear differ. equ. appl.}, 16:\penalty0 273--281, 2009.
\newblock \doi{10.1007/s00030-008-7035-y}.

\bibitem[Amos et~al.(2017)Amos, Xu, and Kolter]{AmosXuKolter}
B.~Amos, L.~Xu, and J.~Z. Kolter.
\newblock Input convex neural networks.
\newblock In D.~Precup and Y.~W. Teh, editors, \emph{Proceedings of the 34th
  International Conference on Machine Learning}, volume~70 of \emph{Proceedings
  of Machine Learning Research}, pages 146--155, New York, 06--11 Aug 2017.
  PMLR.
\newblock URL \url{https://proceedings.mlr.press/v70/amos17b.html}.

\bibitem[Bardi and Capuzzo-Dolcetta(1997)]{Bardi1997}
M.~Bardi and I.~Capuzzo-Dolcetta.
\newblock \emph{Optimal Control and Viscosity Solutions of
  Hamilton-Jacobi-Bellman Equations}.
\newblock Birkh\"{a}user, Boston, 1997.

\bibitem[Boyd and Magnani(2009)]{BoydMagnani}
S.~Boyd and A.~Magnani.
\newblock Convex piecewise-linear fitting.
\newblock \emph{Optim. Eng.}, 10:\penalty0 1--17, 2009.
\newblock \doi{10.1007/s11081-008-9045-3}.

\bibitem[Calafiore et~al.(2020)Calafiore, Gaubert, and
  Possieri]{CalafioreGaubertPossieri}
G.~C. Calafiore, S.~Gaubert, and C.~Possieri.
\newblock Log-sum-exp neural networks and posynomial models for convex and
  log-log-convex data.
\newblock \emph{IEEE Transactions on Neural Networks and Learning Systems},
  31\penalty0 (3):\penalty0 827--838, 2020.
\newblock \doi{10.1109/TNNLS.2019.2910417}.

\bibitem[Cannarsa and Sinestrari(2006)]{Cannarsa2004}
P.~Cannarsa and C.~Sinestrari.
\newblock \emph{Semiconcave functions, {Hamilton-Jacobi} equations, and optimal
  control}.
\newblock Progress in Nonlinear Differential Equations and Their Appli.
  Birkhauser, Boston, Feb. 2006.

\bibitem[Chen et~al.(2019)Chen, Shi, and Zhang]{ChenShiZhang}
Y.~Chen, Y.~Shi, and B.~Zhang.
\newblock Optimal control via neural networks: A convex approach.
\newblock In \emph{International Conference on Learning Representations}, 2019.
\newblock URL \url{https://openreview.net/forum?id=H1MW72AcK7}.

\bibitem[Darbon et~al.(2023)Darbon, Dower, and Meng]{DarbonDowerMeng}
J.~Darbon, P.~M. Dower, and T.~Meng.
\newblock Neural network architectures using min-plus algebra for solving
  certain high-dimensional optimal control problems and {Hamilton--Jacobi}
  {PDEs}.
\newblock \emph{Math. Control Signals Systems}, 35\penalty0 (1):\penalty0
  1--44, Mar. 2023.

\bibitem[Douglis(1965)]{Douglis}
A.~Douglis.
\newblock Solutions in the large for multi-dimensional non linear partial
  differential equations of first order.
\newblock \emph{Ann. I. Fourier}, 15\penalty0 (2):\penalty0 1--35, 1965.
\newblock URL \url{http://eudml.org/doc/73873}.

\bibitem[Dower et~al.(2015)Dower, McEneaney, and Zhang]{Dower}
P.~M. Dower, W.~M. McEneaney, and H.~Zhang.
\newblock Max-plus fundamental solution semigroups for optimal control
  problems.
\newblock In \emph{2015 Proceedings of the Conference on Control and its
  Applications}, pages 368–--375, 2015.

\bibitem[Evans and Gariepy(2015)]{Evans2}
L.~C. Evans and R.~Gariepy.
\newblock \emph{Measure Theory and Fine Properties of Functions}.
\newblock Chapman and Hall/CRC, New York, revised edition (1st ed.) edition,
  2015.
\newblock \doi{10.1201/b18333}.

\bibitem[Fleming(1969)]{Flemming}
W.~H. Fleming.
\newblock The cauchy problem for a nonlinear first order partial differential
  equation.
\newblock \emph{J. Differ. Equ.}, 5\penalty0 (3):\penalty0 515--530, 1969.
\newblock ISSN 0022-0396.
\newblock \doi{https://doi.org/10.1016/0022-0396(69)90091-6}.
\newblock URL
  \url{https://www.sciencedirect.com/science/article/pii/0022039669900916}.

\bibitem[Gao and Pavel(2018)]{GaoPavel}
B.~Gao and L.~Pavel.
\newblock On the properties of the softmax function with application in game
  theory and reinforcement learning, 2018.
\newblock URL \url{https://arxiv.org/abs/1704.00805}.

\bibitem[Gaubert et~al.(2011)Gaubert, McEneaney, and Qu]{Gaubert}
S.~Gaubert, W.~McEneaney, and Z.~Qu.
\newblock Curse of dimensionality reduction in max-plus based approximation
  methods: Theoretical estimates and improved pruning algorithms.
\newblock In \emph{2011 50th IEEE Conference on Decision and Control and
  European Control Conference}, pages 1054--1061, 2011.

\bibitem[G{\'e}n{\'e}rau et~al.(2022)G{\'e}n{\'e}rau, Oudet, and
  Velichkov]{Gener}
F.~G{\'e}n{\'e}rau, {\'E}.~Oudet, and B.~Velichkov.
\newblock Cut locus on compact manifolds and uniform semiconcavity estimates
  for a variational inequality.
\newblock \emph{Archive for Rational Mechanics and Analysis}, 246\penalty0
  (2--3):\penalty0 561--602, 2022.
\newblock \doi{10.1007/s00205-022-01821-0}.

\bibitem[Ghosh et~al.(2020)Ghosh, Pananjady, Guntuboyina, and
  Ramchandran]{GhoshPananjady}
A.~Ghosh, A.~Pananjady, A.~Guntuboyina, and K.~Ramchandran.
\newblock Max-affine regression with universal parameter estimation for
  small-ball designs.
\newblock In \emph{2020 IEEE International Symposium on Information Theory
  (ISIT)}, pages 2706--2710, 2020.
\newblock \doi{10.1109/ISIT44484.2020.9174116}.

\bibitem[Goujon et~al.(2024)Goujon, Neumayer, and Unser]{Goujon}
A.~Goujon, S.~Neumayer, and M.~Unser.
\newblock Learning weakly convex regularizers for convergent
  image-reconstruction algorithms.
\newblock \emph{SIAM J. Imaging Sci.}, 17\penalty0 (1):\penalty0 91--115, 2024.
\newblock \doi{10.1137/23M1565243}.
\newblock URL \url{https://doi.org/10.1137/23M1565243}.

\bibitem[Hanin(2019)]{Hanin}
B.~Hanin.
\newblock Universal function approximation by deep neural nets with bounded
  width and relu activations.
\newblock \emph{Mathematics}, 7\penalty0 (10), 2019.
\newblock ISSN 2227-7390.
\newblock \doi{10.3390/math7100992}.
\newblock URL \url{https://www.mdpi.com/2227-7390/7/10/992}.

\bibitem[Kružkov(1975)]{Kruzkov}
S.~N. Kružkov.
\newblock Generalized solutions of the hamilton-jacobi equations of eikonal
  type. i. formulation of the problems; existence, uniqueness and stability
  theorems; some properties of the solutions.
\newblock \emph{Math. USSR-Sb.}, 27\penalty0 (3):\penalty0 406, apr 1975.
\newblock \doi{10.1070/SM1975v027n03ABEH002522}.
\newblock URL \url{https://dx.doi.org/10.1070/SM1975v027n03ABEH002522}.

\bibitem[Kunisch and V\'asquez-Varas(2025)]{KuVa3}
K.~Kunisch and D.~V\'asquez-Varas.
\newblock Convergence of machine learning methods for feedback control laws:
  averaged feedback learning scheme and data driven methods, 2025.
\newblock URL \url{https://arxiv.org/abs/2407.18403}.

\bibitem[Kunisch and Vásquez-Varas(2024)]{KuVa2}
K.~Kunisch and D.~Vásquez-Varas.
\newblock Consistent smooth approximation of feedback laws for infinite horizon
  control problems with non-smooth value functions.
\newblock \emph{J. Differ. Equ.}, 411:\penalty0 438--477, 2024.
\newblock ISSN 0022--0396.
\newblock \doi{https://doi.org/10.1016/j.jde.2024.08.010}.
\newblock URL
  \url{https://www.sciencedirect.com/science/article/pii/S0022039624004923}.

\bibitem[Mäkelä and Neittaanmäki(1992)]{Makela}
M.~Mäkelä and P.~Neittaanmäki.
\newblock \emph{Nonsmooth Optimization}.
\newblock World Scientific, New Jersey, 1992.
\newblock \doi{10.1142/1493}.
\newblock URL \url{https://www.worldscientific.com/doi/abs/10.1142/1493}.

\bibitem[Quarteroni and Valli(2008)]{Quarteroni}
A.~Quarteroni and A.~Valli.
\newblock \emph{Numerical approximation of partial differential equations}.
\newblock Springer series in computational mathematics. Springer, Berlin,
  Germany, 1 edition, Sept. 2008.

\bibitem[Rifford(2000)]{Rifford}
L.~Rifford.
\newblock Existence of {L}ipschitz and semiconcave control-{L}yapunov
  functions.
\newblock \emph{SIAM J. Control Optim.}, 39\penalty0 (4):\penalty0 1043--1064,
  2000.
\newblock \doi{10.1137/S0363012999356039}.

\bibitem[Rifford(2002)]{Rifford2}
L.~Rifford.
\newblock Semiconcave control-{L}yapunov functions and stabilizing feedbacks.
\newblock \emph{SIAM J. Control Optim.}, 41\penalty0 (3):\penalty0 659--681,
  2002.
\newblock \doi{10.1137/S0363012900375342}.

\bibitem[Santambrogio(2015)]{santam}
F.~Santambrogio.
\newblock \emph{Optimal Transport for Applied Mathematicians: Calculus of
  Variations, PDEs, and Modeling}.
\newblock Birkh{\"a}user, Cham, 2015.
\newblock ISBN 978-3-319-20828-2.
\newblock \doi{10.1007/978-3-319-20828-2}.

\bibitem[Warin(2024)]{Warin}
X.~Warin.
\newblock The groupmax neural network approximation of convex functions.
\newblock \emph{IEEE Transactions on Neural Networks and Learning Systems},
  35\penalty0 (8):\penalty0 11608--11612, 2024.
\newblock \doi{10.1109/TNNLS.2023.3240183}.

\bibitem[Weaver(1999)]{Weaver}
N.~Weaver.
\newblock \emph{Lipschitz Algebras}.
\newblock WORLD SCIENTIFIC, 2nd edition, 1999.
\newblock \doi{10.1142/4100}.
\newblock URL \url{https://www.worldscientific.com/doi/abs/10.1142/4100}.

\end{thebibliography}
\end{document}